\newtheorem{proposition}{Proposition}
\newtheorem{corollary}[proposition]{Corollary}
\newtheorem{lemma}[proposition]{Lemma}
\newtheorem{theorem}[proposition]{Theorem}
\newtheorem*{conjecture*}{Conjecture}
\newtheorem*{theorem*}{Theorem}
\newtheorem*{corollary*}{Corollary}
\newtheorem*{proposition*}{Proposition}
\newtheorem*{lemma*}{Lemma}
\theoremstyle{definition}
\newtheorem{definition}[proposition]{Definition}
\newtheorem{construction}[proposition]{Construction}
\newtheorem*{definition*}{Definition}
\newtheorem*{construction*}{Construction}
\newtheorem{remark}[proposition]{Remark}
\newtheorem*{remark*}{Remark}
\newtheorem{example}[proposition]{Example}
\newtheorem*{example*}{Example}
\newcommand{\Z}{\mathbb{Z}}
\newcommand{\N}{\mathbb{N}}
\newcommand{\F}{\mathbb{F}}
\def\E{\mathbb{E}}
\let\scr=\mathcal
\newcommand{\1}{\mathbbm{1}}
\DeclareMathOperator*{\colim}{colim}
\let\lim=\relax
\DeclareMathOperator*{\lim}{lim}
\def\Map{\mathrm{Map}}
\def\map{\mathrm{map}}
\def\CAlg{\mathrm{CAlg}}
\def\Ind{\mathrm{Ind}}
\def\Spc{\mathcal{S}\mathrm{pc}{}}
\def\Fin{\cat F\mathrm{in}}
\def\Fun{\mathrm{Fun}}
\newcommand{\Spec}{\mathrm{Spec}}
\newcommand{\iso}{\cong}
\newcommand{\wequi}{\iso}
\newcommand{\Mod}{\text{-}\mathcal{M}\mathrm{od}}
\newcommand{\heart}{\heartsuit}
\newcommand{\Hom}{\operatorname{Hom}}
\def\op{\mathrm{op}}
\let\cat=\mathrm
\def\ph{\mathord-}
\def\poly{\mathrm{poly}}
\def\cCAlg{\mathrm{cCAlg}}
\newtheorem{warning}[proposition]{Warning}
\renewcommand{\Mod}{\mathcal{M}\mathrm{od}}
\def\Pro{\mathrm{Pro}}
\newcommand{\PolyFun}{{\scr P\mathrm{oly}}}
\newcommand{\PolyFund}[1]{{\scr P\mathrm{oly}^{\leq #1}}}
\newcommand{\Sp}{\mathcal{S}\mathrm{p}}
\newcommand{\Set}{\mathcal{S}\mathrm{et}}
\newcommand{\ainj}{\mathrm{ainj}}
\newcommand{\ev}{\mathrm{ev}}
\def\Ss{\mathbb{S}}
\def\Id{\mathrm{Id}}
\def\DD{\mathcal{D}}
\def\one{\mathbbm{1}}
\def\img{\mathrm{Img}}
\def\coker{\mathrm{Coker}}
\def\nil{\mathrm{nil}}
\def\solvable{solvable}
\def\VectFk{\Mod_k^{\varphi}}
\renewcommand{\todo}[1]{}
\newcommand{\NB}[1]{}
\newcommand{\NB}[1]{\todo[color=gray!40]{#1}}
\numberwithin{proposition}{section}
\title{$\mathbb{E}_\infty$-coalgebras and $p$-adic homotopy theory}
\date{\today}
\author{Tom Bachmann}
\address{Mathematisches Institut, JGU Mainz, Germany}
\email{tom.bachmann@zoho.com}
\author{Robert Burklund}
\address{Department of Mathematical Sciences, University of Copenhagen, Denmark}
\email{rb@math.ku.dk}
\begin{document}

\maketitle

\begin{abstract}
We show that for any separably closed field $k$ of characteristic $p>0$, the canonical functor from nilpotent $p$-adic spaces to $\E_\infty$-coalgebras over $k$ (given by singular chains with coefficients in $k$) is fully faithful.
We also identify the essential image of simply connected spaces inside coalgebras.
This dualizes and removes finiteness assumptions from a theorem of Mandell.
\end{abstract}

\setcounter{tocdepth}{1}
\tableofcontents

\section{Introduction}

Beginning with the work of Quillen on rational homotopy theory \cite{QuillenRational} homotopy theorists have sought to construct algebraic models for as large a subcategory of the homotopy category of spaces as possible. The story begins with Quillen's pair of models for rational, simply connected spaces as either DG-Lie algebras or DG-coalgebras. Sullivan then constructed a dual model for rational, simply connected, finite type spaces as a DG-algebra of rational cochains \cite{DGMS}. In the $p$-adic setting Mandell provided a model for $p$-complete, nilpotent spaces with finite type $\F_p$-homology based on their $\E_\infty$-algebra of $\overline{\F}_p$-cochains \cite{mandell-e-infty}. Removing the finiteness hypothesis, but working with the much more restricted class of $v_n$-periodic spaces Heuts constructed a model of $v_n$-periodic spaces as Lie algebras in $T(n)$-local spectra \cite{Heutsvn}. 
Most recently, Yuan has given a model for simply connected, finite spaces in terms of their $\E_\infty$-algebra of spherical cochains together with a Frobenius trivialization \cite{AllenThesis}.

Roughly speaking, the existing algebraic models for homotopy theory can be divided into two groups: cochain based models where a strong finiteness assumption is necessary and Lie algebra/coalgebra models where no such hypothesis is necessary. Cochains being the linear dual of chains it is natural, even expected, that some finiteness hypothesis is necessary in any approach based on cochains. On the other hand, while Lie algebra based models have been very successful when working at a fixed chromatic height, the fact that the $\F_p$ cochain algebra of a finite space is formally etale contravenes any hope for a full $p$-adic Lie algebra model. 
In this paper we show that the naive idea of simply working with chain coalgebras (as opposed to cochain algebras) is sufficient to obtain an algebraic model for $p$-adic homotopy theory without finiteness hypotheses.


\begin{definition}
  A space $X$ is $p$-complete if it is local with respect to the functor $\F_p \otimes \Sigma_+^\infty (-)$.
  We write $\Spc_p \subseteq \Spc$ for the subcategory of $p$-complete spaces.
  A space $X$ is nilpotent if its fundamental group is nilpotent and acts nilpotently on the higher homotopy groups (for every choice of base-point).\footnote{Note that for us nilpotent does not require connected.}
  We write $\Spc^{\nil} \subseteq \Spc$ for the subcatgegory of nilpotent spaces.
  We let $\Spc_p^{\nil} \coloneqq \Spc_p \cap \Spc^{\nil}$.
\end{definition}

Let $k$ be a field of characteristic $p$.
Equipping $\Spc$ with its cartesian symmetric monoidal structure, by the K\"unneth theorem the $k$-chains functor
\[ k[-]: \Spc \to \Mod_k \]
is symmetric monoidal.
As $\Spc$ is \emph{cartesian} symmetric monoidal, the forgetful functor
$\cCAlg(\Spc) \to \Spc$ is an equivalence \cite[Corollary 2.4.3.10]{HA},
therefore we obtain a refinement 
\[ k[-]: \cCAlg(\Spc) \to \cCAlg(\Mod_k). \]
of the $k$-chains functor landing in $\E_\infty$-$k$-coalgebras.

\begin{theorem}[Fully faithfulness theorem, see Theorem \ref{thm:main}] \label{thm:chains-ff}
  Let $k$ be a separably closed field of characteristic $p > 0$.
  The chains functor restricted to $p$-complete, nilpotent spaces
  \[ k[-] \colon \Spc_p^{\nil} \to \cCAlg_k \]
  is fully faithful.
\end{theorem}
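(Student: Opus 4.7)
The plan is to leverage the left adjoint structure. Since $k[-]\colon \Spc \to \Mod_k$ is a symmetric monoidal colimit-preserving functor and colimits of $\E_\infty$-coalgebras are created by the forgetful functor to $\Mod_k$, the induced functor $k[-]\colon \Spc \to \cCAlg_k$ preserves colimits, and between presentable $\infty$-categories admits a right adjoint $U$; concretely, $U(C) \simeq \Map_{\cCAlg_k}(k, C)$ is the space of derived group-like elements of $C$. Fully faithfulness of $k[-]$ on $\Spc_p^{\nil}$ then reduces to verifying that the unit $\eta_X \colon X \to U(k[X])$ is an equivalence for every $X \in \Spc_p^{\nil}$.

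I would first reduce to Eilenberg--MacLane spaces via a Postnikov argument. Both the identity and $U \circ k[-]$ are continuous in $X$, so a principal fibration sequence $K(A, n) \to X_n \to X_{n-1}$ propagates fully faithfulness from $X_{n-1}$ and $K(A,m)$ (for various $m$) to $X_n$, provided $k[-]$ sends such a fibration to a pullback square of coalgebras\,---\,this is a K\"unneth-type input that must be verified. Disjoint unions and a Milnor-type argument writing $A = \lim_r A/p^r$ for $p$-complete abelian $A$ then reduce the whole theorem to the single class of checks that $\eta$ is an equivalence on $K(\F_p, n)$ for all $n \geq 1$.

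For $K(\F_p, n)$, the strategy is to write it as a filtered colimit of finite CW-subcomplexes $X_m$ and invoke Mandell's theorem on their $p$-completions. Since $k[-]$ preserves filtered colimits and $\Map_{\cCAlg_k}(-, k[Y])$ turns them into limits, both sides of $\eta$ become inverse limits of the analogous expressions for $(X_m)_p^{\wedge}$. Each $(X_m)_p^{\wedge}$ is a finite-type nilpotent $p$-complete space on which the coalgebra statement is dual to the main result of \cite{mandell-e-infty}: dualizability of $k[(X_m)_p^{\wedge}]$ in $\Mod_k$ yields a natural equivalence
\[
\Map_{\cCAlg_k}(k[(X_m)_p^{\wedge}], k[Y]) \simeq \Map_{\CAlg_k}(k^Y, k^{(X_m)_p^{\wedge}}) \simeq \Map((X_m)_p^{\wedge}, Y)
\]
when $k$ is separably closed.

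The hard part is precisely the passage from finite-type to arbitrary $p$-complete nilpotent spaces\,---\,both in the $K(\F_p,n)$ step just described and in the earlier reduction along Postnikov towers with fibers $K(A,n)$ for non-finite $p$-complete $A$. It is exactly here that the coalgebraic perspective pays off: because $k[-]$ is a left adjoint it is continuous for all the relevant colimits of spaces, and because $\Map_{\cCAlg_k}(-,-)$ turns such colimits into limits, one can patch the finite-type Mandell statement into a finiteness-free one on the coalgebra side. By contrast the cochain functor $k^{(-)}$ used in Mandell's original approach does not enjoy this continuity, which is why the finite-type hypothesis is essential there but can be removed here.
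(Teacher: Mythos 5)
Your framework (pass to the unit of the adjunction, reduce to Eilenberg--MacLane spaces, decompose the contravariant variable along filtered colimits) runs parallel to the paper's, but the one step where something nontrivial is actually proved --- the appeal to Mandell via dualization --- fails, and it fails at exactly the point the paper exists to overcome. First, the claimed equivalence $\Map_{\cCAlg_k}(k[(X_m)_p^{\wedge}], k[Y]) \simeq \Map_{\CAlg_k}(k^Y, k^{(X_m)_p^{\wedge}})$ is not a consequence of dualizability of $k[(X_m)_p^{\wedge}]$ alone: linear duality $(\cCAlg_k)^{\op} \to \CAlg_k$ is not fully faithful when the \emph{other} argument fails to be dualizable. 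The paper's own introductory remark is a counterexample: take $X_m = *$ and $Y = S$ an infinite discrete set (which is $p$-complete and nilpotent); then $\Map_{\cCAlg_k}(k, k[S]) \simeq S$ (by Proposition \ref{prop:coprod-coalg}), whereas $\Map_{\CAlg_k}(k^S, k)$ is the Stone--Čech compactification $\beta S$. Second, even granting a bridge to algebras, Mandell's theorem carries its finiteness hypothesis on the wrong variable for your purposes: it is the \emph{target} of the space maps (your $Y$, the source $k^Y$ of the algebra maps) that must be nilpotent, $p$-complete and of finite type. Writing $K(\F_p,n)$ as a filtered colimit of finite complexes decomposes only the source $X$; the variable $Y$ carrying the needed finiteness is never decomposed, and removing finiteness on $Y$ is precisely what the theorem being proved is about. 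The statement you attribute to Mandell for the pair $((X_m)_p^{\wedge}, Y)$ is thus a special case of the theorem under proof --- the argument is circular.

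What is missing is the actual hard content of the paper: the proof that the Artin--Schreier square \eqref{eq:key-pullback} is a pullback in $\cCAlg_k$ for an \emph{arbitrary} $\F_p$-vector space $V$, equivalently that maps of coalgebras into $k[K(V,n)]$ are computed by an Artin--Schreier fiber sequence with no hypothesis on the source. For $V$ finite dimensional this can indeed be extracted from Mandell by dualizing, but only after both sides are placed in pro-(compact) objects, where duality is an equivalence (Lemmas \ref{lemm:mandell} and \ref{lem:finite-case}); the passage to infinite $V$ is then not a formal colimit argument, because cofree coalgebras and limits of coalgebras do not commute with the relevant colimits. The paper handles this with pro-objects, polynomiality of the dependence on $V$, and pro-constancy via noetherian/artinian properties of polynomial functor categories (\S\ref{set:polyfun}--\S\ref{sec:key-lemma}); your proposal has no counterpart for any of this. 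A secondary, also genuine, issue: your Postnikov reduction with layers $K(A,n)$ and $A = \lim_r A/p^r$ runs into the problem flagged in the paper's introduction --- homotopy groups of general $p$-complete nilpotent spaces are Ext-$p$-complete groups admitting no reasonable classification, and need not coincide with $\lim_r A/p^r$ --- which is why the paper replaces the Postnikov induction by the cosimplicial $\F_p$-Adams resolution in Step (3) of the proof of Theorem \ref{thm:main}.
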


After further restricting to simply connected $p$-complete spaces we are also able to identify the essential image of $k[-]$.
Let $\varphi : \Spec(k) \to \Spec(k)$ be the Frobenius endormorphism.
If $E \in \cCAlg_k$, then the $k$-vector spaces $\pi_i(E)$ acquire a Frobenius operation $\mathfrak{F}: \pi_i(E) \to \varphi^*\pi_i(E)$.
We say that $\pi_i(E)$ is \emph{\solvable} if the $k$-linear span of $\{ v \in \pi_i(E) \ |\ \mathfrak{F}(v) = 1 \otimes v \}$ is all of $\pi_i(E)$.

\begin{theorem}[Characterization theorem, see Theorem \ref{thm:charact}] \label{thm:chains-charct}
  Let $k$ be a separably closed field of characteristic $p > 0$, and $E \in \cCAlg_k$.
  There exists a simply connected, $p$-complete space $X$ with $k[X] \wequi E \in \cCAlg_k$ if and only if the following conditions hold:
  \begin{enumerate}
  \item $E$ is connective,
  \item $E$ is simply connected in the sense that $\pi_0(E) \cong k$ and $\pi_1(E) \cong 0$ and 
  \item $\pi_i(E)$ is \solvable{} for all $i \geq 2$.
  \end{enumerate}
\end{theorem}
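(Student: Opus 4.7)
The necessity of conditions (1)--(3) is straightforward: for $X$ a simply connected $p$-complete space, the identification $k[X] \simeq k \otimes_{\F_p} \F_p[X]$ in $\cCAlg_k$ yields $\pi_i(k[X]) \simeq k \otimes_{\F_p} H_i(X;\F_p)$. This is manifestly connective with $\pi_0 = k$ and, by simple connectivity and Hurewicz, $\pi_1 = 0$. For solvability, elements of $H_i(X;\F_p)$ sit inside $\pi_i(k[X])$ as Frobenius-fixed vectors and $k$-span.

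For the sufficiency direction I would construct $X$ as the limit of a Postnikov-style tower of simply connected $p$-complete spaces $X_n$ with $k[X_n] \simeq \tau_{\leq n} E$ in $\cCAlg_k$. The base case is $X_1 = *$, matched with $\tau_{\leq 1} E \simeq k$ by condition (2). For the inductive step, given $X_n$ with $k[X_n] \simeq \tau_{\leq n} E$, consider the coalgebra-level Postnikov fibration
\[ \tau_{\leq n+1} E \to \tau_{\leq n} E \to F_n, \]
where $F_n$ is the ``Eilenberg--MacLane'' coalgebra classifying the extension by $\pi_{n+1}(E)$. Here condition (3) does its essential work: since $\pi_{n+1}(E) \simeq k \otimes_{\F_p} V_{n+1}$ for the $\F_p$-vector space $V_{n+1} := \pi_{n+1}(E)^{\mathfrak{F}=1}$, one identifies $F_n \simeq k[K(V_{n+1},\, n+2)]$, with $K(V_{n+1},\,n+2)$ a simply connected $p$-complete nilpotent space. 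Theorem \ref{thm:chains-ff} then lifts the classifying map in $\cCAlg_k$ uniquely to a space map $X_n \to K(V_{n+1},\,n+2)$, whose fiber $X_{n+1}$ is again simply connected $p$-complete, and by construction $k[X_{n+1}] \simeq \tau_{\leq n+1} E$.

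Setting $X := \lim_n X_n$ (in $\Spc_p^{\nil}$) and invoking connectivity of $E$ together with compatibility of $k[-]$ with the relevant convergent tower, one obtains $k[X] \simeq \lim_n \tau_{\leq n} E \simeq E$, and $X$ inherits simple connectivity and $p$-completeness from the tower.

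The main obstacle is the inductive step, which splits into two linked problems. The first is identifying the Postnikov fiber $F_n$ in $\cCAlg_k$ with $k[K(V_{n+1},\,n+2)]$; coalgebra extensions are subtler than module or algebra extensions, and this identification requires a robust description of truncation and of ``trivial square-zero'' coalgebra extensions in $\cCAlg_k$, together with a recognition principle for the $k$-chains of Eilenberg--MacLane spaces attached to $\F_p$-vector spaces. The second is lifting the classifying map via Theorem \ref{thm:chains-ff}; this uses crucially that the target lies in the domain $\Spc_p^{\nil}$ of the fully faithful functor, which is exactly what condition (3) guarantees by descending the coefficients back to $\F_p$.
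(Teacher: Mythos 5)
Your necessity argument is essentially the paper's (solvability of $\pi_i(k[X])$ is \Cref{cor:pi-star-solvable}, via the fact that the bottom Steenrod/Dyer--Lashof operation acts as the identity). The sufficiency half, however, has a genuine gap at its core: it presupposes a Postnikov/$k$-invariant theory for $\E_\infty$-coalgebras that neither the paper nor the existing literature provides, and which your proposal does not construct. Two concrete problems. First, $\tau_{\leq n} E$ is not naturally an object of $\cCAlg_k$, and the maps in your tower do not exist as coalgebra maps: truncation is not colax symmetric monoidal, and the comultiplication does not descend along $E \to \tau_{\leq n}E$. For instance, take $E = k[\C\P^\infty]$, with homology basis $\beta_i$ in degree $2i$ and $\Delta(\beta_2) = \beta_2 \otimes \beta_0 + \beta_1 \otimes \beta_1 + \beta_0 \otimes \beta_2$; any coalgebra structure on $\tau_{\leq 2}E$ for which the projection $\mathrm{pr}$ is a coalgebra map would force $0 = \Delta(\mathrm{pr}\,\beta_2) = (\mathrm{pr}\otimes\mathrm{pr})\Delta(\beta_2) = \beta_1 \otimes \beta_1 \neq 0$. (At the level of graded homotopy the truncation is a \emph{sub}coalgebra, not a quotient, so even the direction of your tower maps is wrong.) Second, the identification $F_n \simeq k[K(V_{n+1},n+2)]$ cannot hold if $F_n$ is literally a fiber or cofiber of $\tau_{\leq n+1}E \to \tau_{\leq n}E$: such an object has homotopy concentrated in a single degree, whereas the mod-$p$ homology of an Eilenberg--MacLane space is nonzero in infinitely many degrees. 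What your argument actually needs is a $k$-invariant, i.e.\ a coalgebra map $\tau_{\leq n}E \to k[K(V_{n+1},n+2)]$ whose cotensor fiber recovers $\tau_{\leq n+1}E$; producing this is essentially as hard as the theorem itself, and it is exactly the step you flag as ``the main obstacle'' without filling it.

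The paper's proof takes a different, workable route that never truncates a coalgebra: a descending Adams-type resolution built from maps \emph{out of} $E$. The Artin--Schreier pullback square yields \Cref{lemm:spacelike-homology-maps}, $\pi_0\Map_{\cCAlg_k}(E, k[K(V,i)]) \cong \Hom_{\VectFk}(\pi_i(E), k \otimes V)$ when $\pi_i(E)$ and $\pi_{i-1}(E)$ are solvable; this is where condition (3) enters, and crucially it lets one map $E$ into chains on spaces \emph{before} knowing $E$ is realizable. One sets $X_0 = \prod_{i \geq 2} K(\pi_i(E)^0, i)$, obtaining $E \to k[X_0]$ injective on homotopy; inductively, the cofiber $C$ of $E \to k[X_n]$ in $\Mod_k$ has solvable homotopy, one chooses a generalized Eilenberg--MacLane space $C_0$ with $C \to k[C_0]$ injective on homotopy, and sets $X_{n+1} = \mathrm{fib}(X_n \to C_0)$, the space-level map being supplied by the fully faithfulness theorem (\Cref{thm:main}). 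The images of $H_*(X_{n+1},k) \to H_*(X_n,k)$ are pinched against the image of $\pi_*E$, so $\pi_*E \to H_*(X_\bullet,k)$ is a pro-isomorphism, and Bousfield convergence (\Cref{thm:bousfield-convergence}) then gives $E \simeq k[\lim_i X_i]$. So where you build \emph{up} through truncations of $E$ (requiring unavailable coalgebraic Postnikov theory), the paper resolves \emph{down} onto $E$ through spaces with excess homology, which only requires mapping out of $E$ into chains on Eilenberg--MacLane spaces.
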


Our result is closely related to Mandell's work \cite{mandell-e-infty}.
He proves that the \emph{cochains functor} \[ k^{(-)}: \Spc^\op \to \CAlg_k \] is fully faithful, when restricted to $p$-complete, nilpotent spaces with degreewise finite dimensional $\F_p$-homology.
The conjecture that it should be possible to remove the finiteness assumptions from Mandell's theorem by working with coalgebras instead of algebras appears to be folklore. 


\begin{remark}
  Amplifying the claim from the beginning of this section about the necessity of a finiteness assumption when working with cochains we consider the following example.
  Let $S$ be a set. Then $\Map_{\CAlg_k}(k^S, k)$ is just the set of maps of ordinary $k$-algebras from $k^S$ to $k$.
  This in turn may be identified with the Stone--Čech compactification of $S$ and can therefore be identified with $S$ exactly when $S$ is finite.
\end{remark}

Our proof proceeds along similar lines as Mandell's proof.
We first use an unstable $\F_p$-Adams resolution to reduce to the case of an Eilenberg--Maclane space corresponding to an $\F_p$-vector space.\footnote{Mandell uses induction on a principalized Postnikov tower with layers $\F_p$ or $\Z_p$, and treats these two cases separately. In absence of finiteness conditions there seems to be no reasonable classification of ($p$-complete) $\Z_p$-modules (which appear in the layers of a general nilpotent $p$-adic space), so this approach cannot quite work in our setting.}
Mandell then provides an explicit model for the cochains on $K(\F_p,n)$ as a pushout of free $\E_\infty$-algebras and works his way forward from there.
We refer to this pushout square as the \emph{Artin--Schreier square}.
For us the key point is then to construct a corresponding Artin--Schreier pullback square expressing the chains on $K(V,n)$ as a pullback of cofree $\E_\infty$-coalgebras. The difficulty here is that in order to remove the finiteness restriction we must allow $V$ to range over all $\F_p$-vector spaces (including, crucially, non-finite-dimensional $V$). Our proof proceeds in three steps:
\begin{enumerate}
\item We use the dual of Mandell's arguments to give a pro-object version of the Artin--Schreier square for finite dimensional $V$.
\item Analyzing the way that the pro-objects from step (1) vary with $V$ we observe that their dependence on $V$ is \emph{polynomial}.
\item Using polynomiality we show that all pro-objects we have encountered have pro-constant homotopy groups.
  Pro-constancy allows us to materialize without issue and obtain the Artin--Schreier pullback square.
\end{enumerate}
It is in steps (2) and (3) that our work diverges from that of Mandell. 



\subsection*{Outline}
We begin in \S\ref{sec:coalg} by defining coalgebras recalling some of their basic properties.
Then in \S\ref{sec:frob} we construct a family of maps which we call \emph{coalgebra Frobenii}.
Using these maps we construct the Artin--Schreier squares.

The bulk of the paper is then devoted to proving that these Artin--Schreier squares are cartesian.
To this end, we need some preparations.
In \S\ref{set:polyfun} we introduce the notion of polynomial functors on finite pointed sets and almost-injective maps, and establish some basic properties.
In \S\ref{sec:pro} we recall facts about categories of pro-objects, and show how they can be used to overcome certain technical difficulties regarding coalgebras.
The final preparatory section \S\ref{sec:idem} establishes some facts about localization of coalgebras at idempotents.
In \S\ref{sec:key-lemma} we deliver the proof described above, proving that the Artin--Schreier squares are cartesian.
Finally, in the more straightforward \S\ref{sec:main} we establish our main theorems as corollaries of the results of \S\ref{sec:key-lemma}.


\subsection*{Acknowledgements}
We would like to thank Shaul Barkan, Mike Hopkins, Marc Hoyois, Florian Riedel and Jan Steinebrunner for helpful comments.

We also gratefully acknowledge the immense debt our work pays to Mandell's article \cite{mandell-e-infty}.
Our use of Artin--Schreier squares is entirely dual to his.
Attempting to construct and prove cartesian these squares was the main idea of our article.

The authors acknowledge support by Deutsche Forschungsgemeinschaft (DFG, German Research Foundation) through the Collaborative Research Centre TRR 326 \textit{Geometry and Arithmetic of Uniformized Structures}, project number 444845124.
During the course of this work the second author was supported by NSF grant DMS-2202992 and by the DNRF through the Copenhagen center for Geometry and Topology (DNRF151).

\subsection*{Notation and Conventions}

\begin{enumerate}
\item Category means $\infty$-category.
\item Given a symmetric monoidal category $\scr C$, we denote by
  \[ \cCAlg(\scr C) \coloneqq \CAlg(\scr C^\op)^\op \]
  its category of cocommutative coalgebras.
\item We say that a stable, symmetric monoidal category is stably symmetric monoidal if the tensor product commutes with finite (co)limits separately in each variable.
\item We write $\Spc$ for the category of spaces, $\Spc_*$ for the category of pointed spaces and use superscripts $\mathrm{fd}$ and $\mathrm{fin}$ for the subcategories of finite dimensional and finite spaces respectively.
\item We write $\Set_*^{\ainj}$ (respectively $\Fin_*^{\ainj}$) for the categories of (finite) pointed sets and almost-injective maps (see \Cref{def:ainj}).
\item Throughout we fix a field $k$ of characteristic $p$ and write $\varphi : k \to k$ for the Frobenius endomorphism of $k$.
\item We denote by $\Mod_k$ the category of $k$-modules in $\Sp$, the category of spectra.
  The $1$-category of $k$-vector spaces is denoted $\Mod_k^\heart$.
\item The functor $\varphi^*: \Mod_k \to \Mod_k$ is scalar extension along $\varphi: k \to k$.
  In other words $\varphi^*(V) = \tilde k \otimes_\varphi V$, where $\tilde k$ denotes $k$ with $k$-module structure via $\varphi$, and the $k$-module structure on $\varphi^*(k)$ is via the left hand factor.
\item We fix a regular cardinal $\kappa$ such that the cofree coalgebra functor
  $ C_k : \Mod_k \to \cCAlg_k $ is $\kappa$-accessible (see \S\ref{sec:coalg}).
\item Given $X \in \Spc_*$ we let $k\{S\} \coloneqq k \otimes \Sigma^\infty X$.
  When $X$ is a pointed set $k\{X\}$ is the $k$-vector space with generators $X$ and relation $*=0$.
\item Given $X \in \Spc$ we put $k[X] = k \otimes \Sigma^\infty_+ X$.
\end{enumerate}

We write $k[X]$ for both the $k$-module and the $k$-coalgebra.
Since the forgetful functor $U: \cCAlg_k \to \Mod_k$ does not preserve limits, this notation can be ambiguous.
We sometimes write $Uk[X]$ to emphasise that we are thinking of the module, not the coalgebra.

\section{Coalgebras}
\label{sec:coalg}


Let $\scr C$ be a stably symmetric monoidal category.
We write
\[ \cCAlg(\scr C) \coloneqq \CAlg(\scr C^{\op})^{\op} \]
for the category cocommutative coalgebras $\scr C$
and
\[ U_{\scr C} : \cCAlg(\scr C) \to \scr C \]
for the associated forgetful functor.
The functor $U_{\scr C}$ is symmetric monoidal where $\cCAlg(\scr C)$ is given its cartesian symmetric monoidal structure \cite[Proposition 3.2.4.7]{HA}.
In particular the underlying object of a product of coalgebras is given by their tensor product in $\scr C$.
  
Whenever $U_{\scr C}$ admits a right adjoint we will denote it by $C_{\scr C}$ and call it the \emph{cofree coalgebra functor}.
We write $\epsilon : U_{\scr C}C_{\scr C}(X) \to X$ for the counit map of this adjunction.
The cofree coalgebra functor exists in the following situations:
\begin{itemize}
\item If $\scr C$ is presentable and $- \otimes -$ is accessible, then $\cCAlg(\scr C)$ is presentable and $U_{\scr C}$
  admits a right adjoint \cite[Corollaries 3.1.4 and 3.1.5]{Elliptic-I}.
\item If $\scr C^{\op}$ is presentably symmetric monoidal then $\cCAlg(\scr C)$ is presentable and $U_{\scr C}$
  admits a right adjoint \cite[Corollary 3.2.3.5 and Example 3.1.3.6]{HA}.
\end{itemize}  
  
When $\scr C$ is clear from context we will drop it from the notation.
In the case $\scr C = \Mod_k$ we will write $C_k \coloneqq C_{\Mod_k}$.

\begin{warning} \label{warning:cofree}
  Since the tensor product in $\scr C$ usually does not commute with cosifted limits, the cofree coalgebra is usually \emph{not} given by the ``expected formula''
  $\prod_i ((\ph)^{\otimes i})^{h \Sigma_i}$.
\end{warning}

\begin{definition}
  Let $\scr C$ be a stably symmetric monoidal category which admits countable limits and colimits.
  Given an object $X \in \scr C$ we write
  \[ T_{\scr C}(V) = (V^{\otimes p})^{tC_p} \]
  for the \emph{Tate construction} (see e.g. \cite[Definition 2.1.23]{lurie2011derived}).
\end{definition}

Computing cross-effects we learn that $T_{\scr C}(-)$ is an exact functor \cite[Proposition 2.2.3]{lurie2011derived}.

\begin{lemma} \label{lem:tate-trivial}
  Let $\scr C$ be a $k$-linear category with countable limits and colimits.
  There is a natural isomorphism 
  \[ X^{tC_p} \cong \left( \prod_{0 > j} \Sigma^jX \right) \oplus X \oplus \left( \bigoplus_{j > 0} \Sigma^jX \right) \]
  between the Tate construction on an object with trivial action and the indicated expression.
  We will write $\mathrm{pr}_0 : X^{tC_p} \to X$ associated projection operation.  
\end{lemma}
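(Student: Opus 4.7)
The plan is to analyze the defining cofiber sequence
\[ X_{hC_p} \xrightarrow{N} X^{hC_p} \to X^{tC_p} \]
by identifying the outer terms and then showing that the norm is null. First, I would compute the outer terms using the $\Mod_k$-linear structure on $\scr C$. Since $X$ has trivial $C_p$-action, $X_{hC_p}$ is the $\Mod_k$-tensor $X \otimes_k k[BC_p]$ and $X^{hC_p}$ is the cotensor $X^{k[BC_p]}$. Because $k$ is a field, every object of $\Mod_k$ splits as a direct sum of shifted copies of $k$ indexed by its homotopy groups; combined with the standard computation $H_n(BC_p;k) = k$ for all $n \geq 0$, this gives $k[BC_p] \simeq \bigoplus_{n \geq 0} \Sigma^n k$ in $\Mod_k$. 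As tensor commutes with sums and cotensor converts sums to products, we get
\[ X_{hC_p} \simeq \bigoplus_{n \geq 0} \Sigma^n X \quad \text{and} \quad X^{hC_p} \simeq \prod_{n \geq 0} \Sigma^{-n} X. \]

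Next, I would show the norm $N$ is null. Under the splittings, $N$ decomposes into a matrix of natural maps $\Sigma^n X \to \Sigma^{-m} X$ indexed by $(n,m) \in \Z_{\geq 0}^2$. By Yoneda for $\Mod_k$-linear cocontinuous endofunctors, each such natural map is classified by a morphism $\Sigma^n k \to \Sigma^{-m} k$ in $\Mod_k$, i.e., an element of $\pi_{n+m}(k)$, which vanishes for $n+m > 0$. The remaining $(0,0)$-entry is the classical norm on $\pi_0$, which for trivial action is multiplication by $|C_p| = p = 0$ in characteristic $p$. Hence every component of $N$ is null, and so $N$ itself is null.

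A null norm forces the cofiber sequence to split:
\[ X^{tC_p} \simeq X^{hC_p} \oplus \Sigma X_{hC_p} \simeq \Bigl( \prod_{n \geq 0} \Sigma^{-n} X \Bigr) \oplus \Bigl( \bigoplus_{n \geq 0} \Sigma^{n+1} X \Bigr). \]
Separating the $n=0$ factor from the product and reindexing ($j = -n$ for the negatives, $j = n+1$ for the positives) then yields the stated formula. The main obstacle is the nullity step: although the computation in $\Mod_k$ is immediate, the Yoneda-type classification of natural transformations between $\Mod_k$-linear endofunctors of $\scr C$ typically relies on a presentability hypothesis, so transporting the null-homotopy to a $k$-linear category with only countable (co)limits requires some additional care. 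In particular, one cannot simply write $X^{hC_p} = X \otimes_k k^{hC_p}$, since the product on the left need not agree with the sum on the right in $\scr C$.
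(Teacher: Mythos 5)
Your overall skeleton --- identify $X_{hC_p}$ and $X^{hC_p}$ as the tensor $X \otimes k[BC_p]$ and cotensor $X^{k[BC_p]}$, split $k[BC_p] \simeq \bigoplus_{n \ge 0}\Sigma^n k$ using that $k$ is a field of characteristic $p$, show the norm is null, and read off the splitting of the Tate cofiber sequence --- is the same as the paper's, and the first and last steps are fine. The genuine gap is the nullity of the norm. You decompose $N$ into components $\Sigma^n X \to \Sigma^{-m} X$ and claim each is classified by an element of $\pi_{n+m}(k)$ ``by Yoneda for $\Mod_k$-linear cocontinuous endofunctors.'' This is not merely a missing presentability hypothesis: the principle itself is false in this generality. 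Natural transformations $\Sigma^n(-) \Rightarrow \Sigma^{-m}(-)$ of $\Mod_k$-linear endofunctors of a $k$-linear category $\scr C$ are computed by a shift of the Hochschild cohomology (relative center) of $\scr C$ over $k$, not by $\pi_*(k)$; already for $\scr C = \Mod_A$ with $A$ a non-discrete $k$-algebra this is $\pi_{n+m}$ of the center $Z(A/k)$, which is nonzero in positive degrees in standard examples (e.g.\ $A = C^*(M;k)$ for a closed manifold $M$, where the center is related to chains on the free loop space). So componentwise vanishing cannot be extracted from abstract naturality, even enriched naturality; one must use what the norm map actually is. You flag this obstacle yourself, but flagging it is not the same as closing it, and as written the step fails.

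The paper closes exactly this gap by a different mechanism: following \cite[Construction I.1.7]{NS}, for a trivial action the norm is induced by a natural transformation $X \to X$ in $\scr C^{B(C_p \times C_p)}$, i.e.\ \emph{before} taking homotopy orbits in one variable and homotopy fixed points in the other, and this transformation is multiplication by the class $[(C_p \times C_p)/\Delta] = p = 0$ in $\pi_0 k^{B(C_p \times C_p)}$. Hence a nullhomotopy of the norm can be chosen without ever decomposing it into components, and the remainder of your argument then goes through verbatim. Alternatively, you could repair your route by proving that $N_X$ factors as $X \otimes k[BC_p] \xrightarrow{\id_X \otimes \mathrm{Nm}_k} X \otimes k^{BC_p} \to X^{k[BC_p]}$, the last map being the canonical tensor-to-cotensor assembly map; this reduces nullity to the norm on $k$ itself in $\Mod_k$, where your matrix computation is valid. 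But establishing that factorization is essentially the same input as the citation above, and it is precisely the idea missing from your proposal.
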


\begin{proof}
  As the action is trivial and $\scr C$ is $k$-linear we can identify
  $X^{hC_p}$ with $X^{BC_p}$ and $X_{hC_p}$ with $X \otimes BC_P$.
  The norm map is then equivalently given by a natural transformation $X \to X$ in $\scr C^{B(C_p \times C_p)}$. Unrolling definitions (see \cite[Construction I.1.7]{NS}) this natural transformation is given by multiplication by $[(C_p \times C_p)/\Delta] = p = 0$ in $\pi_0k^{B(C_p \times C_p)}$. It follows that we may choose a null homotopy of the norm map. Choosing a splitting of $BC_p$ in $\Mod_k$ now completes the proof of the lemma.
\end{proof}

\section{The Frobenius} \label{sec:frob}

In this section we consider Frobenius endomorphisms for coalgebras.
Using such an endomorphism we then construct, for every $\F_p$-vector space $V$, a certain commutative square involving $k[K(V, n)]$, the cofree coalgebra on $\Sigma^n k \otimes V$, and the Frobenius endomorphism.
The majority of the remainder of the article will be concerned with proving that this square is cartesian.

\begin{lemma} \label{lem:T-homotopy}
  For each $i \in \Z$ there is a natural isomorphism
  between $\pi_iT_k(V)$ and the set of Laurent series
  \[ \left\{ \sum_{j=\epsilon + 2m} v_je^{\epsilon}t^m \ |\ v_j \in \varphi^*\pi_{i+j}(V),\quad \epsilon \in \{0,1\} \right\}. \]
\end{lemma}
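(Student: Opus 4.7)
The plan is to decompose $V^{\otimes_k p}$ as a $C_p$-equivariant $k$-module modulo free $C_p$-summands (whose Tate construction vanishes), reduce to the trivial-action case handled by Lemma \ref{lem:tate-trivial}, and read off the $\pi_*$-description by Künneth over the field $k$.

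Since $k$ is a field, choose a (non-canonical) splitting $V \simeq \bigoplus_n \Sigma^n V_n$ with $V_n := \pi_n(V)$ discrete, so that by Künneth
\[ V^{\otimes_k p} \simeq \bigoplus_{\vec n \in \Z^p} \Sigma^{|\vec n|} V_{n_1} \otimes_k \cdots \otimes_k V_{n_p}, \]
with $C_p$ acting by cyclic permutation (the Koszul sign $(-1)^{n(p-1)}$ on constant tuples being trivial in characteristic $p$). Non-constant tuples $\vec n$ form $C_p$-orbits of size exactly $p$ by primality and thus contribute free $C_p$-summands with vanishing Tate. For each constant tuple, the $C_p$-equivariant $k$-module $V_n^{\otimes p}$ further decomposes modulo a free $C_p$-summand (coming from non-constant multi-indices of any chosen basis) into a trivial-action piece canonically isomorphic to $\varphi^* V_n$; the identification is given by the Frobenius-semilinear assignment $v \mapsto v^{\otimes p}$, whose failure of additivity $(v_1+v_2)^{\otimes p} - v_1^{\otimes p} - v_2^{\otimes p}$ lives entirely in the free part and hence vanishes in the Tate quotient. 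Consequently
\[ T_k(V) \simeq \Bigl(\bigoplus_n \Sigma^{np}\, \varphi^* V_n\Bigr)^{tC_p} \]
with trivial $C_p$-action, and Lemma \ref{lem:tate-trivial} expands the right-hand side as $\prod_{j<0}\Sigma^j X \oplus X \oplus \bigoplus_{j>0}\Sigma^j X$ for $X := \bigoplus_n \Sigma^{np}\varphi^* V_n$. Taking $\pi_i$ yields exactly one copy of $\varphi^* V_n$ per $n \in \Z$, and under the reindexing $j := n - i$ this matches the coefficient of the Laurent monomial $e^\epsilon t^m$ (with $|e|=-1$, $|t|=-2$, $j=\epsilon+2m$) whose coefficient is $v_j \in \varphi^*\pi_{i+j}(V)$, as claimed.

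I expect the main technical obstacle to be that $(-)^{tC_p}$ does not commute with infinite direct sums, so the two passages to ``discard the free $C_p$-summand'' inside an infinite coproduct require justification beyond the stable splitting argument. This is best circumvented by computing instead with the Tate spectral sequence $E_2^{s,t} = \hat H^s(C_p;\pi_t(V^{\otimes_k p})) \Rightarrow \pi_{t-s} T_k(V)$: the free/diagonal splitting of $(\pi_* V)^{\otimes p}$ takes place at the level of ordinary $C_p$-representations over $k$ (where the analysis above is rigorous and the free parts have vanishing Tate cohomology termwise), the resulting $E_2$-page is nonzero only in columns $t \in p\Z$, sparseness together with $k^{tC_p}$-linearity of the differentials (using that the Tate spectral sequence for $k$ itself degenerates) forces collapse at $E_2$, and naturality of the claimed isomorphism in $V$ is then automatic.
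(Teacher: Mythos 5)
Your first two paragraphs are, in substance, the paper's own proof: pick a basis, decompose $V^{\otimes p}$ $C_p$-equivariantly into a diagonal part and a complement built out of free orbits, discard the complement in the Tate construction, identify the diagonal part with $\bigoplus_n \Sigma^{np}\varphi^*\pi_n(V)$ via the Frobenius-semilinear assignment $v \mapsto v^{\otimes p}$ (your observation that its failure of additivity dies in the Tate quotient is exactly the paper's "becomes $k$-linear after projection"), and conclude with Lemma \ref{lem:tate-trivial}. The only organizational difference is that the paper first reduces to $V$ concentrated in a single degree via assembly/coassembly comparison maps and then treats $\pi_0$, whereas you apply Lemma \ref{lem:tate-trivial} once to the total diagonal piece; both routes produce the asymmetric product/sum (Laurent series) structure.

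Where you go astray is your final paragraph: the "main technical obstacle" is not an obstacle, and the spectral-sequence repair you propose for it does not work as stated. Discarding the free summands requires no commutation of $(-)^{tC_p}$ with infinite direct sums: an arbitrary direct sum of induced $C_p$-objects is itself induced, since $\mathrm{Ind}_e^{C_p} = k[C_p]\otimes(-)$ commutes with colimits, and induced objects have vanishing Tate construction with no size hypothesis (the norm map is an equivalence on them). So once the orbit decomposition is performed at the spectrum level --- decompose the $I^p$-indexed sum of shifted copies of $k$ by $C_p$-orbits, which is precisely what choosing a basis buys --- your second paragraph is already a complete argument. By contrast, the Tate spectral sequence argument has genuine gaps: concentration of $E_2$ in columns $t \in p\Z$ rules out $d_r$ only for $r \not\equiv 1 \pmod p$, so $d_{p+1}, d_{2p+1}, \dots$ connect nonzero groups; and linearity over the (degenerate) Tate spectral sequence of $k$ only reduces the question to the differentials on the row-zero generators $\varphi^*V_n \subseteq E_2^{0,pn}$ --- it does not make them vanish. (They do vanish, but the reason is that these classes are represented by the equivariant maps $\Sigma^{pn}k \to V^{\otimes p}$, i.e., by the very splitting argument you were trying to avoid.) Moreover, since $\pi_*(V^{\otimes p})$ is unbounded in both directions, the abutment filtration in each total degree is infinite in both directions, so conditional/strong convergence must be argued, and even then the spectral sequence only yields the associated graded naturally; the natural identification of $\pi_i T_k(V)$ with the Laurent series (rather than some other extension of the same graded pieces) is exactly what the paper's natural map $\varphi^*V \to \pi_0 T_k(V)$, constructed before any basis is chosen, is there to provide. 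So naturality is not "automatic" on your route either.
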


\begin{proof}
  At the $1$-categorical level $V$ naturally splits as the sum of its homotopy groups, $V_j$.
  Using assembly and coassembly maps we have natural comparisons
  \[ T_k\left( \bigoplus_j V_j \right) \to \left( \prod_{j \geq 0} T_k( V_j ) \right) \oplus T_k\left( \bigoplus_{j < 0} V_j \right) \leftarrow \left( \prod_{j \geq 0} T_k( V_j ) \right) \oplus \left( \bigoplus_{j<0} T_k( V_j ) \right). \]  
  After picking a basis for $V$ we have associated diagonal and co-diagonal maps
  $V \to V^{\otimes p} \to V$ such that the complement of $V$ in $V^{\otimes p}$ has a free $C_p$ action.
  This lets us read off that the comparison maps above are isomorphisms.

  Using the structure of $\pi_* T_k(V)$ as a module over $\pi_* T_k(k)$, what remains is to give a natural isomorphism $\pi_0 T_k(V) \wequi \varphi^* V$
  for $V$ concentrated in degree $0$.
  Note that $\pi_0 T_k(V) \iso (V^{\otimes p})^{C_p}/(\text{cross terms})$.
  The canonical map $V \to \varphi_*((V^{\otimes p})^{C_p}), v \mapsto v^{\otimes p}$ is compatible with scalar multiplication, and becomes $k$-linear after projection to the quotient $\varphi_* \pi_0 T_k(V)$.
  By adjunction we obtain a natural map $\varphi^* V \to \pi_0 T_k(V)$.
  We can observe it is an isomorphism by picking a basis for $V$.
\end{proof}

\begin{construction} \label{cntr:pi-star-frob}
  Given a $k$-coalgebra $R$ the universal stable arity $p$ co-operation is the natural map
  \[ R \xrightarrow{\Delta} (R^{\otimes p})^{hC_p} \to (R^{\otimes p})^{tC_p} \cong T_k(R). \]
  Taking homotopy groups, passing across the isomorphism from \Cref{lem:T-homotopy}
  and extracting the coefficient of $t^0$ we obtain a natural $k$-linear map
  \[ \mathfrak{F} : \pi_i(R) \to \varphi^* \pi_i(R). \]
  We refer to this map as the coalgebra Frobenius.
\end{construction}

\begin{definition}
  The functor $\varphi^*$ provides an action of $\N$ on $\Mod_k$ and we let $\VectFk$ denote the oplax limit of this action.
  We refer to the objects of this category as \emph{oplax $\varphi$-modules}.
  Concretely, an oplax $\varphi$-module consists of a $k$-module $V$ and a $k$-linear map $\alpha :  V \to \varphi^*V$.
\end{definition}

The coalgebra Frobenius equips the homotopy groups of any $R \in \cCAlg_k$ with the structure of
an \emph{oplax $\varphi$-module}. One of the main goals of this section will be to isolate the subcategory of oplax $\varphi$-modules which appear as homotopy groups of chain coalgebras.


The $k$-vector space $k$ has a preferred oplax $\varphi$-module structure where $\alpha = \Id$.
Tensoring this object with an $\F_p$-module and mapping out of this object together yield an adjunction
\[ \begin{tikzcd} \VectFk \ar[r, bend right, "(-)^0"] & \Mod_{\F_p} \ar[l, bend right, "k \otimes -"] \end{tikzcd} \]
where the functor $(-)^0$ is given by
\[ (V,\alpha)^0 \coloneqq \map_{\VectFk}(k, (V,\alpha)) \cong  \mathrm{fib}( V \xrightarrow{\mathrm{can} - \alpha}  \varphi^*V). \]

\begin{lemma} \label{lem:solvable-ff}
  If $k$ is separably closed, then the functor $k \otimes -$ is fully faithful.
\end{lemma}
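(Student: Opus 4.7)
The plan is to use the adjunction $k \otimes - \dashv (-)^0$: full faithfulness is equivalent to showing that the unit $\eta_W \colon W \to (k \otimes W)^0$ is an equivalence for every $W \in \Mod_{\F_p}$.

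First I would reduce to $W = \F_p$. Since $W$ is an $\F_p$-module, the functor $- \otimes_{\F_p} W$ is exact and commutes with $\varphi^*$ (which only affects the $k$-factor). Moreover, both the structure map $\alpha$ and the map $\mathrm{can}$ on $k \otimes W$ are obtained from the corresponding maps on $k$ by tensoring with $\id_W$. Taking fibers commutes with $- \otimes_{\F_p} W$, so
\[ (k \otimes W)^0 \cong k^0 \otimes_{\F_p} W, \]
and it suffices to prove $k^0 \cong \F_p$ compatibly with the evident map from $\F_p$.

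Next I would identify the map $\mathrm{can} - \alpha_k \colon k \to \varphi^* k$ with the Artin--Schreier map. Under the $k$-linear isomorphism $\mu \colon \varphi^* k \xrightarrow{\sim} k$ given by $a \otimes b \mapsto a b^p$, the preferred oplax structure $\alpha_k$ becomes $\id_k$, while $\mathrm{can}(c) = 1 \otimes c = c^p \otimes 1$ becomes the Frobenius $c \mapsto c^p$. Hence $\mathrm{can} - \alpha_k$ is identified with the Artin--Schreier map $\mathrm{AS} \colon k \to k$, $c \mapsto c^p - c$.

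The only nontrivial step, and where separable closedness enters, is to show that $\mathrm{AS}$ is surjective: for every $a \in k$ the polynomial $x^p - x - a$ has derivative $-1 \neq 0$, hence is separable, and therefore has a root in $k$. Its kernel is the fixed-point subfield $\{c \in k : c^p = c\} = \F_p$. Thus $k^0 \cong \F_p$ (discrete), and combining with the previous step gives $(k \otimes W)^0 \cong W$, i.e., $\eta_W$ is an equivalence. The main obstacle is precisely this Artin--Schreier surjectivity; everything else is formal unwinding of the oplax structure and the adjunction.
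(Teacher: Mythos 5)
Your proposal is correct and follows essentially the same route as the paper: reduce to $W = \F_p$ by a formal argument (the paper uses that both adjoints preserve colimits and every $\F_p$-module is a sum of shifts of $\F_p$, while you use exactness of $-\otimes_{\F_p} W$; these are interchangeable), then identify $\mathrm{can} - \alpha$ on $k$ with the Artin--Schreier map and invoke exactness of $0 \to \F_p \to k \to k \to 0$, which is exactly where separable closedness enters in the paper as well. Your explicit unwinding of $\varphi^* k \cong k$ via $a \otimes b \mapsto ab^p$ is just the detail the paper compresses into ``unrolling the definitions.''
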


\begin{proof}
  As both $k \otimes -$ and $(-)^0$ preserve colimits and every $\F_p$-vector space is a sum of shifts of $\F_p$ itself it will suffice to show that the
  unit map $\F_p \to k^0$ is an isomorphism. Unrolling the definitions this reduces to the exactness of the Artin--Schreier sequence
  \[ 0 \to \F_p \to k \xrightarrow{x \mapsto x-x^p} k \to 0. \]
\end{proof}

\begin{definition} \label{def:solvable}
We say that an oplax $\varphi$-module is \emph{\solvable} if it is in the essential image of $k \otimes -$.
\end{definition}

\begin{lemma}
  Let $(V, \alpha)$ be an oplax $\varphi$-module with $V \in \Mod_k^\heart$.
  $(V, \alpha)$ is \solvable{} if and only if 
  the $k$-linear span of $\{ v \in V \ |\ \alpha(v) = 1 \otimes v \} = \pi_0((V,\alpha)^0)$ is all of $V$.
\end{lemma}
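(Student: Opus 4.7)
The lemma has two directions, and my plan is to address them separately. The forward direction is routine: if $(V, \alpha) \cong k \otimes W$ for some $\F_p$-module $W$, then by construction of the oplax structure on $k \otimes W$ the canonical generators $1 \otimes w$ (for $w \in W$) satisfy $\alpha(1 \otimes w) = 1 \otimes (1 \otimes w)$, so they lie in $V^{\alpha=1}$ and manifestly $k$-span $V$. (Equivalently, one can invoke \Cref{lem:solvable-ff} to identify $(k \otimes W)^{\alpha=1}$ with $W$.)

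For the backward direction I set $W \coloneqq V^{\alpha=1}$. First I note this is an $\F_p$-subspace of $V$: since $\alpha$ is $k$-linear and $\mathrm{can}$ is $\varphi$-semilinear, one computes $(\mathrm{can} - \alpha)(cv) = (c^p - c)\mathrm{can}(v) + c(\mathrm{can}-\alpha)(v)$, which vanishes on $V^{\alpha=1}$ when $c \in \F_p$. By adjunction the counit $f \colon k \otimes_{\F_p} W \to V$, $x \otimes w \mapsto xw$, is a morphism of oplax $\varphi$-modules, and it is surjective by hypothesis. The main task is to prove $f$ is injective, which I will do by showing that any $\F_p$-linearly independent finite subset of $W$ is $k$-linearly independent in $V$, by induction on the size $n$.

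The base case $n=1$ is immediate. For the inductive step, given an $\F_p$-linearly independent $\{w_1,\ldots,w_n\}\subseteq W$, suppose $\sum_{i=1}^n x_i w_i = 0$. The inductive hypothesis applied to $\{w_1,\ldots,w_{n-1}\}$ forces $x_n \neq 0$, and I normalize to $x_n = 1$. Applying $\alpha$ and using $\alpha(w_i) = 1 \otimes w_i$ yields
\[ (1 \otimes w_n) + \sum_{i<n} x_i\,(1 \otimes w_i) = 0 \quad \text{in } \varphi^*V, \]
while applying $\mathrm{can}$ (which is $\varphi$-semilinear, $\mathrm{can}(xw) = x^p\,(1 \otimes w)$) yields
\[ (1 \otimes w_n) + \sum_{i<n} x_i^p\,(1 \otimes w_i) = 0. \]
Subtracting gives $\sum_{i<n}(x_i - x_i^p)\,(1 \otimes w_i) = 0$. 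By the inductive hypothesis $\{w_i\}_{i<n}$ is $k$-linearly independent in $V$; extending to a $k$-basis of $V$ shows $\{1 \otimes w_i\}_{i<n}$ is $k$-linearly independent in $\varphi^*V$. Hence each $x_i = x_i^p$, so $x_i \in \F_p$, contradicting the $\F_p$-linear independence of $\{w_1,\ldots,w_n\}$.

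Once $f$ is a $k$-linear isomorphism, the two oplax structures agree on the generators $1 \otimes w$ by construction and hence everywhere by $k$-linearity, so $f$ is an isomorphism in $\VectFk$. I do not expect a serious obstacle here; the one subtlety worth flagging is that $k$ is not assumed perfect (since separably closed does not imply perfect in positive characteristic), so $p$-th roots of elements of $k$ need not exist. The induction above avoids this issue because it extracts $x_i \in \F_p$ directly from $x_i = x_i^p$ without ever inverting Frobenius.
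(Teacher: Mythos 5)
Your proof is correct and is essentially the paper's argument (which follows Lurie's Lemma 3.5.3): reduce to injectivity of $k \otimes \pi_0((V,\alpha)^0) \to V$, apply $\alpha$ and the canonical $\varphi$-semilinear map to a putative relation, subtract, and use that $\varphi^*$ preserves linear independence of the remaining $n-1$ vectors to force the coefficients into $\F_p$, contradicting $\F_p$-independence. Your only cosmetic difference is running an induction on $n$ rather than taking a minimal counterexample, plus spelling out the easy forward direction and the identification of oplax structures that the paper leaves implicit.
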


\begin{proof}
  Observe that we have a natural map $k \otimes \pi_0((V,\alpha)^0) \to V$.
  It will suffice to prove that this map is injective.
  Our proof follows that of \cite[Lemma 3.5.3]{lurie2011derived} closely.

  Suppose we have $v_1, \cdots, v_n \in V$ with $\alpha(v_i) = 1 \otimes v_i$ which are linearly independent over $\F_p$
  and $\sum_{i=1}^n \lambda_i v_i = 0$ with $\lambda_i \in k$.
  We must show that $\lambda_1 = \cdots = \lambda_n = 0$. We proceed by contradiction.
  Assume $n$ is minimal for such a relation (so that all $\lambda_i$ are non-zero) and dividing by $\lambda_1$ we may even assume that $\lambda_1 = 1$.
  Applying $\alpha$ and $1 \otimes -$ to this relation and subtracting we find that
  \[ 0 = \sum_{i=1}^n (\lambda_i - \lambda_i^p) \otimes v_i = \sum_{i=2}^n (\lambda_i - \lambda_i^p) \otimes v_i \]
  in $\varphi^*V$. As $\varphi^*(-)$ is exact and only $v_2, \cdots, v_n$ appear in the latter expression this contradicts the minimality of $n$.
\end{proof}

We now turn to the task of constructing variants of the coalgebra Frobenius which exist at the spectrum and coalgebra level.

\begin{lemma} \label{lem:untwist2}
  The lax symmetric monoidal natural transformation
  \[ \Delta^{tC_P} : (k\{-\})^{tC_p} \Rightarrow (k\{(-)^{\wedge p}\})^{tC_p} : \Spc_* \to \Mod_k. \]  
  induced by the diagonal is an isomorphism on finite dimensional spaces.
\NB{rwb: After thinking about it carefully, $\coprod_{\infty} BC_2$ is a counterexample without the finite dimensionality assumption.}
\end{lemma}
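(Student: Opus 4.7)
My plan is to realize the natural transformation $\Delta^{tC_p}$ as $(-)^{tC_p}$ applied to the first map in a $C_p$-equivariant cofiber sequence of $k$-module spectra
\[ k\{X\} \xrightarrow{\Delta_\ast} k\{X^{\wedge p}\} \to k\{Q_X\}, \]
where $Q_X := X^{\wedge p}/\Delta X$ is the cofiber of the space-level diagonal in pointed $C_p$-spaces. Here $k\{X\}$ carries the trivial $C_p$-action while $k\{X^{\wedge p}\}$ carries the permutation action, and the existence of this sequence in $C_p$-equivariant $k$-modules uses that $k\{-\}$ is a left adjoint and hence preserves cofibers. Since $(-)^{tC_p}$ is exact, the lemma reduces to showing $(k\{Q_X\})^{tC_p} = 0$ for finite-dimensional $X$.

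For this I would establish that $Q_X$ is a \emph{free} pointed $C_p$-space, meaning $C_p$ acts freely on $Q_X$ away from the basepoint. Upon fixing a CW model for $X$, the smash power $X^{\wedge p}$ inherits a $C_p$-equivariant CW structure whose non-basepoint cells are $p$-tuples $(c_1,\dots,c_p)$ of non-basepoint cells of $X$, with $C_p$ acting by cyclic permutation. Since $p$ is prime, every orbit has size either $1$ or $p$: the size-one orbits are precisely the constant tuples $(c,\dots,c)$, which collectively constitute $\Delta X$, whereas the remaining orbits have size $p$ and carry a free $C_p$-action. Consequently $Q_X$ is assembled from $C_p$-cells of the form $(C_p)_+ \wedge D^n$, so $k\{Q_X\}$ decomposes as a direct sum of $k[C_p]$-module spectra induced up from the trivial subgroup. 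Since Tate vanishes on such induced modules and the class of induced modules is closed under arbitrary direct sums, this gives the desired vanishing.

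The main technicality to watch is that ``finite-dimensional'' only bounds the dimension of cells of $X$, not their number, so the CW decomposition of $X^{\wedge p}$ can involve infinitely many cells. Nevertheless the orbit analysis above is purely local to each cell orbit, and closure of induced modules under coproducts keeps the conclusion intact, so this is not a real obstacle. The boundedness of $\dim X$ is essential, however: in its absence the homotopy fixed points in $(-)^{tC_p}$ fail to commute with the filtered colimit along skeleta, opening the door to counterexamples of the type $\coprod_\infty BC_2$ flagged in the footnote.
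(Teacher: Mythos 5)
Your overall strategy is sound and is, at its core, the same mechanism the paper uses: reduce via the cofiber sequence $k\{X\} \to k\{X^{\wedge p}\} \to k\{Q_X\}$ and exactness of $(-)^{tC_p}$ to a Tate-vanishing statement for the part of $X^{\wedge p}$ away from the diagonal, where the $C_p$-action is free, with finite-dimensionality ensuring that only finitely many cofiber sequences (rather than a filtered colimit) are needed. However, your cell-level implementation has a genuine error. If you fix a CW structure on $X$ and give $X^{\wedge p}$ the product cell structure, the cells are indeed tuples $(c_1,\dots,c_p)$, but this is \emph{not} a $C_p$-CW structure: the stabilizer of a diagonal cell $(c,\dots,c)$ is all of $C_p$, and it acts nontrivially \emph{inside} that cell (it permutes the coordinates of $c^{\times p}$). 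Relatedly, the setwise-fixed cells do not ``collectively constitute $\Delta X$'': their union is $\bigcup_c c^{\times p}$, which strictly contains the diagonal as soon as $X$ has a cell of positive dimension. For example, with $X = S^1$ (one $0$-cell, one $1$-cell) and $p=2$, the unique non-basepoint cell of $X \wedge X \cong S^2$ is the $2$-cell $e \times e$: it is setwise fixed by the flip, but it is not the diagonal circle, and the flip acts nontrivially on it. Consequently $\Delta X$ is not a subcomplex, $Q_X$ inherits no cell structure from this construction, and the claim that $Q_X$ is assembled from free $C_p$-cells does not follow as argued. The standard repair is to work simplicially: for simplicial sets one has $(X^{\times p})_n = (X_n)^{\times p}$, so the simplices fixed by the cyclic action are \emph{exactly} the diagonal ones; hence $\Delta X$ is a simplicial subset and $Q_X$ is a finite-dimensional simplicial set whose nondegenerate simplices away from the basepoint form free $C_p$-sets.

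There is a second, smaller gap: even granting a free cell structure on $Q_X$, being built from cells of the form $(C_p)_+ \wedge D^n$ yields a finite skeletal \emph{filtration} whose associated graded pieces are induced $k[C_p]$-modules, not a direct sum decomposition into induced modules. (The circle with the rotation $C_p$-action and a disjoint basepoint is a finite free pointed $C_p$-complex, yet $k[S^1]$ with the resulting action is not induced: $\pi_0 \cong k$ with trivial action is not a free $k[C_p]$-module.) This step is easily repaired with tools you already invoke: induction is a left adjoint, so arbitrary direct sums of induced modules are induced and each graded piece has vanishing Tate construction; then exactness of $(-)^{tC_p}$ applied across the finitely many stages of the filtration — this is precisely where finite-dimensionality enters — gives $(k\{Q_X\})^{tC_p} = 0$. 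For comparison, the paper avoids all cell-structure subtleties by a categorical induction on dimension: both functors preserve finite colimits and a finite-dimensional space is built from pointed sets by finitely many pushouts, so one reduces to $X \in \Set_*$, where the orbit decomposition $X^{\wedge p} \cong X \vee Y$ with $Y$ free is immediate.
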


\begin{proof}
  As the source and target both preserve finite colimits we may reduce to the case where $X \in \Set_*$.
  For a pointed set $X$ the diagonal map $X \to X^{\wedge p}$ induces a splitting of
  $X^{\wedge p}$ as $X \vee Y$ where the $C_p$-action on $Y$ is free.
  As $(-)^{tC_p}$ kills free summands, the desired conclusion now follows.  
\end{proof}

\begin{construction} \label{def:artin-schreier}
  Let $X \in \Spc_*^{\mathrm{fd}}$.
  We construct the \emph{Frobenius functional}
  $ F_X : C_k(k\{X\}) \to k\{X\} $
  as the composite of the canonical arity $p$ co-operation
  \[  C_k(k \{X\}) \xrightarrow{\Delta} (C_k(k \{X\})^{\otimes p})^{hC_p} \xrightarrow{\epsilon} ((k \{X\})^{\otimes p})^{hC_p} \]
  on a cofree coalgebra with the projection  
  \[ ((k \{X\})^{\otimes p})^{hC_p} \to T_k(k \{X\}) \stackrel{L.\ref{lem:untwist2}}{\cong} (k \{X\})^{tC_p} \xrightarrow{\mathrm{pr}_0} k \{X\}. \]
  Passing to mates we obtain the \emph{Frobenius map}
  \[ F \colon C_k(k \{X\}) \to C_k(k \{X\}) \]
  as mate to $F_X$
  and the \emph{Artin--Schreier map}
  \[ 1-F \colon C(k \{X\}) \to C(k \{X\}) \]
  as mate to $\epsilon - F_X$.
  (Recall that $\epsilon$ is the counit of the underlying--cofree adjunction).
\end{construction}

Note that by construction, both $F$ and $1-F$ are natural in $X \in \Spc_*^{\mathrm{fd}}$.

\begin{remark} \label{rmk:Frob-Q0}
  Restricting to the objects $S^n \in \Spc_*$ the
  coalgebras $C_k(k \otimes \Ss^n)$ represent the
  functor sending a coalgebra $A$ to the underlying space of its $k$-linear dual, $A^\vee$ (and loop spaces of this space).
  The $k$-linear dual $A^\vee$ moreover naturally lands in $\CAlg_k$.
  Unrolling the definitions of the Frobenius and Artin--Schreier maps
  we may identify their action on the underlying space of the commutative algebra $A^\vee$
  as the Dyer--Lashof operations $Q_0$ and $1-Q_0$ respectively.
\end{remark}

\begin{remark} \label{rmk:F-vs-F}
  Comparing the Constructions \ref{cntr:pi-star-frob} and \ref{def:artin-schreier} we can extract that
  the map $F : (\pi_iR)^{\vee} \to (\pi_iR)^{\vee} $ obtain by specializing \ref{def:artin-schreier} to the case $X=S^i$ and looking at maps out of $R$ and
  the map $\mathfrak{F} : \pi_iR \to \varphi^* \pi_iR $ are related by the following diagram:
  \[ \begin{tikzcd}
    & (\varphi^* \pi_iR)^{\vee} \ar[dr, "\mathfrak{F}^{\vee}"] & \\
    (\pi_iR)^\vee \ar[rr, "F"] \ar[ur, "f \mapsto \varphi^*f"] & & (\pi_*R)^\vee.
  \end{tikzcd} \]  
\end{remark}
\todo{Think about this.}

As a consequence of the correspondence between Dyer--Lashof operations and Steenrod operations on cochain algebras and the fact that the bottom Steenrod operation acts by the identity (see e.g. \cite[Proposition 8.1]{may-algebraic-approach}) we obtain:

\begin{corollary} \label{cor:pi-star-solvable}
  Given $X \in \Spc$, the oplax $\varphi$-module $\pi_i(k[X])$ is \solvable.
\end{corollary}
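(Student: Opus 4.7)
The plan is to dualize the claim and apply the classical fact that the bottom Steenrod operation acts as the identity on mod-$p$ cohomology.

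First I would recast solvability concretely. Since $\pi_i(k[X]) = H_i(X; k) \cong k \otimes_{\F_p} H_i(X; \F_p)$, the elements of the form $1 \otimes v$ with $v \in H_i(X; \F_p)$ span $\pi_i(k[X])$ over $k$. By the criterion of the lemma preceding the corollary, it therefore suffices to check that $\mathfrak{F}(1 \otimes v) = 1 \otimes (1 \otimes v)$ in $\varphi^* \pi_i(k[X])$ for every such $v$.

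Next, by naturality of $\mathfrak{F}$ in the coalgebra, and the fact that every homology class of $X$ is supported on a finite subcomplex, I would reduce to the case where $X$ is a finite CW complex. In that case $(k[X])^\vee \simeq k^X$ is the $k$-cochain algebra, the cohomology $H^i(X;k)$ is finite-dimensional, and the diagram in \Cref{rmk:F-vs-F} translates the desired identity on $\mathfrak{F}$ into the statement that the dual operation $F$ on $(\pi_i k[X])^\vee \cong H^i(X; k)$ restricts to the identity on the $\F_p$-subspace $H^i(X; \F_p)$. By \Cref{rmk:Frob-Q0}, the operation $F$ on the underlying space of $(k[X])^\vee$ is precisely the Dyer--Lashof operation $Q_0$ on the $\E_\infty$-algebra $k^X$. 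Under the Dyer--Lashof/Steenrod correspondence cited in the statement of the corollary (\cite[Proposition 8.1]{may-algebraic-approach}), $Q_0$ coincides with the bottom Steenrod operation ($Sq^0$ at $p = 2$, or $P^0$ at odd $p$), which is classically the identity on mod-$p$ cohomology. This gives $F = \id$ on $H^i(X; \F_p)$, and hence the required relation on $\mathfrak{F}$.

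The main obstacle, and essentially the only real work, lies in the bookkeeping around \Cref{rmk:F-vs-F}: one must check that the identity action of $F$ on the $\F_p$-part of the dual translates precisely to the relation $\mathfrak{F}(1 \otimes v) = 1 \otimes (1 \otimes v)$, with the $\varphi^*$ twists lined up correctly. Once the finiteness reduction has been made this is a formal unwinding, after which the Frobenius-fixed vectors $\{v : \mathfrak F(v) = 1 \otimes v\}$ are seen to contain the $\F_p$-subspace $H_i(X; \F_p)$, whose $k$-span is all of $\pi_i(k[X])$, proving solvability.
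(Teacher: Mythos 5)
Your proposal is correct and takes essentially the same route as the paper: the paper derives this corollary in one line from \Cref{rmk:Frob-Q0} and \Cref{rmk:F-vs-F} together with the Dyer--Lashof/Steenrod correspondence and the classical fact that the bottom Steenrod operation is the identity on mod-$p$ cohomology, which is precisely the content you spell out. Your added reduction to finite complexes (via naturality of $\mathfrak{F}$ and the fact that homology classes are supported on finite subcomplexes) is a harmless extra step that makes the dualization bookkeeping cleaner, not a different argument.
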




\begin{construction} \label{cnstr:AS-square}
  We will construct a functor
  $\Spc_*^{\mathrm{fd}} \to \Fun((\Delta^1)^{\times 2}, \cCAlg_k)$
  sending a pointed space $X$ to a commutative square of coalgebras
  \[ \begin{tikzcd}
    k[\Omega^\infty (\F_p \{X\})] \ar[r] \ar[d] &
    C_k(k \{X\}) \ar[d, "1-F"] \\ 
    k \ar[r] &
    C_k(k \{X\}),
  \end{tikzcd} \]
  where
  \begin{enumerate}
  \item the top horizontal map is
    the mate of
    the $k$-linearization of
    the mate of
    $\Omega^\infty$ of
    the map of spectra
    \[ \F_p \{X\} \to k \{X\}. \]
  \item the bottom horizontal map is obtained by applying $C_k$ to the map
    $0 \to k \{X\}$,
  \item the left vertical map is the unique one ($k$ being the terminal coalgebra), and
  \item the right vertical map is the Artin--Schreier map of \Cref{def:artin-schreier}.
  \end{enumerate}
\end{construction}

\begin{proof}[Details.]
  What remains is to construct a homotopy filling the square above, natural in $X$.
  For this we start by analyzing the space of of natural transformations
  \[ \Map_{\Fun(\Spc_*^{\mathrm{fd}}, \cCAlg_k)} (k[\Omega^\infty (\F_p \{-\})], C_k(k \{-\})). \]
  Passing to mates twice we see this is isomorphic to
  \[ \Map_{\Fun(\Spc_*^{\mathrm{fd}}, \Spc)}(\Omega^\infty (\F_p \{-\}), \Omega^\infty(k \{-\})). \]
  Using the fact that every finite dimensional space $X$ can be written as a filtered colimit of finite spaces 
  and both functors are defined on all spaces and commute with filtered colimits on this category we see that the space above is isomorphic to
  \[ \Map_{\Fun(\Spc_*^{\mathrm{fin}}, \Spc)}(\Omega^\infty (\F_p \{-\}), \Omega^\infty(k \{-\})). \]
  Applying \cite[Proposition 1.4.2.22]{HTT} (using that both functors are reduced and excisive) we get an isomorphism with
  \[ \Map_{\Fun(\Spc_*^{\mathrm{fin}}, \Sp)}(\F_p \{-\}, k \{-\}). \]
  Finally, because both functors preserve finite colimits, and 
  $ \Fun^{\mathrm{Lex}}(\Spc_*^{\mathrm{fin}}, \Sp) \iso \Fun^{\mathrm{Lex}}(\Spc^{\mathrm{fin}}, \Sp) \iso \Fun(*, \Sp) \iso \Sp $
  this last space is isomorphic to $\Map_{\Sp}(\F_p, k)$.
  
  Unrolling these isomorphisms we see that the space of choices of $2$-cell in the square is either empty or contractible and that it is nonempty if and only if the original square commutes when $X=S^0$.
  In the case $X=S^0$ the claim reduces to verifying that $F$ acts as the identity on
  $k[\Omega^\infty(\F_p \{S^0\})]$.
  As $\Omega^\infty( \F_p)$ is a set with $p$ elements
  this in turn reduces to evaluating $F$ on $k$ itself where it acts as the identity (see \Cref{rmk:Frob-Q0}).
\end{proof}

Precomposing the functor from \Cref{cnstr:AS-square} with the functor
$ \Set_* \hookrightarrow \Spc_* \xrightarrow{\Sigma^n} \Spc_* $
we obtain a commutative square in $\cCAlg(\Mod_k)$ of the form
\begin{equation} \label{eq:key-pullback}
\begin{CD}
k[\Omega^\infty \Sigma^n \F_p\{W\}] @>c>> C_k(\Sigma^n k\{W\}) \\
@VVV                             @V{F-1}VV \\
k @>{0}>>    C_k(\Sigma^n k\{W\})
\end{CD}
\end{equation}
natural in $W \in \Set_*$.
We shall prove in \S\ref{sec:key-lemma} that this is a pullback square.
This is key to the proof of our main theorem.

\section{Polynomial Functors} \label{set:polyfun}

In preparation for proving that the Artin--Schreier square from \Cref{eq:key-pullback} is a pullback square we will need to review some material on polynomial functors.

\begin{definition} \label{def:ainj}
  Let $\Set_*^\ainj$ be the category of pointed sets and almost-injective maps, that is, pointed maps $f: S \to T$ such that
  the restriction of $f$ to $f^{-1}(T \setminus *)$ is injective.
\end{definition}

We shall study $\Fun(\Set_*^\ainj, \Mod_k^\heart)$.

\begin{construction}
  Let $D$ be the exact functor
  \[ D : \Fun(\Set_*^\ainj, \Mod_k^\heart) \to \Fun(\Set_*^\ainj, \Mod_k^\heart) \]
  defined by $D(F)(X) = \ker(F(X \vee S^0) \to F(X \vee *))$.
\end{construction}

To see that $D$ is exact note that there is a retraction
$ F(X \vee *) \to F(X \vee S^0) \to F(X \vee *) $.
In particular we have $F(- \vee S^0) \cong F(-) \oplus D(F)(-)$.

\begin{definition} \label{dfn:polynomial}
  Given a functor $F \in \Fun(\Set_*^\ainj, \Mod_k^\heart)$
  we say that it is \emph{polynomial of degree $\leq d$} if
  \begin{enumerate}
  \item $F$ commutes with filtered colimits,
  \item $F$ takes finite sets to finite dimensional vector spaces and
  \item $D^{d+1}(F) = 0$.
  \end{enumerate}
  We write $\PolyFund{d}$ for the category of polynomial functors of degree $\leq d$
  and $\PolyFun$ for the category of all polynomial functors (that is, functors which are polynomial of some degree).
\end{definition}

\begin{example} \label{ex:poly-free}
  Let $\scr V$ be the functor $X \mapsto k\{X\}$, where $k\{X\}$ is the vector space with generators $X$ and relation $*=0$.
  $\scr V$ is polynomial of degree $1$.
\end{example}

\begin{example} \label{ex:proj-gens}
  Let $X$ be a finite pointed set.
  The $k$-linearized corepresentable functor on $X$ given by $k[ \Map(X,-) ]$
  is projective and polynomial of degree $|X|-1$.
  In fact these examples are somewhat redundant as $k[\Map(S^0,-)] \cong k \oplus \scr V $.
\end{example}

A polynomial functor is determined by its restriction to the subcategory of finite pointed sets and almost injective maps.
This means that the projective objects from \Cref{ex:proj-gens} are a collection of projective generators for $\PolyFun$.

\begin{lemma} \label{lem:poly-closure}
  The full subcategory
  $\PolyFun \subseteq \Fun(\Set_*^\ainj, \Mod_k^\heart)$
  of polynomial functors is closed under
  extensions, passing to subobjects and quotients and (pointwise) tensor products.
\end{lemma}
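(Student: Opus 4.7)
The plan is to verify the three defining conditions of polynomiality separately for each closure property. Conditions (2) and (3) are almost formal. Condition (2), finite-dimensional values on finite sets, is preserved under extensions, subobjects, quotients and tensor products because finite-dimensional $k$-vector spaces are. Condition (3), vanishing of $D^{d+1}$, is preserved because $D$ is exact: if $F', F''$ have degree $\leq d$ then for an extension, subobject or quotient $F$ one has $D^{d+1}F = 0$ by applying the exact functor $D^{d+1}$ to the relevant short exact sequence. So the content of the lemma lies in condition (1) and in computing degrees for tensor products.

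For extensions and quotients, condition (1) is immediate from exactness of filtered colimits in $\Mod_k^\heart$: given a short exact sequence $0 \to F' \to F \to F'' \to 0$ and knowledge that two of the three functors commute with filtered colimits, the five-lemma forces the third to do so.

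The main obstacle is the subobject case of condition (1): we must show that a subfunctor $G \subseteq F$ of a polynomial $F$ automatically commutes with filtered colimits. The plan is to exploit the retractions available in $\Set_*^\ainj$. Given a finite pointed subset $X_0 \subseteq X$, the collapse map $r \colon X \to X_0$ sending everything outside $X_0$ to the basepoint is almost-injective (its restriction to $r^{-1}(X_0 \setminus \{*\}) = X_0 \setminus \{*\}$ is the identity), and $r \circ i = \id_{X_0}$ where $i \colon X_0 \hookrightarrow X$ is the inclusion. Hence $G(X_0)$ sits as a retract of $G(X)$ inside $F(X)$. Now writing $X = \colim X_0$ as a filtered colimit of its finite pointed subsets and using that $F$ commutes with this colimit, any $f \in G(X) \subseteq F(X)$ lifts to some $f_0 \in F(X_0)$ with $F(i)(f_0) = f$; applying $F(r)$ gives $f_0 = F(r)(f) = G(r)(f)$, which lies in $G(X_0)$ since $G$ is a subfunctor. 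This proves $\colim G(X_0) \twoheadrightarrow G(X)$ is surjective, while injectivity follows from exactness of filtered colimits together with the inclusion $\colim G(X_0) \hookrightarrow \colim F(X_0) = F(X)$.

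For tensor products the strategy is different: we directly compute $D$ on a tensor product. The splitting $F(X \vee S^0) \cong F(X) \oplus DF(X)$ (coming from the almost-injective section of $X \vee S^0 \twoheadrightarrow X$) and its analogue for $G$ give
\[ (F \otimes G)(X \vee S^0) \cong (F(X) \oplus DF(X)) \otimes (G(X) \oplus DG(X)), \]
from which, reading off the kernel of the projection to $F(X) \otimes G(X)$, we obtain
\[ D(F \otimes G) \cong (F \otimes DG) \oplus (DF \otimes G) \oplus (DF \otimes DG). \]
Induction on $d' + d''$ now gives $D^{d'+d''+1}(F \otimes G) = 0$ when $F, G$ have degrees $\leq d', d''$ respectively. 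Condition (1) for $F \otimes G$ follows because $\otimes_k$ commutes with filtered colimits in each variable, and condition (2) because tensor products of finite-dimensional vector spaces are finite-dimensional.
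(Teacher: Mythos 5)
Your proof is correct, and where it overlaps with the paper's argument it is essentially the same argument: exactness of $D$ disposes of subobjects, quotients and extensions for the vanishing condition, and the splitting $F(\ph \vee S^0) \cong F \oplus DF$ gives the identical formula $D(F \otimes G) \cong (F \otimes DG) \oplus (DF \otimes G) \oplus (DF \otimes DG)$, so degrees add under tensor products. The genuine difference is condition (1) of the definition: the paper's proof declares condition (2) clear and then ``focuses on the vanishing of $D^n$'', saying nothing about preservation of filtered colimits, whereas you prove it — and you are right that the subobject case is the non-formal one. It really does use the collapse retractions $r \colon X \to X_0$ available in $\Set_*^\ainj$: on pointed sets with only \emph{injective} maps, the subfunctor of $X \mapsto k\{X\}$ which equals $k\{X\}$ for infinite $X$ and $0$ for finite $X$ is a subfunctor of a finitary functor that is not finitary, so some such argument is genuinely needed. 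In this respect your write-up supplies a verification the paper leaves implicit. One small patch: as written, your retraction argument shows that $G$ commutes with the canonical colimits $X \cong \colim_{X_0 \subseteq X} X_0$ over finite pointed subsets, while condition (1) asks for arbitrary filtered colimits. The bridge is routine — finite pointed sets are compact in $\Set_*^\ainj$ and subset inclusions are cofinal among almost-injective maps from finite sets into $X$, so commuting with all filtered colimits is equivalent to agreeing with the left Kan extension from $\Fin_*^\ainj$, which is exactly what you computed — but this reduction should be stated (alternatively, run your argument for a general filtered colimit, retracting onto the image of a cone map $X_\alpha \to X$ and using the evident almost-injective section of $X_\alpha \to u_\alpha(X_\alpha)$). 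Finally, note that your five-lemma step for quotients invokes the subobject case for the kernel, so the subobject case must be established first; your ordering respects this, but the dependence is worth making explicit.
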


\begin{proof}
  It is clear that the subcategory of functors which taking values in finite dimensional vector spaces is closed under extensions, passing to subobjects and quotients and tensor products. We may therefore focus on the vanishing of $D^n$.
  As $D(-)$ is exact the vanishing of $D^n(F)$ implies the same for any subobject or quotient.
  Similarly, if $D^n(F)$ and $D^n(G)$ vanish so does any extension of them.
  For tensor products we note that $F(\ph \vee S^0) \cong F \oplus DF$.
  From this it follows that
  \[ D(F \otimes G) \cong F \otimes D(G) \oplus D(F) \otimes G \oplus D(F) \otimes D(G), \]
  and hence $D^{n+m}(F \otimes G) = 0$ if $D^n F = 0$ and $D^m G = 0$.
\end{proof}

\begin{lemma} \label{lem:der-conservative}
  The functor
  \[ D \times \ev_\emptyset : \PolyFun \to \PolyFun \times \Mod_k^\heartsuit \]
  is conservative.
\end{lemma}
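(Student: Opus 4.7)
The plan is to reduce to showing vanishing for a single polynomial functor and then use the characterization of polynomiality together with filtered colimit preservation to deduce the vanishing pointwise.

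First I would take a morphism $\alpha \colon F \to G$ in $\PolyFun$ such that $D(\alpha)$ and $\ev_\emptyset(\alpha)$ are both isomorphisms, and form its kernel $K$ and cokernel $C$ in $\Fun(\Set_*^\ainj, \Mod_k^\heart)$. By \Cref{lem:poly-closure}, both $K$ and $C$ are polynomial functors. Since $D$ is exact and $\ev_\emptyset$ is trivially exact, the assumptions on $\alpha$ give $D(K) = D(C) = 0$ and $\ev_\emptyset(K) = \ev_\emptyset(C) = 0$. Thus the statement reduces to proving the following: if $H \in \PolyFun$ satisfies $D(H) = 0$ and $H(\emptyset) = 0$, then $H = 0$.

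Next I would exploit the splitting $F(X \vee S^0) \cong F(X) \oplus DF(X)$ recalled just after the construction of $D$. Applied to $H$, the vanishing $D(H) = 0$ yields a natural isomorphism $H(X \vee S^0) \cong H(X)$ induced by the retraction $X \vee S^0 \to X$. Starting from the base case $H(\emptyset) = 0$ and inducting on cardinality, this shows $H(X) = 0$ for every finite pointed set $X$, since every such $X$ is obtained from $\emptyset$ by iteratively wedging on copies of $S^0$.

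Finally, every pointed set $X$ is the filtered colimit, along (injective, hence almost-injective) inclusions, of its finite pointed subsets. Since polynomial functors commute with filtered colimits by \Cref{dfn:polynomial}(1), this forces $H(X) = 0$ for all $X$, completing the argument. The only step requiring any care is justifying the retraction splitting in the definition of $D$, but this is exactly what was observed immediately after the construction; there is no substantive obstacle here.
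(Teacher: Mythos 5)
Your proof is correct and takes essentially the same route as the paper's: reduce by exactness of $D$ and $\ev_\emptyset$ to showing that any $H \in \PolyFun$ with $D(H)=0$ and $H(\emptyset)=0$ vanishes, then use the splitting $H(X \vee S^0) \cong H(X) \oplus D(H)(X)$ together with induction over finite pointed sets and preservation of filtered colimits. The only difference is cosmetic: you spell out the kernel/cokernel reduction (citing closure of $\PolyFun$ under subobjects and quotients) that the paper leaves implicit in the phrase ``it will suffice to show that the kernel of $D \times \ev_\emptyset$ consists of only the zero object.''
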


\begin{proof}
  As $D$ and $\ev_\emptyset$ are each exact it will suffice to show that
  the kernel of $D \times \ev_\emptyset$ consists of only the zero object.
  Let $F$ be an object of $\PolyFun$ with
  $D(F) = 0$ and $F(\emptyset)=0$. We will show that $F=0$.
  Using the assumption that $D(F)=0$ we learn that
  $F(V) \to F(S^0 \vee V)$ is an isomorphism for every $V$.
  In particular, using that $F(\emptyset)=0$ and $F$ commutes with filtered colimits
  it now follows that $F(V) = 0$ for all $V$. 
\end{proof}

The key structural result we will need about polynomial functors on $\Set_*^\ainj$ is the following:

\begin{proposition} \label{cor:polyfun-artinian}
  The category of polynomial functors
  $\PolyFun$
  is noetherian and artinian.
\end{proposition}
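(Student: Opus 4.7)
The plan is to induct on polynomial degree. Since every $F \in \PolyFun$ lies in some $\PolyFund{d}$, and subobjects of objects of $\PolyFund{d}$ remain in $\PolyFund{d}$ (this is implicit in the proof of \Cref{lem:poly-closure}, using exactness of $D$), it suffices to show that each $\PolyFund{d}$ is noetherian and artinian as an abelian category. For the base case $d = 0$, I claim every $F \in \PolyFund{0}$ is canonically constant with finite-dimensional value: the vanishing $D(F) = 0$ combined with the splitting $F(X \vee S^0) \cong F(X) \oplus D(F)(X)$ makes each inclusion-induced map $F(X) \to F(X \vee S^0)$ an isomorphism, and iterating together with preservation of filtered colimits gives $F(X) \cong F(\emptyset)$ naturally in $X$. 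This identifies $\PolyFund{0}$ with the category of finite-dimensional $k$-vector spaces, which is trivially both noetherian and artinian.

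For the inductive step, assume the claim for $\PolyFund{d-1}$, let $F \in \PolyFund{d}$, and consider an ascending chain $F_1 \subseteq F_2 \subseteq \cdots \subseteq F$ (the descending case is dual). Applying the exact functors $D$ and $\ev_\emptyset$ yields a chain $D(F_\bullet) \subseteq D(F)$ in $\PolyFund{d-1}$, which stabilizes by the inductive hypothesis, and a chain $F_\bullet(\emptyset) \subseteq F(\emptyset)$ inside a finite-dimensional vector space, which stabilizes for elementary reasons. Choose $N$ so that both have stabilized from index $N$ onward. Then for each $i \geq N$ the successive quotient $F_{i+1}/F_i$ is polynomial (\Cref{lem:poly-closure}) and satisfies both $D(F_{i+1}/F_i) = 0$ and $(F_{i+1}/F_i)(\emptyset) = 0$ by exactness of $D$ and $\ev_\emptyset$; invoking the conservativity statement \Cref{lem:der-conservative} forces $F_{i+1}/F_i = 0$, so $F_i = F_{i+1}$ as desired.

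I do not anticipate a serious obstacle here: the entire argument is a two-variable noetherian induction, with $D$ reducing polynomial degree and $\ev_\emptyset$ bounding things inside a fixed finite-dimensional vector space, glued together by conservativity of their product. The only point where care is needed is to apply \Cref{lem:der-conservative} to the successive subquotients rather than directly to the $F_i$, which is precisely why exactness of both $D$ and $\ev_\emptyset$ is essential to the induction.
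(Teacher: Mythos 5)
Your proof is correct and takes essentially the same approach as the paper: induction on polynomial degree, using exactness and conservativity of $D \times \ev_\emptyset$ (\Cref{lem:der-conservative}) to pass chains of subobjects down to $\PolyFund{d-1} \times \Mod_k^{\omega,\heartsuit}$. The only differences are cosmetic — the paper starts the induction at $d=-1$ (the zero category) rather than $d=0$, and invokes the abstract principle that exact, conservative functors induce inclusions on subobject lattices, which your chain-stabilization-plus-successive-quotients argument simply unpacks.
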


\begin{proof}
  We will show that the subcategories
  $\PolyFund{d}$
  are noetherian and artinian by induction on $d$.
  As these subcategories exhaust $\PolyFun$
  and are closed under passing to subobjects this will suffice.
  The base-case $d=-1$ asserts that the zero category is noetherian and artinian.
  For the inductive step we note that
  the functor from \Cref{lem:der-conservative} restricts to an exact, conservative functor
  \[ D \times \ev_\emptyset : \PolyFund{d} \to \PolyFund{d-1} \times \Mod_k^{\omega,\heartsuit}. \]
  Exact, conservative functors induce inclusions on lattices of subobjects.
  The desired conclusion now follows from the fact that $\Mod_k^{\omega,\heart}$ is noetherian and artinian.
\end{proof}

\section{Pro-objects}
\label{sec:pro}

For a category $\scr C$ with finite limits, we denote by $\Pro(\scr C)$ the category of pro-objects in $\scr C$.
See \cite[\S3.1]{lurie2011derived} for a definition when $\scr C$ is accessible.
If $\scr C$ is in addition small, then by \cite[Example 3.1.2]{lurie2011derived} we have \[ \Pro(\scr C) \cong \Ind(\scr C^\op)^\op, \] and this is the form in which we will mainly work.
Beware that if $\scr C$ is large, then formation of $\Pro(\scr C)$ will require us to extend the universe.
In particular, if $\scr C$ is presentable, then $\Pro(\scr C)$ usually is not.

We think of objects of $\Pro(\scr C)$ as cofiltered families of objects of $\scr C$.
Finite limits and colimits in $\Pro(\scr C)$ are computed ``indexwise'' \cite[\S2.1]{MR4637146}, whereas cofiltered limits are computed ``formally'' (essentially by construction).
If $\scr C$ is stable then so is $\Pro(\scr C)$ \cite[Lemma 2.5]{MR4637146}.
Taking an object of $\scr C$ to the constant family yields a functor $c: \scr C \to \Pro(\scr C)$.
Mapping spaces in $\scr C$ are computed in the classical way \cite[\S2.1]{MR4637146}; in particular $c$ is fully faithful.
The functor $c$ has a partially defined right adjoint $M$.
If $\scr C$ has cofiltered limits then $M$ is given by taking the limit of the family.\footnote{$M$ preserves limits and, $c$ being fully faithful, sends constant families to their limits.}

Suppose that $\scr C$ is symmetric monoidal.
Then $\Pro(\scr C)$ inherits a symmetric monoidal structure in which $\otimes$ commutes with cofiltered limits \cite[Remark 2.4.2.7, Proposition 4.8.1.10]{HA}.
If $\otimes$ in $\scr C$ commutes with finite limits in each variable (e.g. $\scr C$ stable and $\otimes$ commutes with finite colimits), then it follows that the tensor product in $\Pro(\scr C)$ has the same property.
Hence in this case the tensor product on $\Pro(\scr C)$ commutes with \emph{arbitrary} limits in each variable separately.
This is the main reason why we consider pro-objects.
For example, we get the following.

\begin{lemma} \label{lemm:cofree-pro}
  Suppose that $\scr C$ is a symmetric monoidal category with finite limits, in which $\otimes$ commutes with finite limits in each variable.
  The forgetful functor
  $U : \cCAlg(\Pro(\scr C)) \to \Pro(\scr C)$ has a right adjoint $C$
  and there is a natural isomorphism   
  \[ UC(V) \cong \prod_{n \ge 0} (V^{\otimes n})^{h \Sigma_n}. \]
  Moreover the forgetful functor preserves cosifted limits.
\end{lemma}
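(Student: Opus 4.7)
The plan is to construct $C$ by dualization. Unwinding the definition $\cCAlg(\Pro(\scr C)) = \CAlg(\Pro(\scr C)^{\op})^{\op}$, the functor $U$ is opposite to the forgetful functor $U' \colon \CAlg(\Pro(\scr C)^{\op}) \to \Pro(\scr C)^{\op}$, so producing the right adjoint $C$ amounts to producing a free commutative algebra functor $\mathrm{Sym}$ left adjoint to $U'$. The essential input is already in hand: by the paragraph preceding the lemma, $\otimes$ in $\Pro(\scr C)$ commutes with arbitrary limits in each variable, so dually $\otimes$ in $\Pro(\scr C)^{\op}$ commutes with arbitrary colimits in each variable.

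Reducing (via a universe enlargement if necessary) to the case where $\scr C$ is essentially small, $\Pro(\scr C)^{\op} \cong \Ind(\scr C^{\op})$ is presentable, and together with the previous step this makes $\Pro(\scr C)^{\op}$ presentably symmetric monoidal. The second bullet of \S\ref{sec:coalg}, coming from \cite[Corollary 3.2.3.5]{HA}, then applies to $\Pro(\scr C)$ to produce the desired right adjoint $C$.

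For the explicit formula, I would invoke the classical identification
\[ U'\,\mathrm{Sym}(W) \cong \bigsqcup_{n \geq 0} (W^{\otimes n})_{h\Sigma_n}, \]
valid in any presentably symmetric monoidal category (see e.g.\ \cite[Proposition 3.1.3.13]{HA}). Dualizing turns the coproduct into a product and homotopy orbits into homotopy fixed points, giving $UC(V) \cong \prod_n (V^{\otimes n})^{h\Sigma_n}$. For the final claim, $U'$ preserves sifted colimits by \cite[Proposition 3.2.3.1]{HA} (applicable since $\otimes$ in $\Pro(\scr C)^{\op}$ preserves all colimits), so dually $U$ preserves cosifted limits. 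I anticipate no serious obstacle: the real mathematical content is carried by the observation from the preceding paragraph that tensor in $\Pro(\scr C)$ commutes with all limits, and the remainder is formal dualization of the standard theory.
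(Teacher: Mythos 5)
Your proposal is correct and follows essentially the same route as the paper: pass to the opposite forgetful functor $\CAlg(\Pro(\scr C)^\op) \to \Pro(\scr C)^\op$, use that the tensor product on $\Pro(\scr C)$ commutes with all limits, and invoke Lurie's free-algebra results, dualizing the symmetric-algebra formula and the preservation of sifted colimits. The only (harmless) difference is that the paper obtains both existence and the formula in one step from \cite[Proposition 3.1.3.13]{HA} applied to $\Pro(\scr C)^\op$, so your presentability/universe-enlargement detour via \cite[Corollary 3.2.3.5]{HA} is not needed, though it is valid.
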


\begin{proof}
  For the first statement, apply \cite[Proposition 3.1.3.13]{HA} to the opposite forgetful functor
  \[ \cCAlg(\Pro(\scr C))^\op \cong \CAlg(\Pro(\scr C)^\op) \to \Pro(\scr C)^\op. \]  
  The second statement follows from \cite[Proposition 5.4.7.11]{HTT} (applied to the same functor).
\end{proof}

\begin{remark}
  Observe the meaning of the formula for $C(V)$: the infinite product is computed as a formal $\N$-indexed inverse limit of the finite products, which are computed indexwise.
  Similarly $(\ph)^{h\Sigma_n}$ is computed as a formal $\N$-indexed inverse limit of finite limits which are computed indexwise, coming from a filtration of $B\Sigma_n$ by finite skeleta.
\end{remark}

\begin{remark} \label{rmk:pres-cofree}
  One pleasant consequence of Lemma \ref{lemm:cofree-pro} is the following:
  if $\scr C \to \scr D$ is a symmetric monoidal functor preserving finite limits,
  then $\Pro(\scr C) \to \Pro(\scr D)$ preserves cofree coalgebras.
\end{remark}

Still assuming $\scr C$ symmetric monoidal, the constant pro-object functor is also symmetric monoidal.
It hence induces
\[ c: \cCAlg(\scr C) \to \cCAlg(\Pro(\scr C)). \]

\begin{definition}
  We denote its partially defined right adjoint by $M^{\mathrm{cA}}$.
\end{definition}

A comparison of universal properties shows that:

\begin{lemma} \label{ex:M-cofree}
  Assuming that $\scr C$ is presentable with accessible tensor product,
  then $M^{\mathrm{cA}} C(cV)$ exists for any $V \in \scr C$ and is given by $C(V)$.
\end{lemma}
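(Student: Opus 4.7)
The plan is to verify the claim by directly exhibiting $C(V)$ as corepresenting the appropriate mapping space, thereby simultaneously establishing both the existence of $M^{\mathrm{cA}} C(cV)$ and computing its value. Since $M^{\mathrm{cA}}$ is only partially defined, its value at an object $X$ exists precisely when the presheaf $A \mapsto \Map_{\cCAlg(\Pro(\scr C))}(cA, X)$ on $\cCAlg(\scr C)$ is representable, so everything reduces to this representability check applied to $X = C(cV)$.

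The computation proceeds by chaining together three adjunctions and one full faithfulness. For any $A \in \cCAlg(\scr C)$, I first apply the cofree coalgebra adjunction in $\Pro(\scr C)$ (which exists by \Cref{lemm:cofree-pro}, using that $\scr C$ stably symmetric monoidal with accessible tensor product implies the same for $\Pro(\scr C)$ after extending the universe):
\[ \Map_{\cCAlg(\Pro(\scr C))}(cA, C(cV)) \cong \Map_{\Pro(\scr C)}(U_{\Pro(\scr C)}(cA), cV). \]
Next, because the constant pro-object functor $c: \scr C \to \Pro(\scr C)$ is symmetric monoidal, its lift to coalgebras commutes with the forgetful functors, i.e. $U_{\Pro(\scr C)} \circ c \cong c \circ U_{\scr C}$. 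So the right-hand side becomes $\Map_{\Pro(\scr C)}(c U_{\scr C}(A), cV)$. Now using that $c: \scr C \to \Pro(\scr C)$ is fully faithful, this is $\Map_{\scr C}(U_{\scr C}(A), V)$, which by the cofree adjunction in $\scr C$ (available under our hypotheses on $\scr C$, as recalled in \S\ref{sec:coalg}) is $\Map_{\cCAlg(\scr C)}(A, C(V))$.

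Stringing these natural isomorphisms together gives, for every $A \in \cCAlg(\scr C)$, a natural equivalence
\[ \Map_{\cCAlg(\Pro(\scr C))}(cA, C(cV)) \cong \Map_{\cCAlg(\scr C)}(A, C(V)). \]
By the Yoneda lemma this identifies $C(V)$ as the value of the partial right adjoint $M^{\mathrm{cA}}$ at $C(cV)$, proving existence and giving the claimed formula. The only point requiring any care is the compatibility $U_{\Pro(\scr C)} \circ c \cong c \circ U_{\scr C}$; this is standard and follows from the fact that $c$ is symmetric monoidal and that forgetful functors from coalgebras are obtained by applying $\mathrm{CAlg}$ (in the opposite) to the underlying functor, but it is the only non-formal input, so I would expect it to be the main point to spell out.
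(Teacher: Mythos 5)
Your proof is correct and is precisely the argument the paper intends: the paper's entire justification is the phrase ``a comparison of universal properties,'' and your chain of equivalences (cofree adjunction in $\Pro(\scr C)$ via \Cref{lemm:cofree-pro}, the compatibility $U_{\Pro(\scr C)} \circ c \cong c \circ U_{\scr C}$ coming from $c$ being symmetric monoidal, full faithfulness of $c$, and the cofree adjunction in $\scr C$) is exactly that comparison, spelled out. The only blemish is the parenthetical justification for invoking \Cref{lemm:cofree-pro}, whose hypotheses concern $\scr C$ itself (finite limits and $\otimes$ commuting with them, which hold since $\scr C$ is stably symmetric monoidal), not a transferred structure on $\Pro(\scr C)$; this does not affect the argument.
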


Beware that even if $M^{\mathrm{cA}}(E)$ exists, its underlying object in $\scr C$ need not coincide with $M(E)$.
For example, given a $V \in \scr C$ the cofree coalgebra $C(V)$ is usually \emph{not} given by the expected formula (see Warning \ref{warning:cofree}), and hence $M^{\mathrm{cA}}(CcV) \not\cong M(CcV)$.

\begin{remark} \label{rmk:M-McA-triv}
Let $E \in \cCAlg(\Pro(\scr C))$ such that the underlying object in $\Pro(\scr C)$ is constant, that is, in the essential image of $c$.
In this case it is clear that $E$ lies in the essential image of the fully faithful functor $c: \cCAlg(\scr C) \to \cCAlg(\Pro(\scr C))$ and hence $M^{\mathrm{cA}}(E) \wequi ME$.
\NB{say more?}
\end{remark}

We shall need a slightly more sophisticated criterion to know when $M^{\mathrm{cA}}$ coincides with $M$.
To explain it, we assume that $\scr C$ is stable and provided with a $t$-structure.

\begin{construction}
  Let $\scr C$ be a stable category with a $t$-structure.
  We equip $\Pro(\scr C)$ with the corresponding $t$-structure.
  An object of $\{X_\lambda\} \in \Pro(\scr C)$ is called \emph{pro-truncated} if each $X_\lambda \in \scr C$ is bounded above.
  Denote by $\widehat{\Pro}(\scr C) \subset \Pro(\scr C)$ the full subcategory on pro-truncated objects.

The inclusion of pro-truncated objects admits a left adjoint $\tau_{<\infty}$ called \emph{pro-truncation} which commutes with cofiltered limits
(see \cite[\S4.1.2]{barwick2018exodromy})
\[ \begin{tikzcd}[sep=huge]
  \widehat{\Pro}(\scr C) \ar[r, bend right, hook] & 
  \Pro(\scr C). \ar[l, bend right, "\tau_{<\infty}"'] 
\end{tikzcd} \]
An explicit formula for pro-truncation is given by sending $\{X_\lambda\}_{\lambda \in \Lambda}$ to $\{\tau_{\le n} X_\lambda\}_{(\lambda, n) \in \Lambda \times \N}$.
We say that $E \in \Pro(\scr C)$ is \emph{pro-constant up to pro-truncation} if the canonical map \[ \tau_{<\infty} cM(E) \to \tau_{<\infty} E \] is an equivalence.\NB{in reasonable cases this just means that $\tau_{<n} E$ is constant for all $n$}
\end{construction}

\begin{construction} \label{cnstr:tensor-pro-trunc}
  Let $\scr C$ be a stably symmetric monoidal category equipped with a $t$-structure for which
  a tensor product of bounded above objects is bounded above.
  In this situation the subcategory $\widehat{\Pro}(\scr C)$ is closed under tensor products.
  This allows us to equip $\widehat{\Pro}(\scr C)$ with a symmetric monoidal structure so that the inclusion
  $ \widehat{\Pro}(\scr C) \to \Pro(\scr C) $
  is symmetric monoidal.    
  The left adjoint $\tau_{<\infty}$ is then oplax monoidal.
\end{construction}

\begin{lemma} \label{lem:pro-trunc-sym}
  Let $\scr C$ be a stably symmetric monoidal category equipped with a $t$-structure for which
  \begin{enumerate}
  \item a tensor product of bounded above objects is bounded above,
  \item connectivity is additive under tensor products in $\scr C$ and
  \item the unit of $\scr C$ is bounded.
  \end{enumerate}
  The restriction of the oplax symmetric monoidal functor $\tau_{< \infty}$ from \Cref{cnstr:tensor-pro-trunc}
  to pro-(bounded below) objects is symmetric monoidal.
\end{lemma}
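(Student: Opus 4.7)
The plan is to verify that the oplax symmetric monoidal functor $\tau_{<\infty}$ from \Cref{cnstr:tensor-pro-trunc} becomes strong symmetric monoidal on pro-(bounded below) objects by checking that both its unit and binary coherence maps are equivalences there.

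The unit comparison is handled directly by assumption (3). Since the inclusion $\widehat{\Pro}(\scr C) \hookrightarrow \Pro(\scr C)$ is symmetric monoidal, the unit of $\widehat{\Pro}(\scr C)$ is the constant pro-object $c \mathbf{1}_{\scr C}$. By (3), $\mathbf{1}_{\scr C}$ is bounded, in particular bounded above, so $c\mathbf{1}_{\scr C}$ is already pro-truncated and $\tau_{<\infty}$ acts as the identity on it.

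The binary comparison map $\tau_{<\infty}(X \otimes Y) \to \tau_{<\infty}(X) \otimes \tau_{<\infty}(Y)$ is the substantial content. Since $\tau_{<\infty}$ and the tensor product both commute with shifts, and connectivity is additive under tensor (assumption (2)), one first reduces to the case that $X, Y$ admit pro-representations $\{X_\lambda\}, \{Y_\mu\}$ with every $X_\lambda, Y_\mu$ connective. The key pointwise computation is then: for connective $A, B \in \scr C$ and every $n \ge 0$, the natural map
\[ \tau_{\le n}(A \otimes B) \to \tau_{\le n}(\tau_{\le n}A \otimes \tau_{\le n}B) \]
is an equivalence. Indeed, factoring the underlying map through $\tau_{\le n}A \otimes B$, the two successive fibers are $\tau_{\ge n+1}A \otimes B$ and $\tau_{\le n}A \otimes \tau_{\ge n+1}B$, each of which is $(n+1)$-connective by (2) (using that $B$ and $\tau_{\le n}A$ are connective). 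Hence the total fiber is $(n+1)$-connective and killed by $\tau_{\le n}$.

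To pass from the pointwise statement to the pro-level comparison, both sides are manifestly pro-truncated, so by Yoneda it suffices to check the map induces an equivalence after mapping into an arbitrary $N$-truncated $Z \in \scr C$. Using the explicit formula $\tau_{<\infty}\{A_\alpha\} = \{\tau_{\le m}A_\alpha\}_{(\alpha, m)}$, the componentwise tensor product of pro-objects, and the fact that $\Map_{\scr C}(A, Z) \simeq \Map_{\scr C}(\tau_{\le N} A, Z)$, both mapping spaces collapse to the same filtered colimit $\colim_{\lambda, \mu} \Map_{\scr C}(\tau_{\le N}X_\lambda \otimes \tau_{\le N}Y_\mu, Z)$ by the pointwise equivalence above. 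I expect the main obstacle to be the careful bookkeeping for the reduction to uniformly connective pro-representations (using the meaning of ``pro-(bounded below)'' and the fact that shifts of pro-truncations and tensor products behave compatibly); once that reduction is in place, the pointwise calculation is a few lines and the Yoneda step is formal.
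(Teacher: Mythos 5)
Your overall strategy coincides with the paper's: the unit is handled by assumption (3), the heart of the argument is the pointwise estimate that for connective $A,B$ the map $\tau_{\le n}(A \otimes B) \to \tau_{\le n}(\tau_{\le n}A \otimes \tau_{\le n}B)$ is an equivalence (your two-step factorization through $\tau_{\le n}A \otimes B$ is exactly the paper's use of assumption (2)), and one then passes to the pro-level. Your Yoneda step is also sound: pro-truncated objects form the full subcategory $\widehat{\Pro}(\scr C) \subset \Pro(\scr C)$ equivalent to pro-objects in bounded above objects, where maps into constant truncated objects detect equivalences.

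The gap is precisely the step you deferred as ``bookkeeping'': the reduction to pro-representations in which \emph{every} $X_\lambda, Y_\mu$ is connective. In this paper ``pro-(bounded below)'' is a termwise condition --- each $X_\lambda$ is bounded below, with no uniform bound --- and a uniform bound genuinely need not exist, so no single shift of $X$ admits an all-connective representation. Concretely, in $\Pro(\Mod_k)$ take $X_n = \bigoplus_{j=-n}^{0}\Sigma^{j}k$ with the evident projections as transition maps: each term is bounded below, but the homotopy pro-group $\pi_{-j}X$ is the nonzero pro-constant object $k$ for every $j \ge 0$, whereas any pro-object with a uniformly bounded-below representation has $\pi_{-j} = 0$ as a pro-object for $j \gg 0$. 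So the reduction, as stated, is impossible, and your proof as written only covers uniformly bounded-below objects.

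Fortunately the gap is local and your own machinery repairs it: drop the uniformity and run the Yoneda step with per-index bounds. If $X_\lambda$ is $a_\lambda$-connective and $Y_\mu$ is $b_\mu$-connective, the same fiber argument shows $\tau_{\le N}(X_\lambda \otimes Y_\mu) \to \tau_{\le N}(\tau_{\le m}X_\lambda \otimes \tau_{\le l}Y_\mu)$ is an equivalence once $m \ge N - b_\mu$ and $l \ge N - a_\lambda$; since for fixed $(\lambda,\mu)$ the truncation indices range over a filtered system, both mapping spaces into an $N$-truncated constant $Z$ still collapse to $\colim_{\lambda,\mu}\Map_{\scr C}(X_\lambda \otimes Y_\mu, Z)$. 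The paper sidesteps the issue by reducing in a different order: since $\tau_{<\infty}$ and $\otimes$ commute with cofiltered limits in $\Pro(\scr C)$, the comparison map for $X \otimes Y$ is the cofiltered limit of the comparison maps for the constant pro-objects $cX_\lambda \otimes cY_\mu$, and each constant object can be shifted to a connective one \emph{individually}, so no uniformity is ever needed.
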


\begin{proof}
  The third assumption ensures that $\tau_{<\infty}$ is strictly unital.
  What remains is to check that maps
  \[ \tau_{< \infty}(X \otimes Y) \to (\tau_{< \infty}X) \otimes (\tau_{<\infty} Y) \]
  provided by the oplax symmetric monoidal structure on $\tau_{<\infty}$
  are isomorphisms for $X,Y$ pro-(bounded below).
  As $\tau_{< \infty}$ and $- \otimes -$ commute with cofiltered limits and every
  pro-(bounded below) $X$ can be written as a cofiltered limit $\varprojlim c X_\lambda$ with the $X_\lambda$ bounded below
  it will suffice to analyze the maps
  \[  \tau_{< \infty}(c A \otimes c B) \to (\tau_{< \infty} c A) \otimes (\tau_{<\infty} c B)  \]
  when $A,B \in \scr C$ are connective.

  Using our assumption that connectivity is additive under tensor products we have that
  \[ \tau_{\leq k}(A \otimes B) \to \tau_{\leq k}( \tau_{\leq n}A \otimes \tau_{\leq m}B ) \]
  is an isomorphism for $k \leq n,m$.
  Taking the limit over $k,n,m \to \infty$ the desired conclusion follows.
\end{proof}

\begin{lemma} \label{lemm:coalg-M}
  Let $\scr C$ be as in \Cref{lem:pro-trunc-sym}.
  Additionally we assume that $\scr C$ has cofiltered limits and the $t$-structure on $\scr C$ is left complete.
  If $E \in \cCAlg(\Pro(\scr C))$ is pro-(bounded below) and pro-constant up to pro-truncation,
  then $M^{\mathrm{cA}}(E)$ exists and coincides with $M(E)$.
\end{lemma}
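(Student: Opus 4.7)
The strategy is to construct $\tilde E \in \cCAlg(\scr C)$ with underlying object $M(E)$ via descent from the Postnikov tower of $E$, and then verify it has the universal property of $M^{\mathrm{cA}}(E)$. Since $c: \scr C \to \Pro(\scr C)$ is a fully faithful symmetric monoidal functor, the induced $c: \cCAlg(\scr C) \to \cCAlg(\Pro(\scr C))$ is also fully faithful; this is what makes the descent step meaningful.

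First I would unpack the pro-constancy hypothesis. Applying termwise $\tau_{\le n}$ to the equivalence $\tau_{<\infty} cM(E) \wequi \tau_{<\infty} E$ and using both $\tau_{\le n}\tau_{<\infty} = \tau_{\le n}$ and that $\tau_{\le n}$ sends constant pro-objects to constant pro-objects, I obtain for each $n \ge 0$ an equivalence $c\tau_{\le n} M(E) \wequi \tau_{\le n} E$ in $\Pro(\scr C)$. Termwise truncation $\tau_{\le n}\colon \Pro(\scr C) \to \Pro(\scr C_{\le n})$ is a left adjoint to a lax symmetric monoidal inclusion (against the $\tau_{\le n}$-truncated tensor product on the target), hence $\tau_{\le n}$ is itself oplax symmetric monoidal and preserves coalgebras. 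Therefore $\tau_{\le n} E \in \cCAlg(\Pro(\scr C_{\le n}))$ has constant underlying pro-object; by full faithfulness of the symmetric monoidal $c\colon \scr C_{\le n} \to \Pro(\scr C_{\le n})$ (whose essential image on coalgebras is exactly those with constant underlying pro-object), this descends to $\tilde E_n \in \cCAlg(\scr C_{\le n})$ with underlying object $\tau_{\le n} M(E)$, naturally in $n$.

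Left completeness of the $t$-structure on $\scr C$ identifies $M(E)$ with $\lim_n \tau_{\le n} M(E)$, and since $U\colon \cCAlg(\scr C) \to \scr C$ preserves cosifted ($\N^{\op}$-indexed) limits, I obtain $\tilde E \coloneqq \lim_n \tilde E_n \in \cCAlg(\scr C)$ with underlying object $M(E)$.

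To verify the universal property, I would compute, for arbitrary $A \in \cCAlg(\scr C)$, that the limit presentation of $\tilde E$ combined with full faithfulness of $c$ on each truncated piece yields $\Map_{\cCAlg(\scr C)}(A, \tilde E) \wequi \lim_n \Map_{\cCAlg(\Pro(\scr C))}(cA, \tau_{\le n} E)$. The final identification of this limit with $\Map_{\cCAlg(\Pro(\scr C))}(cA, E)$ is where the pro-(bounded below) hypothesis on $E$ genuinely enters, via the comonadic resolution of $E$ and \Cref{ex:M-cofree} applied to its cofree terms (so that the cosimplicial descent of $M^{\mathrm{cA}}$ through the resolution is controlled). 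I expect this final identification to be the main obstacle: it requires careful bookkeeping of the interplay between pro-truncation, the modified monoidal structure on $\widehat{\Pro}(\scr C)$, and the cofree coalgebra adjunction through the comonadic resolution.
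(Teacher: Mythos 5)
Your proposal breaks down at its central step, in two ways. First, the assertion that $U \colon \cCAlg(\scr C) \to \scr C$ preserves cosifted ($\N^{\op}$-indexed) limits is false, and its failure is precisely what makes this lemma (and the paper's whole pro-object apparatus) necessary. For $\scr C = \Mod_k$ the tensor product does not commute with cofiltered limits --- it already fails to commute with countable products, which are $\N^{\op}$-limits of finite products --- so the forgetful functor from coalgebras does not preserve such limits; this is the content of \Cref{warning:cofree} and of the discussion following \Cref{ex:M-cofree}, which warns that the underlying object of $M^{\mathrm{cA}}(E)$ need not be $M(E)$ even when both exist. The preservation statement you want is true for $U \colon \cCAlg(\Pro(\scr C)) \to \Pro(\scr C)$ (\Cref{lemm:cofree-pro}), exactly because the tensor product on $\Pro(\scr C)$ commutes with arbitrary limits; that is the entire reason the lemma is formulated at the pro-level. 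Nor can you retreat to the weaker claim that $U$ preserves this particular Postnikov limit: the paper's criterion for computing coalgebra limits on underlying objects is \Cref{corr:proconst-limits}, whose proof invokes the very lemma you are proving, so that appeal would be circular.

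Second, there is a structural defect even before that point: your objects $\tilde E_n$ are coalgebras in $\scr C_{\le n}$ equipped with the \emph{truncated} tensor product $\tau_{\le n}(- \otimes -)$. The inclusion $\scr C_{\le n} \hookrightarrow \scr C$ is only lax symmetric monoidal (being right adjoint to the symmetric monoidal truncation), and lax symmetric monoidal functors do not carry coalgebras to coalgebras: a comultiplication $\tilde E_n \to \tau_{\le n}(\tilde E_n \otimes \tilde E_n)$ does not lift to one valued in $\tilde E_n \otimes \tilde E_n$. Hence $\{\tilde E_n\}_n$ is not a diagram in $\cCAlg(\scr C)$ and the expression $\lim_n \tilde E_n \in \cCAlg(\scr C)$ does not typecheck. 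This is exactly what the paper's proof is engineered to avoid: it never forms any limit of coalgebras in $\scr C$. Instead it works inside $\widehat{\Pro}(\scr C)$, which by hypothesis (1) of \Cref{lem:pro-trunc-sym} is a genuine full symmetric monoidal subcategory of $\Pro(\scr C)$ closed under the ambient tensor product (so no localized tensor product ever appears), proves that $\widehat{c} = \tau_{<\infty} \circ c$ is fully faithful and symmetric monoidal on bounded below objects, checks via a retract argument that $M(E)$ is bounded below (a point any repair of your argument would also need --- e.g.\ so that $M(E) \otimes M(E)$ is complete --- but which you never establish), and then descends the coalgebra structure in a single step along the fully faithful symmetric monoidal functor $\widehat{c}^{\mathrm{cA}}$, after which the identification $M^{\mathrm{cA}}(E) \cong M(E)$ falls out of the adjunctions. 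Finally, you explicitly leave the verification of the universal property as ``the main obstacle''; in the paper's route that step is immediate from full faithfulness, which is further evidence that the truncation-by-truncation decomposition is the wrong cut for this statement.
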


\begin{proof}
  Let $\widehat{c}$ be the restriction of $\tau_{< \infty} \circ c$ to bounded below objects in $\scr C$.
  The assumptions on $\scr C$ ensure that this functor is fully faithful 
  and provide a symmetric monoidal structure on it (\Cref{lem:pro-trunc-sym}).  
  As a consequence the induced functor
  $ \widehat{c}^{\mathrm{cA}} : \cCAlg(\scr C^{+}) \to \cCAlg(\widehat{\Pro}(\scr C)) $
  is also fully faithful, where $\scr C^+ \subset \scr C$ denotes the subcategory of bounded below objects.

  Together the conditions that $E$ be pro-constant up to pro-truncation and pro-(bounded below) imply that $ME$ is bounded below.
  (The inverse of $\tau_{<\infty} c(ME) \to \tau_{<\infty} E$ yields a map $\tau_{<n} E_\lambda \to \tau_{<0} ME$ for some $n, \lambda \gg 0$ such that the composite $\tau_{<n} ME \to \tau_{<n} E_\lambda \to \tau_{<0} ME$ is the canonical truncation. Since $E_\lambda$ is bounded below, so is $ME$, $\tau_{<0} ME$ being a retract of $\tau_{<0} E_\lambda$.)
  This means that $E$ is in the image of $\widehat{c}^{\mathrm{cA}}$.  
  As $M^{\mathrm{cA}}$ is the (partially defined) right adjoint to
  the fully faithful functor $\widehat{c}^{\mathrm{cA}}$
  and $M$ is the right adjoint to
  the fully faithful functor $\widehat{c}$
  it follows that $M^{\mathrm{cA}}(E)$ exists and its underlying object
  may be indentified with $M(E)$.  
\end{proof}

\begin{corollary} \label{corr:proconst-limits}
  Let $\scr C$ as in Lemma \ref{lemm:coalg-M}.
  Let $E_\bullet \in \cCAlg(\scr C)$ denote a cosifted diagram.
  Assume that the underlying cofiltered diagram in $\scr C$ is pro-constant up to pro-truncation and pro-(bounded below).
  Then the limit of $E_\bullet$ in $\cCAlg(\scr C)$ exists and coincides with the limit in $\scr C$.
\end{corollary}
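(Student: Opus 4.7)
My plan is to pass the whole diagram to $\cCAlg(\Pro(\scr C))$, form the limit there (where it is automatic), and then use Lemma \ref{lemm:coalg-M} to ``materialize'' it back to $\cCAlg(\scr C)$.

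First I would form $L := \lim c E_\bullet$ in $\cCAlg(\Pro(\scr C))$; after enlarging universes if necessary, this category is presentable, so the limit exists. The key point is that, by Lemma \ref{lemm:cofree-pro}, the forgetful functor $U: \cCAlg(\Pro(\scr C)) \to \Pro(\scr C)$ preserves cosifted limits. Consequently $UL$ is the cosifted limit of $cUE_\bullet$ in $\Pro(\scr C)$, which is precisely the ``underlying cofiltered diagram'' figuring in the hypothesis. Thus by assumption $UL$ is pro-(bounded below) and pro-constant up to pro-truncation.

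Next I would invoke Lemma \ref{lemm:coalg-M}: because $\scr C$ satisfies the standing hypotheses and $L$ has underlying object with these two pro-properties, $M^{\mathrm{cA}}(L)$ exists and its underlying object is $M(UL)$. By the adjunction $c \dashv M^{\mathrm{cA}}$ together with the fullness of $c$,
\[ \Map_{\cCAlg(\scr C)}(A, M^{\mathrm{cA}}(L)) \;\cong\; \Map_{\cCAlg(\Pro(\scr C))}(cA, L) \;\cong\; \lim \Map_{\cCAlg(\scr C)}(A, E_\bullet), \]
so $M^{\mathrm{cA}}(L)$ realizes the desired limit of $E_\bullet$ in $\cCAlg(\scr C)$. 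Since $M$ is a right adjoint it preserves limits, hence $M(UL) = \lim M(cUE_\bullet) = \lim UE_\bullet$ in $\scr C$ (using that $M \circ c \simeq \mathrm{id}$ on $\scr C$). This identifies the underlying object of $M^{\mathrm{cA}}(L)$ with the limit in $\scr C$, which is exactly the conclusion.

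I expect the only real subtlety to be unpacking the hypothesis ``the underlying cofiltered diagram is pro-(bounded below) and pro-constant up to pro-truncation'' as the statement that $UL \in \Pro(\scr C)$ has these two properties; once that identification is made via Lemma \ref{lemm:cofree-pro}, the rest is a formal application of Lemma \ref{lemm:coalg-M} plus adjunction bookkeeping.
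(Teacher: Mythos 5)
Your proposal is correct and follows essentially the same route as the paper: form $\lim cE_\bullet$ in $\cCAlg(\Pro(\scr C))$, note via Lemma \ref{lemm:cofree-pro} that this limit is computed on underlying pro-objects so the hypotheses apply to it, then use Lemma \ref{lemm:coalg-M} and the adjunction $c \dashv M^{\mathrm{cA}}$ to identify $M^{\mathrm{cA}}$ of that limit with the limit of $E_\bullet$ in $\cCAlg(\scr C)$ and its underlying object with $\lim UE_\bullet$. (The only cosmetic slip is the justification for the existence of $L$: $\Pro(\scr C)$ is generally not presentable even after enlarging universes, as the paper warns; rather, limits in $\cCAlg(\Pro(\scr C))$ exist because its \emph{opposite} category $\CAlg(\Pro(\scr C)^{\op})$ is presentable, or more directly because cosifted limits are created on underlying pro-objects.)
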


\begin{proof}
We have $\lim E_\bullet \cong M^{\mathrm{cA}} \lim cE_\bullet$.
The latter limit may be computed on underlying pro-objects (see Lemma \ref{lemm:cofree-pro}), and hence by assumption is pro-constant up to pro-truncation and pro-(bounded below).
Consequently we may apply \Cref{lemm:coalg-M} and see that $UM^{\mathrm{cA}} \lim cE_\bullet \wequi M U\lim cE_\bullet \wequi \lim UE_\bullet$, as desired.
\end{proof}

The following criterion helps us to detect objects which are pro-constant up to pro-truncation.
\begin{lemma} \label{lemm:detect-constant}
  Let $\scr C = \Mod_k$ where $k$ is a field.
  Let $\{X_\lambda\} \in \Pro(\Mod_k)$ be a pro-(bounded below) object such that
  each $\{\pi_i X_\lambda\}_{\lambda} \in \Pro(\Mod_k^\heart)$ is pro-constant, and is isomorphic to $0$ for $i<0$.
  Then $\{X_\lambda\}$ is connective and pro-constant up to pro-truncation.
\end{lemma}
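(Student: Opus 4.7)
My plan is a Postnikov-tower induction that exploits the semisimplicity of $\Mod_k$ over a field: every $k$-module splits as $\bigoplus_i \Sigma^i \pi_i(-)$ and maps between such objects are detected on homotopy groups.

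First I would handle the connectivity claim together with the base case of the induction below. Since each $X_\lambda$ is bounded below, only finitely many negative-degree homotopy groups are nonzero for any given $\lambda$. The hypothesis that $\{\pi_i X_\lambda\} \cong 0$ in $\Pro(\Mod_k^\heart)$ for $i<0$, combined with cofilteredness of the indexing diagram, lets me choose $\lambda' \le \lambda$ for which $\pi_i X_{\lambda'} \to \pi_i X_\lambda$ is zero for every $i<0$ simultaneously. By semisimplicity, the map $\tau_{<0}X_{\lambda'} \to \tau_{<0}X_\lambda$ is already zero in $\Mod_k$, so $\{\tau_{<0}X_\lambda\} \cong 0$ in $\Pro(\Mod_k)$. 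The cofiber sequence $\{\tau_{\ge 0}X_\lambda\} \to \{X_\lambda\} \to \{\tau_{<0}X_\lambda\}$ then exhibits $\{X_\lambda\}$ as pro-equivalent to a pro-diagram of connective $k$-modules, yielding connectivity.

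For pro-constancy up to pro-truncation, I would argue by induction on $n \ge 0$ that $\{\tau_{<n}X_\lambda\}$ is pro-constant. The base case $n=0$ is what was just established. For the step, assuming $\{\tau_{<n}X_\lambda\} \cong cA_n$, the Postnikov cofiber sequence
\[ \Sigma^n \pi_n X_\lambda \longrightarrow \tau_{<n+1}X_\lambda \longrightarrow \tau_{<n}X_\lambda \]
in $\Pro(\Mod_k)$ has pro-constant outer terms (the left using the hypothesis $cV_n \cong \{\pi_n X_\lambda\}$). Its classifying $k$-invariant is a map $cA_n \to c\Sigma^{n+1}V_n$ in $\Pro(\Mod_k)$ which, by full faithfulness of $c$, descends to a map in $\Mod_k$; the corresponding fiber $A_{n+1}$ satisfies $cA_{n+1} \cong \{\tau_{<n+1}X_\lambda\}$, completing the induction. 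Each $\{\tau_{<n}X_\lambda\}$ being pro-constant means precisely that $\tau_{<\infty}\{X_\lambda\}$ lies in the essential image of $\widehat{c}$, and the adjunction identifies the constant value at level $n$ with $M\{\tau_{<n}X_\lambda\} = \tau_{<n}M\{X_\lambda\}$ --- which is exactly the statement that the unit $\tau_{<\infty}cM\{X_\lambda\} \to \tau_{<\infty}\{X_\lambda\}$ is an equivalence.

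The main obstacle is descending the Postnikov $k$-invariants at each step from $\Pro(\Mod_k)$ back to $\Mod_k$; this works because one is mapping \emph{out of} a pro-constant object, where full faithfulness of $c$ applies. Semisimplicity over a field is what makes the base case vanish, ruling out Mittag--Leffler-type obstructions; without it, bounded maps need not be zero just because they vanish on homotopy groups.
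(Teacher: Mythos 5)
Your proof is correct, and it shares the paper's two essential ingredients: semisimplicity of $\Mod_k$ (a map vanishing on all homotopy groups is null, so a levelwise-bounded-below pro-object with vanishing homotopy pro-groups in a range can be killed in that range by passing up the cofiltered index), and a finite Postnikov induction to propagate pro-constancy through the truncations. Where you genuinely diverge is in how the constant model is produced. The paper takes the materialization $Y = M\{X_\lambda\} = \lim_\lambda X_\lambda$ as the candidate from the start, asserts that pro-constancy of the $\{\pi_i X_\lambda\}$ forces $\pi_i Y \cong \lim_\lambda \pi_i X_\lambda$, and then shows $c(\tau_{\le n}Y) \to \{\tau_{\le n}X_\lambda\}$ is a pro-isomorphism as an extension of finitely many isomorphisms of homotopy pro-objects. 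You instead build the constant values $A_n$ synthetically, descending the Postnikov $k$-invariants along the fully faithful functor $c$ and taking fibers in $\Mod_k$; this inductive step is purely formal (it uses only exactness and full faithfulness of $c$) and avoids any up-front claim about homotopy groups of cofiltered limits, which is a nice feature. The price is paid at the end: your identification $M\{\tau_{<n}X_\lambda\} \cong \tau_{<n}M\{X_\lambda\}$ is \emph{not} formal from the adjunction ($\tau_{<n}$ is a localization, $M$ is a limit, and these do not commute in general). It does hold here, but needs a short argument you should supply: each $X_\lambda$ is bounded below, hence $X_\lambda \cong \lim_n \tau_{<n}X_\lambda$, so after swapping the two limits one gets $M\{X_\lambda\} \cong \lim_n A_n$; and the tower $\{A_n\}_n$ has eventually constant homotopy groups ($\pi_i A_n \cong V_i$ for $i < n$), so the Milnor sequence for this countable tower gives $\tau_{<n}\lim_m A_m \cong A_n$. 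With that inserted, your conclusion that the canonical map $\tau_{<\infty}\,cM\{X_\lambda\} \to \tau_{<\infty}\{X_\lambda\}$ is an equivalence follows. This glossed point is the exact analogue of the step the paper itself leaves terse ($\pi_i Y \cong \lim \pi_i X$), so the two proofs need comparable amounts of limit bookkeeping --- yours just relocates it from the beginning of the comparison to the end.
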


\begin{proof}
  Replacing $X$ by $\tau_{<\infty} X$, we may assume that each $X_\lambda$ is bounded.
  
  First assume that all the homotopy pro-objects are $0$.
  We shall show $X=0$.
  To do this, let $\lambda \in \Lambda$.
  Since $\{\pi_i X_\mu\}_\mu$ is zero, there exists $\mu(i) > \lambda$ such that $\pi_i(X_{\mu(i)}) \to \pi_i(X_{\lambda})$ is the zero map.
  Since $X_\lambda$ is bounded, we may choose $\mu(i) = \mu$ independently of $i$.
  Now the map $X_\mu \to X_\lambda$ is null, since we are working with $k$-modules (which split into their homotopy objects).
  
  Now we treat the general case.
  We have the fiber sequence $\tau_{\geq 0}X \to X \to \tau_{<0}X$.
  The previous case applies to $\tau_{<0}X$, whence $\tau_{\geq 0}X \cong X$.
  Set $Y = \lim X$.
  Since each $\pi_i X$ is pro-constant, we see that $\pi_i Y \cong \lim \pi_i X$.
  From this it follows that $Y_{\le n} \to X_{\le n}$ is an isomorphism (being obtained as an extension of finitely many isomorphisms, one for each of the homotopy objects).
  This was to be shown.
\end{proof}

Finally, we record a criterion for recognizing when pro-objects in an abelian category are pro-constant.

\begin{lemma}\label{lemm:constancy}
  Let $\scr C$ be a artinian abelian category with enough projectives.
  Every countable pro-object $\{ F_{\lambda} \} \in \Pro_\omega(\scr C)$
  which admits a limit in $\scr C$ is pro-constant.
\end{lemma}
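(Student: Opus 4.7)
The plan is to prove that the canonical map $c(F) \to \{F_\lambda\}$, where $F := \lim F_\lambda$ is computed in $\scr C$, is an isomorphism in $\Pro(\scr C)$. I would proceed in three steps, each invoking a different part of the hypothesis.

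First, I would use the artinian hypothesis to replace $\{F_\lambda\}$ by a pro-equivalent system with epimorphic transition maps. For each index $n$, the descending chain $\{\mathrm{im}(F_m \to F_n)\}_{m \geq n}$ of subobjects of $F_n$ stabilizes; let $I_n \subseteq F_n$ denote the eventual image. Then $\{I_n\}$ has epimorphic transitions, the quotient pro-object $\{F_n/I_n\}$ is pro-zero (by construction), and consequently $\{I_n\} \cong \{F_n\}$ in $\Pro(\scr C)$ with the same limit $F$. Next, the kernels $K_n := \ker(F \to I_n)$ form a descending chain of subobjects of $F$, which stabilizes by the artinian hypothesis to some $K \subseteq F$. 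The defining property of the limit forces $K \to F$ to vanish (since $\Hom(K,F) = \lim \Hom(K, I_n) = 0$), so $K = 0$, and $F \hookrightarrow I_n$ is a monomorphism for all $n \geq N$.

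The heart of the argument is then to upgrade these injections to isomorphisms. Set $Q_n := I_n/F$ for $n \geq N$, so that the transitions $Q_{n+1} \twoheadrightarrow Q_n$ are epimorphisms. For any projective $P \in \scr C$, the vanishing of $\mathrm{Ext}^1(P,F)$ gives short exact sequences of countable inverse systems $0 \to \Hom(P, F) \to \Hom(P, I_n) \to \Hom(P, Q_n) \to 0$, whose middle term has epimorphic transitions (by projectivity of $P$) and is therefore Mittag--Leffler. The limit formula $\lim_n \Hom(P, I_n) = \Hom(P, F)$ then collapses the derived $\lim$ exact sequence and yields $\lim_n \Hom(P, Q_n) = 0$. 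Now if $Q_m \neq 0$ for some $m \geq N$, the enough-projectives hypothesis supplies a projective $P$ with an epimorphism $p : P \twoheadrightarrow Q_m$. Using projectivity of $P$ together with the epimorphic transitions $Q_{n+1} \twoheadrightarrow Q_n$, I would inductively lift $p$ to a compatible family $p_n : P \to Q_n$ for all $n \geq N$, thereby producing a nonzero element of $\lim_n \Hom(P, Q_n) = 0$, a contradiction. Hence $Q_n = 0$ for all $n \geq N$, which together with Step 1 finishes the proof.

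The most delicate point will be the twofold use of the epimorphism hypothesis on transitions in the final step: one needs epimorphic transitions both to guarantee Mittag--Leffler-ness (so that the limit sequence stays exact and forces $\lim_n \Hom(P, Q_n) = 0$) and to enable the inductive lifting that produces the contradicting compatible system. The other steps are essentially formal: a standard application of DCC on subobjects of $F_n$ and of $F$, plus basic pro-category bookkeeping.
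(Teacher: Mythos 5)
Your proof is correct and follows essentially the same route as the paper's: both pass to the eventual-image subsystem $\{I_n\}$ (using artinianness), kill the kernel via the universal property of the limit, and kill the cokernel by a Mittag--Leffler-type lifting argument with projectives over the countable tower of epimorphic transitions. The only difference is organizational --- you run the cokernel step as a contradiction via $\lim_n \Hom(P,Q_n)=0$ (which requires first knowing $F \to I_n$ is monic), whereas the paper directly shows that $c(\lim F) \to I_n$ is a levelwise epimorphism, so the two halves of its argument are independent.
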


\begin{proof}
  Recall that $\Pro(\scr C)$ is abelian \cite[Theorem 4.1(iv)]{staufer-completion}; in particular a map is an isomorphism if and only if it has vanishing kernel and cokernel (which may be computed indexwise).
  Let $\tilde F$ be the limit of $F$ viewed as a constant pro-object. We will consider the exact sequence of pro-objects
  \[ 0 \to K \to \tilde F \to F \to C \to 0 \]
  and we shall prove that $K = 0 = C$.
  As $K$ is a subobject of $\tilde F$ which is constant all of its transition maps are injective. As $\tilde F$ is artinian, this implies that $K$ is pro-constant and therefore since $\tilde F$ is the limit of $F$ we must have that $K$ is pro-isomorphic to $0$. Again using the fact that $\scr C$ is artinian we may form the pointwise sub-pro-object of eventual minimums of $F$ i.e. the pro-object $F'$ with $F'_\lambda = \cap_{\lambda' \to \lambda} \img(F(\lambda') \to F(\lambda))$. The quotient $\coker(F' \to F)$ is pro-isomorphic to zero (again using artinianness). Now we may lift the map $\tilde F \to F$ to a map $\tilde F \to F'$. Let $P$ be a projective object of $\scr C$. Then $\Map(P, F'_\lambda)$ is a countable pro-system of abelian groups along epimorphisms, and so it follows that $\Map(P, \tilde F) = \lim_\lambda \Map(P, F'_\lambda) \to \Map(P, F'_\lambda)$ is surjective for all $\lambda$. Since $\scr C$ has enough projectives and $P$ was arbitrary, this implies that $\tilde F \to F'_\lambda$ is epi for all $\lambda$. This verifies that we have isomorphisms of pro-objects $C \cong \coker(\tilde F \to F) \cong \coker(\tilde F \to F') \cong 0.$ This concludes the proof.
\end{proof}

\section{Idempotents in coalgebras}
\label{sec:idem}

In this section we establish some results about inverting idempotent elements in coalgebras.
These results will be used to reduce the proof of our main theorems to the connected case.

Throughout this section we work in an idempotent complete, stably symmetric monoidal category $\scr C$.

\begin{definition}
Let $A \in \cCAlg(\scr C)$.
By an \emph{element of $A$} we mean a homotopy class of maps $A \to \1$ in $\scr C$.
Using the comultiplication on $A$, we can multiply elements of $A$.
Similarly for any element $a$ of $A$ we obtain an endomorphism of the underlying object $A \in \scr C$ ``multiplication by $a$''.
We call an element \emph{idempotent} if it satisfies $e^2 = e$.
We denote the element specified by the co-unit of $A$ by $1=1_A$.
We call an element $x$ a \emph{unit} if there exists an element $y$ with $xy = 1$.
\end{definition}

\begin{remark}
It may be helpful to pass to $\scr C^\op$, and use that $\cCAlg(\scr C)^\op = \CAlg(\scr C^\op)$.
We can thus view $A \in \cCAlg(\scr C)$ as $A \in \CAlg(\scr C^\op)$, and elements of $A$ are just maps $\1 \to A \in \scr C^\op$.
\end{remark}

\begin{remark}
Note that the set of elements is contravariantly functorial: given $f: A \to B \in \cCAlg(\scr C)$ and an element $x$ of $B$, there is a canonical element $f^*(x) = x \circ f$ of $A$.
\end{remark}

\begin{example} \label{ex:tautological-idempotents}
Let $A, B \in \cCAlg(\scr C)$.
The coproduct $A \amalg B \in \cCAlg(\scr C)$ exists in $\cCAlg(\scr C)$; the underlying object is $A \oplus B$.
The composite $A \oplus B \xrightarrow{pr} A \xrightarrow{c_A} \1$, where $c_A$ is the co-unit of $A$, defines an element $e_A$ of $A \amalg B$ which is easily verified to be idempotent.
We similarly construct an idempotent $e_B$, and observe that $1_{A \amalg B} = e_A + e_B$.
\end{example}

\begin{definition}
Let $A \in \cCAlg(\scr C)$ and $a \in A$.
Let $\scr D \subset \cCAlg(\scr C)_{/A}$ denote the full subcategory on those objects $(f: B \to A)$ such that $f^*(a)$ is a unit of $B$.
We denote by $A[a^{-1}]$ the terminal object of $\scr D$, if it exists.
\end{definition}

\begin{remark} \label{rmk:def-localization-coalg}
In other words for $T \in \cCAlg(\scr C)$, $\Map(T, A[a^{-1}]) \to \Map(T, A)$ is a monomorphism onto those connected components corresponding to maps $f: T \to A$ with $f^*(a)$ a unit.
\end{remark}

Because of the usual difficulties with coalgebras, it is not clear if localizations $A[a^{-1}]$ exist. They do, however, exist whenever $a$ is idempotent.

\begin{lemma} \label{lemm:coalg-idempotents}
  Let $A \in \cCAlg(\scr C)$ and $e$ an idempotent element of $A$.
  Then $A[e^{-1}]$ exists and its underlying object is
  given by inverting the idempotent map $e : A \to A$ in $\scr C$.
\end{lemma}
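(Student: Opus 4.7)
The plan is to construct $A[e^{-1}]$ explicitly as the splitting of the idempotent endomorphism
\[ m_e := (1_A \otimes e) \circ \Delta_A \colon A \to A \]
in $\scr C$, which exists since $\scr C$ is idempotent complete. Writing the splitting as $A \xrightarrow{p} A_e \xrightarrow{i} A$ (so that $p \circ i = \id_{A_e}$ and $i \circ p = m_e$), I would equip $A_e$ with the comultiplication $\Delta_{A_e} := (p \otimes p) \circ \Delta_A \circ i$ and counit $c_{A_e} := c_A \circ i$, then verify the universal property of \Cref{rmk:def-localization-coalg}.

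The technical heart of the argument is the identity
\[ \Delta_A \circ m_e = (m_e \otimes m_e) \circ \Delta_A. \]
This follows in two steps. First, coassociativity of $\Delta_A$ gives $\Delta_A \circ m_e = (1 \otimes m_e) \circ \Delta_A$, and cocommutativity then yields $\Delta_A \circ m_e = (m_e \otimes 1) \circ \Delta_A$ as well; combining these and using $m_e \circ m_e = m_e$ (which itself comes from $e \cdot e = e$ and cocommutativity) gives the displayed identity. Note that $m_e$ is \emph{not} a map of coalgebras because $c_A \circ m_e = (c_A \otimes e) \circ \Delta_A = e$, which differs from $c_A$; nevertheless the above identity is what we need.

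Granting this, one checks directly that $\Delta_{A_e}$ is coassociative (unfolding both sides of the coassociativity equation reduces to coassociativity of $\Delta_A$ after using the identity above plus $p \circ m_e = p$) and that the counit axiom holds (using cocommutativity together with $(e \otimes 1) \Delta_A = m_e$ to rewrite $(c_{A_e} \otimes 1) \Delta_{A_e} = p \circ m_e \circ i = p \circ i = \id_{A_e}$). The map $i$ becomes a coalgebra map: counit preservation is by definition, and $(i \otimes i) \Delta_{A_e} = (m_e \otimes m_e) \Delta_A \circ i = \Delta_A \circ m_e \circ i = \Delta_A \circ i$. Finally $i^*(e) = e \circ i = c_A \circ i = c_{A_e}$ (using $c_A \circ m_e = e$, hence $c_A \circ i = e \circ i$ after cancelling $p$), so $i^*(e) = 1_{A_e}$ is in particular a unit.

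For the universal property, given $f \colon T \to A$ in $\cCAlg(\scr C)$ with $f^*(e)$ a unit, I would observe that since $f^*(e)$ is itself idempotent, being a unit forces $f^*(e) = c_T$. Then the computation
\[ m_e \circ f = (1 \otimes e) \circ (f \otimes f) \circ \Delta_T = (f \otimes c_T) \circ \Delta_T = f \]
shows that $\tilde f := p \circ f$ satisfies $i \circ \tilde f = f$, and straightforward checks (analogous to those for $i$) show $\tilde f$ is a coalgebra map; uniqueness is immediate from $p \circ i = \id_{A_e}$. The main obstacle is just the bookkeeping identity for $\Delta_A \circ m_e$; everything else is formal once it is in place.
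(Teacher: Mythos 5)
There is a genuine gap, and it is precisely the difficulty this lemma exists to address. Your construction treats an object of $\cCAlg(\scr C)$ as if it were a $1$-categorical coalgebra: you define candidate structure maps $\Delta_{A_e} = (p \otimes p) \circ \Delta_A \circ i$ and $c_{A_e} = c_A \circ i$ on the summand $A_e$ and then verify coassociativity, cocommutativity and counitality as identities between homotopy classes of maps. But $\cCAlg(\scr C) = \CAlg(\scr C^\op)^\op$ is a category of $\E_\infty$-coalgebras in a (stable) $\infty$-category: an object is not a pair $(\Delta, c)$ satisfying axioms up to homotopy, but a full hierarchy of coherence data, and there is no mechanism in your argument that produces (or even asserts the existence of) these higher coherences on $A_e$. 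The same problem recurs for morphisms: checking $(i \otimes i)\Delta_{A_e} \simeq \Delta_A i$ and $c_A i = c_{A_e}$ only exhibits $i$ as a coalgebra map in the homotopy category, not as a point of $\Map_{\cCAlg(\scr C)}(A_e, A)$; likewise for $\tilde f = p \circ f$. Finally, the universal property of \Cref{rmk:def-localization-coalg} is a statement about mapping \emph{spaces} --- $\Map_{\cCAlg(\scr C)}(T, A[e^{-1}]) \to \Map_{\cCAlg(\scr C)}(T, A)$ must be a monomorphism onto the components where $e$ pulls back to a unit --- and your uniqueness argument ("immediate from $p \circ i = \id$") only addresses $\pi_0$. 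Your diagrammatic identities (e.g.\ $\Delta_A \circ m_e \simeq (m_e \otimes m_e) \circ \Delta_A$, and $m_e \circ f \simeq f$ when $f^*(e)$ is a unit) are correct and do appear implicitly in any treatment, but they are the easy part.

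The paper's proof is designed to avoid exactly this hand-construction. It passes to $\Pro(\scr C)$, where the tensor product commutes with arbitrary limits in each variable; there the \emph{dual of the standard, fully coherent localization theory for $\E_\infty$-rings} applies verbatim, so that for any $B \in \cCAlg(\Pro(\scr C))$ and any element $b$ the localization $B[b^{-1}]$ exists as a genuine $\E_\infty$-coalgebra, with underlying object $\lim (B \xleftarrow{\times b} B \xleftarrow{\times b} \cdots)$. All coherences come for free from that machinery rather than being checked by hand. One then observes that for $b = e$ idempotent this pro-object is constant with value the summand $A_1$ you identified (so your identification of the underlying object is right), and materializes via $M^{\mathrm{cA}}$ using \Cref{rmk:M-McA-triv}. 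If you want to salvage your approach, you would need some device of this kind --- a construction of the coalgebra structure by a universal property or from pre-existing structured machinery --- rather than specifying structure maps and axioms one homotopy at a time.
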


\begin{proof}
  We work with pro-objects.
  Comparing the universal properties, it is clear that if $(cA)[e^{-1}]$ exists then $M^{\mathrm{cA}}((cA)[e^{-1}]) \wequi A[e^{-1}]$; in particular the latter exists.
  Dualizing the standard theory for inverting elements in $\E_\infty$-algebras (see e.g. \cite[Lemma 12.1]{norms}), we see that for any $B \in \cCAlg(\Pro(\scr C))$ and element $b \in B$, $B[b^{-1}]$ exists and has underlying object given by \[ \lim(B \xleftarrow{\times b} B \xleftarrow{\times b} B \xleftarrow{\times b} \dots ). \]
  Let $A \wequi A_1 \oplus A_2$ be the idempotent decomposition of (the underlying object of) $A$, with $A_1$ corresponding to $e$ and $A_2$ corresponding to $(1-e)$.
  Then $cA \wequi cA_1 \oplus cA_2$ (the functor $c$ preserves colimits), which implies that $(cA)[e^{-1}] \wequi cA_1$ is constant.
  It follows that $A[e^{-1}]$ exists and is given by $M^{\mathrm{cA}}((cA)[e^{-1}]) \wequi M^{\mathrm{cA}}(cA_1) \wequi A_1$ (see Remark \ref{rmk:M-McA-triv}), as claimed.
\end{proof}

\begin{remark} \label{rmk:summands-mono-coalg}
  This result shows in particular that in $\cCAlg(\scr C)$, the summand inclusions $A_1 \to A_1 \amalg A_2$ are monomorphisms (because they coincide with localizations, which are monomorphisms by definition, see Remark \ref{rmk:def-localization-coalg}).
\end{remark}

\begin{proposition} \label{prop:coprod-coalg}
  Assume that the commutative ring $[\1, \1]_{\scr C}$ is non-zero and has no non-trivial idempotents.
  The functor $\Map_{\cCAlg(\scr C)}(\1, \ph)$ preserves finite coproducts.
  If in addition we assume that $\1 \in \scr C$ is compact and $\scr C$ has arbitrary coproducts, then the functor preserves all coproducts.
\end{proposition}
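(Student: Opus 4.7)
The plan is to leverage the tautological idempotents from \Cref{ex:tautological-idempotents} together with the localization theory of \Cref{lemm:coalg-idempotents}. For a binary coproduct $A \amalg B \in \cCAlg(\scr C)$, given a map $f \colon \1 \to A \amalg B$, the pulled-back elements $f^*(e_A), f^*(e_B) \in [\1, \1]_{\scr C}$ are idempotents (since element pullback is a ring map) summing to $f^*(1_{A \amalg B}) = 1_\1$. Any unit idempotent equals $1$, and the hypothesis forbids non-trivial idempotents, so each of $f^*(e_A), f^*(e_B)$ is $0$ or $1$ and exactly one equals $1$. If $f^*(e_A) = 1_\1$, then \Cref{rmk:def-localization-coalg} gives a factorization of $f$ through the localization $(A \amalg B)[e_A^{-1}] \to A \amalg B$, which by \Cref{lemm:coalg-idempotents} is precisely the summand inclusion $A \hookrightarrow A \amalg B$; the case $f^*(e_B) = 1_\1$ is symmetric. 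By \Cref{rmk:summands-mono-coalg} the summand inclusions are monomorphisms in $\cCAlg(\scr C)$, hence induce inclusions of connected components on $\Map(\1,-)$; the preceding analysis shows these two inclusions are mutually disjoint and jointly surjective, yielding $\Map(\1, A \amalg B) \simeq \Map(\1, A) \amalg \Map(\1, B)$.

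For the empty coproduct I would verify $\Map_{\cCAlg(\scr C)}(\1, \emptyset) = \emptyset$. Under the identification $\cCAlg(\scr C) \simeq \CAlg(\scr C^\op)^\op$, the initial object $\emptyset$ corresponds to the terminal (zero) algebra in $\CAlg(\scr C^\op)$, whose underlying object is the zero object of $\scr C$ and whose counit $1_\emptyset \colon \emptyset \to \1$ is therefore zero. Any coalgebra map $f \colon \1 \to \emptyset$ would force $1_\1 = 1_\emptyset \circ f = 0 \in [\1, \1]_{\scr C}$, contradicting the hypothesis that this ring is nonzero. Combining with the binary case, a straightforward induction gives preservation of all finite coproducts.

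For arbitrary coproducts under the additional hypotheses that $\1 \in \scr C$ is compact and $\scr C$ has all coproducts, I would extend the idempotent argument directly rather than try to prove that $\1$ is compact in $\cCAlg(\scr C)$. The infinite coproduct $\bigsqcup_I A_i$ carries pairwise orthogonal idempotents $e_i$ (one per $i \in I$) constructed analogously to the binary case, and applying \Cref{lemm:coalg-idempotents} to the complementary idempotent $1 - e_i$ identifies $(\bigsqcup_I A_i)[(1-e_i)^{-1}]$ with the summand inclusion $\bigsqcup_{j \neq i} A_j \hookrightarrow \bigsqcup_I A_i$. Given $f \colon \1 \to \bigsqcup_I A_i$, orthogonality of the $e_i$ forces at most one $f^*(e_i)$ to equal $1$. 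To show at least one does, suppose for contradiction all vanish: then $f$ factors through $\bigsqcup_{j \neq i} A_j$ for every $i$, so $Uf$ factors through $\bigoplus_{j \neq i} A_j$ in $\scr C$; by compactness of $\1$, $Uf$ factors through $\bigoplus_F A_j$ for some finite $F \subseteq I$, and intersecting these factorizations inside $\bigoplus_F A_j$ across $i \in F$ yields a factorization through $\bigcap_{i \in F} \bigoplus_{j \in F \setminus \{i\}} A_j = 0$; hence $Uf = 0$, contradicting the counit compatibility $1_\1 = 1_{\bigsqcup_I A_i} \circ f$. Exactly one $f^*(e_i)$ then equals $1$, and the summand-monomorphism argument from the binary case concludes. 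The main obstacle I anticipate is the identification of $(\bigsqcup_I A_i)[(1-e_i)^{-1}]$ with the summand inclusion $\bigsqcup_{j \neq i} A_j$ in the infinite-coproduct setting, since \Cref{lemm:coalg-idempotents} is phrased at the level of underlying objects and one has to check that the resulting subcoalgebra agrees as a subobject of $\bigsqcup_I A_i$ with the coproduct-summand inclusion.
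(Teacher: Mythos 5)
Your proof is correct and takes essentially the same approach as the paper: tautological idempotents pulled back along $f$ to orthogonal idempotents of $[\1,\1]_{\scr C}$, the no-nontrivial-idempotents hypothesis forcing each $f^*(e_i) \in \{0,1\}$, localization at idempotents identifying summand inclusions as monomorphisms, and compactness of $\1$ for infinite coproducts. The differences are cosmetic — the paper handles the infinite case directly (compactness gives a factorization through a finite subsum $S$, whence $f^*(e_i)=0$ off $S$ and $\sum_{i \in S} f^*(e_i) = 1$ by counit preservation) rather than by your contradiction via $1-e_i$, and the obstacle you anticipate, identifying $(\coprod_I A_i)[(1-e_i)^{-1}]$ with the summand $\coprod_{j \neq i} A_j$, is resolved by applying the binary decomposition $\coprod_I A_i \cong A_i \amalg \coprod_{j \neq i} A_j$ together with \Cref{lemm:coalg-idempotents} and \Cref{rmk:summands-mono-coalg}.
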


\begin{proof}
  We begin with the case of empty coproducts.
  If there is a map of coalgebras $\1 \to 0$, then composing with the co-unit exhibits $\1$ as a retract of $0$.
  This contradicts our assumption that $[\1, \1]_{\scr C}$ is non-zero.

  Next we treat the case of binary coproducts.
  Let $A_1, A_2 \in \cCAlg(\scr C)$ and set $A = A_1 \amalg A_2$.
  Recall from Example \ref{ex:tautological-idempotents} the two tautological idempotents $e_1, e_2$ of $A$.
  They have the property that $e_1 + e_2 = 1$, $e_1 e_2 = 0$ and $A[e_i^{-1}] \wequi A_i$ (Lemma \ref{lemm:coalg-idempotents}).
  Let $f: \1 \to A \in \cCAlg(\scr C)$ be any morphism.
  Then $f^*(e_1), f^*(e_2)$ are two orthogonal idempotents in $\1$, adding up to $1$.
  By assumption this implies that $\{f^*(e_1), f^*(e_2)\} = \{1,0\}$.
  In particular precisely one of $f^*(e_i)$ is a unit.
  This implies that the sum of monomorphisms (see Remark \ref{rmk:summands-mono-coalg}) \[ \Map(\1, A_1) \amalg \Map(\1, A_2) \to \Map(\1, A) \] is an equivalence (i.e. a bijection on $\pi_0$).

  Finally we treat arbitrary coproducts (assuming they exist and $\one \in \scr C$ is compact). 
  Let $A_i \in \cCAlg(\scr C)$ for $i \in I$.
  Then $ A = \coprod_{i \in I} A_i \in \cCAlg(\scr C) $ exists and has underlying object $\bigoplus_i A_i$.
  For each $i$, we obtain an idempotent $e_i$ of $A$, corresponding to the splitting of the underlying object.
  Consider now a morphism $f: \1 \to A \in \cCAlg(\scr C)$.
  It will suffice to show that $f^*(e_i)$ is invertible (and in fact equal to $1$) for precisely one $i$; indeed this implies that the sum of monomorphisms (see Remark \ref{rmk:summands-mono-coalg}) \[ \coprod_{i \in I} \Map(\1, A_i) \to \Map(\1, A) \] is an equivalence.
  To prove the claim, note that by compactness of $\1 \in \scr C$, the underlying map $\1 \to \bigoplus A_i$ factors through a finite sum $\bigoplus_{i \in S} A_i$.
  This implies that $f^*(e_i) = 0$ for $i \not\in S$ and also that $\sum_{i \in S} f^*(e_i) = 1$ (since $f$ preserves the co-units), from which it follows as in the finite case that $f^*(e_i) = 1$ for precisely one $i \in S$, and $f^*(e_i) = 0$ for all other $i$.
\end{proof}

\section{The Artin--Schreier pullback square}
\label{sec:key-lemma}

In this section we shall prove that the commutative square \eqref{eq:key-pullback} is a pullback.
To do this, we shall employ three techniques: using pro-objects, treating all vector spaces together at the same time as a single universal example and exploiting polynomiality to control the universal example.



\begin{definition}
  Throughout this section we will let $\scr C \coloneqq \Fun^{\kappa}(\Set_*^\ainj, \Mod_k)$
  denote the category of $\kappa$-accessible functor $\Set_*^{\ainj} \to \Mod_k$.  
\end{definition}

\begin{lemma} \label{lem:cofree-commute}
  The category $\scr C$ is presentable.
  Evaluation at a set $X$ provides a symmetric monoidal left adjoint
  \[ \ev_X : \scr C \to \Mod_k \]
  and the cofree coalgebra functors commute with $\ev_X$.    
\end{lemma}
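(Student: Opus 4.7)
My plan is to use that $\scr C$ is a functor category with pointwise symmetric monoidal structure, and reduce the cofree coalgebra claim to the standard identification of $\cCAlg$ of a functor category as a functor category of coalgebras.

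For presentability, note that $\Set_*^\ainj$ is accessible, generated under $\kappa$-filtered colimits by the small subcategory $(\Set_*^\ainj)^{<\kappa}$ of $\kappa$-compact (i.e.\ $\kappa$-small) pointed sets. Restriction then yields an equivalence $\scr C \simeq \Fun((\Set_*^\ainj)^{<\kappa}, \Mod_k)$, a functor category from a small $\infty$-category to a presentable category, hence presentable by standard results. Next, equip $\scr C$ with the pointwise symmetric monoidal structure inherited from $\Mod_k$; this is well-defined on $\kappa$-accessible functors because $\otimes_k$ commutes with colimits separately in each variable. Evaluation $\ev_X$ is then tautologically symmetric monoidal, and it preserves all colimits (computed pointwise in $\scr C$), hence admits a right adjoint by the adjoint functor theorem for presentable categories. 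Thus $\ev_X$ is a symmetric monoidal left adjoint.

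The substantive part is the commutation with cofree coalgebras. The plan is to invoke the identification
\[ \cCAlg(\scr C) \simeq \Fun^\kappa(\Set_*^\ainj, \cCAlg(\Mod_k)), \]
which is the dual (via $\cCAlg(\scr D) = \CAlg(\scr D^\op)^\op$) of the standard fact that $\CAlg$ of a functor category with pointwise symmetric monoidal structure is the functor category of $\CAlg$'s (compare the discussion in \cite[\S3.2.4]{HA}). The $\kappa$-accessibility constraint matches on both sides because the paper has fixed $\kappa$ so that $C_{\Mod_k}$ itself is $\kappa$-accessible, so postcomposition with $C_{\Mod_k}$ preserves $\kappa$-accessibility. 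Under this identification the forgetful functor $U_\scr{C}$ corresponds to postcomposition with $U_{\Mod_k}$, and $\ev_X$ on coalgebras corresponds to literal evaluation at $X$. Taking right adjoints, $C_\scr{C}$ is postcomposition with $C_{\Mod_k}$, and the commutation $\ev_X \circ C_\scr{C} \simeq C_{\Mod_k} \circ \ev_X$ reduces to the tautology that evaluating a pointwise-defined functor at $X$ gives its value at $X$.

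The main obstacle is establishing the key identification $\cCAlg(\Fun^\kappa(D, \scr E)) \simeq \Fun^\kappa(D, \cCAlg(\scr E))$ rigorously at the $\infty$-categorical level, particularly the compatibility with the accessibility constraint. The non-accessible version is standard (and essentially formal from the sectionwise definition of algebras and the characterization of pointwise symmetric monoidal structure); the $\kappa$-accessible refinement then follows because both the forgetful and cofree functors are $\kappa$-accessible.
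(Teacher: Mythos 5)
Your proposal is correct and follows essentially the same route as the paper: presentability via identifying $\scr C$ with functors out of the small category of $\kappa$-small pointed sets, the pointwise identification $\cCAlg(\Fun(D,\Mod_k)) \simeq \Fun(D,\cCAlg(\Mod_k))$, and the $\kappa$-accessibility of $C_k$ (built into the paper's choice of $\kappa$) to handle evaluation at arbitrary sets. The only organizational difference is that the paper first gets the commutation for $\kappa$-small $X$ and then extends by writing a general $X$ as a $\kappa$-filtered colimit, whereas you fold the accessibility into the identification of $\cCAlg(\scr C)$ and the postcomposition adjunction; the ingredients and the logic are the same.
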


\begin{proof}
  From \cite[Propositions 5.4.2.9 and A.2.6.4]{HTT} we know that $\Set_*^{\ainj}$ is $\kappa$-accessible.
  We may therefore identify $\scr C$ with $\Fun((\Set_*^\ainj)^{\kappa}, \Mod_k)$.
  As the source category is now small, we see that $\scr C$ is presentable.
  The symmetric monoidal structure on this functor category is pointwise, so $\ev_X$ is visibly symmetric monoidal.
  Since $\cCAlg(\Fun((\Set_*^\ainj)^{\kappa}, \Mod_k)) \wequi \Fun((\Set_*^\ainj)^{\kappa}, \cCAlg(\Mod_k))$ \cite[Remark 2.1.3.4]{HA} we see that cofree coalgebras are computed pointwise,
  so we learn the cofree coalgebra functors commutes with $\ev_X$ for any $\kappa$-small $X$.
  On the other hand, since $C_k$ is $\kappa$-accessible this property extends to evaluation at $X$ for all $X$.
\end{proof}




\begin{definition}
  An object $X \in \scr C$ is \emph{polynomial} if
  the induced functors $\pi_iX : \Set_*^\ainj \to \Mod_k^\heartsuit$ 
  are polynomial in the sense of \Cref{dfn:polynomial}
  and vanish for all but finitely many $i \in \Z$.
  We write $\scr C^{\poly} \subseteq \scr C$ for the full subcategory of those $X \in \scr C$ which are polynomial.  
\end{definition}

As a consequence of the closure properties for polynomial functors from \Cref{lem:poly-closure} we note that $\scr C^{\poly}$ is closed under finite (co)limits, retracts and tensor product.



For our purposes the most important object in $\scr C$ will be the $\scr V \in \scr C^{\heartsuit}$ introduced in Example \ref{ex:poly-free}.
$\scr V$ is constructed to be the ``universal vector space equipped with a basis''
and we have $ \ev_X(\Sigma^n \scr V) \cong \Sigma^n k\{X\} $, for any $X \in \Set_*^{\ainj}$.

Now we turn to the categories of pro-objects we will need.
As the inclusion $\scr C^{\poly} \hookrightarrow \scr C$ is symmetric monoidal and exact the induced functor
\[ \Pro( \scr C^{\poly} ) \to \Pro(\scr C) \]
is also fully faithful, symmetric monoidal, and preserves limits.
We say that an $X \in \Pro(\scr C)$ is polynomial if it is in the image of this functor.

Our first task is now to lift the Frobenius functional on $\Sigma^n \scr V$ constructed in \Cref{sec:frob} to a Frobenius functional at the level of pro-objects.

\begin{lemma} \label{lem:pro-frob}
  Let $n \ge 0$. There is a map
  \[ \varpi : (c(\Sigma^n \scr V)^{\otimes p})^{hC_p} \to c(\Sigma^n \scr V) \]
  in $\Pro(\scr C)$ such that the map $M(\varpi)$ coincides with
  the map obtained by restricting the projection map from \Cref{def:artin-schreier} along $X \mapsto S^n \wedge X$.
\end{lemma}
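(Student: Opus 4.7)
The plan is to construct $\varpi$ by mimicking \Cref{def:artin-schreier} internally within $\Pro(\scr C)$, rather than in $\scr C$. The category $\Pro(\scr C)$ is stably symmetric monoidal, $k$-linear, and its tensor product commutes with arbitrary limits in each variable (as noted in the preamble to \Cref{lemm:cofree-pro}). This formal improvement makes the Tate construction, the untwisting of \Cref{lem:untwist2}, and the projection of \Cref{lem:tate-trivial} well-behaved at the pro-level, even though $c$ does not preserve cofiltered limits.

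Concretely, writing $V = c(\Sigma^n \scr V) \in \Pro(\scr C)$ (equipped with trivial $C_p$-action) and $V^{\otimes p}$ carrying the permutation action, I would define $\varpi$ as the composite
\[
(V^{\otimes p})^{hC_p} \to (V^{\otimes p})^{tC_p} \xleftarrow{\sim} V^{tC_p} \xrightarrow{\mathrm{pr}_0} V
\]
in $\Pro(\scr C)$, where each arrow is the pro-analog of the corresponding arrow in \Cref{def:artin-schreier}. By the remark following \Cref{lemm:cofree-pro}, the objects $(V^{\otimes p})^{hC_p}$ and $(V^{\otimes p})^{tC_p}$ are computed as formal cofiltered limits over the skeletal filtration of $BC_p$. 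The untwisting isomorphism is constructed by the argument of \Cref{lem:untwist2}: for each $X \in \Set_*^{\ainj}$, the inclusion of the diagonal $X \hookrightarrow X^{\wedge p}$ splits off a summand whose complement carries a free $C_p$-action; this splitting is natural for almost-injective maps, and at the pro-level Tate kills the free summand. The projection $\mathrm{pr}_0$ is constructed by the argument of \Cref{lem:tate-trivial}, which works in any $k$-linear stably symmetric monoidal category with countable (co)limits, and in particular in $\Pro(\scr C)$.

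To verify that $M(\varpi)$ coincides with the desired map, I would apply $M$ to each arrow of the composite. Since $M$ is a right adjoint wherever defined, it commutes with the relevant limits; and since $\scr C$ is stable, cofibers are (shifted) fibers so $M$ also commutes with the cofiber-of-norm defining pro-Tate. Hence $M$ sends each arrow to the corresponding arrow in $\scr C$ of \Cref{def:artin-schreier}, yielding (after restriction along $X \mapsto S^n \wedge X$) precisely the projection map there.

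The main obstacle is verifying that the pro-versions of \Cref{lem:untwist2} and \Cref{lem:tate-trivial}, as well as the compatibility of $M$ with the pro-Tate construction, genuinely work as claimed. All of these ultimately rely on the improved tensor-product/limit interchange in $\Pro(\scr C)$ combined with the stability of $\scr C$.
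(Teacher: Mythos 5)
There is a genuine gap, and it sits exactly where the lemma's difficulty lies. Your construction hinges on the claim that the untwisting map $V^{tC_p} \to (V^{\otimes p})^{tC_p}$ is an equivalence in $\Pro(\scr C)$, i.e.\ that the \emph{pro}-Tate construction kills the off-diagonal summand $c\scr V'$ of $(c\scr V)^{\otimes p} \cong c\scr V \oplus c\scr V'$. You justify this by ``at the pro-level Tate kills the free summand,'' but the freeness of the $C_p$-action on $\scr V'$ is only a \emph{pointwise} statement: for each fixed $X$ the module $\scr V'(X) = k\{X^{\wedge p}\setminus\Delta(X)\}$ is induced, but $\scr V'$ is \emph{not} induced as a $C_p$-object of $\scr C$, because no inducedness structure can be chosen naturally in $X$ — already naturality under bijections fails (for $p=2$ and $X=\{*,1,2\}$, the swap module $k\{(1,2),(2,1)\}$ is $k[C_2\times\Sigma_2]/\Delta$, which is not of the form $k[C_2]\otimes M$ for any $\Sigma_2$-module $M$). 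Pointwise inducedness does give vanishing of the \emph{genuine} Tate construction in $\scr C$ (limits there are computed pointwise), which is why the untwisting works in $\scr C$ and after applying $M$; but the pro-Tate is a strictly finer invariant: its vanishing on $c\scr V'$ requires the transition maps in the tower $\{\mathrm{cof}(\scr V'_{hC_p} \to \scr V'^{h_nC_p})\}_n$ to be eventually null \emph{as maps in $\scr C$}. These cofibers are concentrated in single, drifting degrees, so the transition maps are Yoneda products of $\mathrm{Ext}^1$-classes between heart objects of the functor category $\scr C$, and such classes have no reason to vanish (unlike in $\Mod_k$, where $\mathrm{Ext}^{>0}$ over a field is zero — which is why the pointwise statement is true). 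Nothing in your argument addresses this, and it is precisely to avoid this claim that the paper never inverts the untwisting map at the pro-level.

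The paper's proof is structured differently so as to need only the materialized statement: it reduces (via the assembly map $(M(-))^{tC_p} \to M((-)^{tC_p})$ and the factorization of the projection through $T$) to showing that materialization is \emph{surjective} on $\pi_0\Map(-, c\Sigma^n\scr V)$; it then uses the retraction $\scr V \xrightarrow{\Delta} \scr V^{\otimes p} \xrightarrow{r} \scr V$ and the fact that $M$ of $(cr)^{tC_p}$ is an isomorphism (here pointwise inducedness suffices), applies \Cref{lem:tate-trivial} at the pro-level only for the \emph{trivial} action, and concludes by cocompactness of constant pro-objects together with the computation $[\Sigma^j\scr V, \scr V]_{\scr C} = 0$ for $j \neq 0$. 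This surjectivity formulation also repairs a second defect of your argument: even granting your pro-level isomorphism, the claim that ``$M$ sends each arrow to the corresponding arrow'' is not automatic for $\mathrm{pr}_0$, since the splitting in \Cref{lem:tate-trivial} depends on choices (a null homotopy of the norm and a splitting of $BC_p$) made independently in $\Pro(\scr C)$ and in $\Mod_k$; showing that \emph{every} map downstairs lifts makes matching these choices unnecessary.
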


\begin{proof}

  As $M$ is a right adjoint, there is an assembly map
  \[ (M(-))^{tC_p} \to M((-)^{tC_p}). \]
  Our first observation is that as a consequence of the fact that $c(-)$ is a left adjoint and $M(c(-)) \cong \Id$,
  this assembly map becomes an isomorphism after precomposition with $c(-)$.
  As the projection map from \Cref{def:artin-schreier} factored through $T(\Sigma^n \scr V)$
  it will now suffice to argue that materialization induces a surjective map 
  \[ \pi_0\Map_{\Pro(\scr C)}( T(c\Sigma^n \scr V), c \Sigma^n \scr V ) \to \pi_0\Map_{\scr C}( M T(c\Sigma^n \scr V), M c \Sigma^n \scr V ). \]

  As the cofiber of the map $a_{n\rho} : \Ss^n \to (\Ss^n)^{\otimes p}$ is built out of finitely many induced cells (note that the finite pointed $C_p$-CW complex $(S^n)^{\wedge p}/S^n$ has free action),
  the map $\Sigma^n c \scr V^{\otimes p} \to c(\Sigma^n \scr V)^{\otimes p}$ becomes an isomorphism upon applying $(-)^{tC_p}$ \cite[Lemma I.3.8(i)]{NS}.
  This allows us to reduce to the case $n=0$.

  The object $\scr V^{\otimes p}$ is given by the functor $S \mapsto k\{S^{\wedge p} \}$.
  The diagonal then defines a map $ \Delta : \scr V \to \scr V^{\otimes p} $.
  We further write $\scr V' \subseteq \scr V^{\otimes p}$ for the subfunctor $S \mapsto k\{S^{\wedge p} \setminus \Delta (S) \}$.\footnote{Note that the functor $\scr V' : \Set_*^\ainj \to \Mod_k^\heartsuit$ does not extend to a functor on $\Set_*$. This is the origin of our restriction to almost-injective maps in previous sections.}  
  Together these maps give us a splitting $\scr V^{\otimes p} \cong \scr V \oplus \scr V'$.
  Upon materializing and evaluating at a finite set $X$ we note that the $C_p$ action on $\scr V'$ is induced and therefore that the projection map
  $r : \scr V^{\otimes p} \to \scr V$
  is sent to an isomorphism by $M((-)^{tC_p})$.
  The upshot of this is that it will now suffice to instead prove that materialization induces a surjection
  \[ \pi_0\Map_{\Pro(\scr C)}( (c \scr V)^{tC_p} , c \scr V ) \to \pi_0\Map_{\scr C}( M (  (c\scr V)^{tC_p} ), M c \scr V ). \]
  
  Applying \Cref{lem:tate-trivial} (note that $c \scr V$ has trivial $C_p$-action) we obtain an isomorphism
  \[ (c \scr V)^{tC_p} \cong \left( \prod_{0 > j } \Sigma^j c \scr V \right) \oplus c \scr V \oplus \left( \bigoplus_{j > 0} \Sigma^j c \scr V \right). \]
  Using the fact that $c$ commutes with colimits and the cocompactness of objects in the image of $c$ we learn that the map above is equivalent to the map 
  \[ \pi_0\left( \colim_{i \to -\infty} \prod_{j \geq i} \Map_{\scr C}( \Sigma^j \scr V, \scr V ) \right) \to \pi_0\left( \prod_{j} \Map_{\scr C}( \Sigma^j \scr V, \scr V ) \right). \]
  Taking $\pi_0$ commutes with products and filtered colimits so what remains is to compute $\pi_0$ of the individual mapping spaces above.
  To complete the proof, we will show that $\pi_0\Map_{\scr C}( \Sigma^j \scr V, \scr V )$ vanishes for $j \ne 0$.
  To see this, note that $\scr V$ is a retract of the $k$-linearization of the corepresentable functor $\Map_{\Set_*^\ainj}(S^0, -)$.
  We hence have
  \[ \pi_0\Map_{\scr C}( \Sigma^j \scr V, \scr V ) \subset \pi_0( \Sigma^{-j} \ev_{S^0} \scr V) \cong \begin{cases} k, & j=0 \\ 0, & j \neq 0 \end{cases}. \]
\end{proof}

\begin{construction} \label{cnstr:pro-AS}
  Following the recipe from \Cref{def:artin-schreier} we construct a \emph{Frobenius functional} $F$ for $c\Sigma^n \scr V$
  as the composite of the canonical arity $p$ co-operation
  \[  C(c \Sigma^n \scr V) \xrightarrow{\Delta} (C(c \Sigma^n \scr V)^{\otimes p})^{hC_p} \xrightarrow{c} ((c \Sigma^n \scr V)^{\otimes p})^{hC_p} \]
  with a choice of projection 
  \[ ((c \Sigma^n \scr V)^{\otimes p})^{hC_p} \xrightarrow{\varpi} c \Sigma^n \scr V \]
  as provided by \Cref{lem:pro-frob}.
  Passing to the mate of $\epsilon - F$ (recall that $\epsilon$ is the counit of the forgetful--cofree adjunction)
  we construct the \emph{pro-(Artin--Schreier map)}
  \[ 1-F \colon C(c \Sigma^n \scr V) \to C(c \Sigma^n \scr V). \]
  For $n \ge 0$, let $\scr Q_n$ be defined by the pullback square in $\cCAlg(\Pro(\scr C))$
  \begin{equation}
    \begin{CD}
      \scr Q_n @>>> C(c\Sigma^n \scr V) \\
      @VVV              @V{1-F}VV \\
      ck  @>>> C(c\Sigma^n \scr V).
    \end{CD} \label{second-square}
  \end{equation}
\end{construction}

\begin{lemma} \label{lemm:pro-coalg-frob}
  Let $n \ge 0$.
  The image under $M^{\mathrm{cA}}$ of the cospan from \eqref{second-square} is the restriction of the cospan from \eqref{eq:key-pullback} to $\Set_*^{\ainj}$.
\end{lemma}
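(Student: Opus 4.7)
The plan is to verify the identification componentwise in the cospan. For the vertices, $M^{\mathrm{cA}}(ck) \cong k$ by \Cref{rmk:M-McA-triv} (the underlying pro-object is constant), while \Cref{ex:M-cofree} gives $M^{\mathrm{cA}}(C(c\Sigma^n \scr V)) \cong C(\Sigma^n \scr V)$. Evaluating at any $W \in \Set_*^{\ainj}$ via \Cref{lem:cofree-commute} yields $k$ and $C_k(\Sigma^n k\{W\})$ respectively, matching the vertices of the cospan in \eqref{eq:key-pullback}.

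For the bottom edge of \eqref{second-square}, I would first identify $ck \cong C(0)$ by the terminal property of the cofree coalgebra on $0$, so that this edge is $C$ applied to the zero morphism $0 \to c\Sigma^n \scr V$ in $\Pro(\scr C)$. Functoriality of $M^{\mathrm{cA}}$ combined with the vertex identifications converts this to $C$ applied to $0 \to \Sigma^n \scr V$ in $\scr C$, which upon evaluation at $W$ is precisely the bottom edge of \eqref{eq:key-pullback}.

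The Artin--Schreier edge carries the real content. Both $1 - F_{\mathrm{pro}}$ and the map in \eqref{eq:key-pullback} are coalgebra endomorphisms of cofree coalgebras, hence are uniquely determined by their underlying functionals under $U \dashv C$. By Constructions \ref{def:artin-schreier} and \ref{cnstr:pro-AS}, these functionals both have the form $\epsilon - \varpi \circ \epsilon \circ \Delta$, built out of the arity-$p$ diagonal $\Delta$, the counit $\epsilon$ of the underlying--cofree adjunction, and a Tate-construction projection $\varpi$. The data $\Delta$ and $\epsilon$ are universal and transport cleanly through materialization and evaluation at $W$; the genuine content of the comparison is the compatibility of the two projections, which is supplied by \Cref{lem:pro-frob}: $M(\varpi)$ agrees with the projection of \Cref{def:artin-schreier} after restriction along $X \mapsto S^n \wedge X$. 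Assembling these ingredients, the functional of $M^{\mathrm{cA}}(1 - F_{\mathrm{pro}})$ at $W$ matches the functional of the Artin--Schreier map of \Cref{def:artin-schreier} applied to $X = S^n \wedge W$, and hence the two coalgebra endomorphisms agree.

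The main subtlety I anticipate is the bookkeeping of mates across the two adjunctions $c \dashv M^{\mathrm{cA}}$ and $U \dashv C$, given that $U M^{\mathrm{cA}} \ne M U$ in general. The explicit isomorphism $M^{\mathrm{cA}}(C(cV)) \cong C(V)$ from \Cref{ex:M-cofree} circumvents this difficulty by giving us the identification of underlying objects directly from the universal property, and the non-formal homotopical content has already been extracted in \Cref{lem:pro-frob}.
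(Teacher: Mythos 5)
Your proposal is correct and takes essentially the same route as the paper's own proof: the vertices are identified via \Cref{ex:M-cofree} together with the pointwise computation of cofree coalgebras from \Cref{lem:cofree-commute}, and the edges via the universal property of cofree coalgebras (comparison of functionals/mates) combined with the materialization compatibility of $\varpi$ supplied by \Cref{lem:pro-frob}. The additional details you make explicit --- the identification $ck \cong C(0)$ by terminality for the bottom edge, and the reduction of the Artin--Schreier comparison to the underlying functionals $\epsilon - \varpi \circ \epsilon \circ \Delta$ --- are exactly the content the paper compresses into its appeal to ``universal properties of cofree coalgebras and the compatibility with materialization from \Cref{cnstr:pro-AS}.''
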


\begin{proof}
  Example \ref{ex:M-cofree} provides an isomorphism $M^{\mathrm{cA}}Cc(-) \cong C(-)$ and
  cofree coalgebras in $\scr C$ are computed pointwise (\Cref{lem:cofree-commute}) so the objects are the same.
  The maps between them are identified using the universal properties of cofree coalgebras and
  the compatibility with materialization from \Cref{cnstr:pro-AS}.
\end{proof}

\begin{corollary} \label{cor:pullback}
Let $W \in \Set_*^\ainj$, $n \ge 0$.
Then $M^{\mathrm{cA}}\ev_W \scr Q_n$ is the pullback of the cospan in \eqref{eq:key-pullback}.
\end{corollary}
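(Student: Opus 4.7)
The plan is to verify the universal property directly. Let $P \in \cCAlg(\Mod_k)$ denote the pullback of the cospan \eqref{eq:key-pullback} (which exists since $\cCAlg(\Mod_k)$ is presentable). It suffices to exhibit, for every $T \in \cCAlg(\Mod_k)$, a natural equivalence
\[ \Map_{\cCAlg(\Pro(\Mod_k))}(cT, \ev_W \scr Q_n) \cong \Map_{\cCAlg(\Mod_k)}(T, P). \]
By the defining partial adjunction $c \dashv M^{\mathrm{cA}}$, such an equivalence shows that $M^{\mathrm{cA}} \ev_W \scr Q_n$ exists and equals $P$.

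First I would show that $\ev_W \scr Q_n$ is the pullback in $\cCAlg(\Pro(\Mod_k))$ of the cospan
\[ C(c\Sigma^n k\{W\}) \xrightarrow{1-F} C(c\Sigma^n k\{W\}) \xleftarrow{0} ck. \]
The functor $\ev_W \colon \scr C \to \Mod_k$ is a symmetric monoidal exact left adjoint (\Cref{lem:cofree-commute}), and its levelwise pro-extension preserves cofree coalgebras (\Cref{rmk:pres-cofree}) and finite limits of underlying pro-objects. Using the identification $\cCAlg(\Fun^\kappa(I, \Mod_k)) \cong \Fun^\kappa(I, \cCAlg(\Mod_k))$ from the proof of \Cref{lem:cofree-commute}, the induced functor on $\cCAlg(\Pro(\ph))$ preserves the relevant pullback. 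Applying this to \eqref{second-square} yields the claim.

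Next I would compute $\Map(cT, \ev_W \scr Q_n)$ as a pullback of mapping spaces. The two copies of $C(c\Sigma^n k\{W\})$ simplify via the cofree--forget adjunction on $\Pro(\Mod_k)$, fully faithfulness of $c \colon \Mod_k \to \Pro(\Mod_k)$, and the cofree--forget adjunction on $\Mod_k$:
\[ \Map(cT, C(c\Sigma^n k\{W\})) \cong \Map(T, \Sigma^n k\{W\}) \cong \Map(T, C_k(\Sigma^n k\{W\})); \]
and fully faithfulness of $c \colon \cCAlg(\Mod_k) \to \cCAlg(\Pro(\Mod_k))$ gives $\Map(cT, ck) \cong \Map(T, k)$. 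By \Cref{lemm:pro-coalg-frob} the structure maps materialize to those of \eqref{eq:key-pullback} (the sign difference between $1-F$ and $F-1$ is immaterial since the base of the pullback is the zero map). Hence the mapping space equals $\Map(T, P)$, as desired. The only nontrivial technical point is the preservation of coalgebra pullbacks by $\ev_W$ on pro-objects; the remaining manipulations are routine applications of the cited adjunctions.
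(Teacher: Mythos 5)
Your overall strategy coincides with the paper's: the paper's entire proof of this corollary is ``$M^{\mathrm{cA}}$ preserves limits, so the claim follows from \Cref{lemm:pro-coalg-frob}'', and your mapping-space computation is exactly that adjunction statement unpacked (your identification of $\Map_{\cCAlg(\Pro(\Mod_k))}(cT, C(c\Sigma^n k\{W\}))$ re-derives \Cref{ex:M-cofree} by hand). The identifications of the three corners, the use of \Cref{lemm:pro-coalg-frob} for the structure maps, and the observation that the discrepancy between $1-F$ and $F-1$ is harmless because the base map is zero, are all fine.

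The one place where your justification does not hold up as written is the step you yourself single out as the only nontrivial one: that the pro-extension of $\ev_W$ carries the defining pullback \eqref{second-square} to a pullback in $\cCAlg(\Pro(\Mod_k))$. You argue this ``using the identification $\cCAlg(\Fun^{\kappa}(I,\Mod_k)) \cong \Fun^{\kappa}(I,\cCAlg(\Mod_k))$'', but that equivalence lives at the level of $\scr C$ and does \emph{not} pass to pro-categories: $\cCAlg(\Pro(\scr C))$ is not a functor category valued in $\cCAlg(\Pro(\Mod_k))$, since a pro-object of functors is not a functor to pro-objects. (This distinction between coalgebras in pro-objects and pro-objects of coalgebras is precisely the subtlety that \S\ref{sec:pro} of the paper exists to negotiate, so it cannot be elided here.) The claim itself is true, and the other ingredients you cite do prove it, by a cobar argument: a pullback in $\cCAlg(\Pro(\scr D))$ is the totalization of the cobar construction on the cospan, whose terms are products of coalgebras, i.e.\ tensor products of underlying pro-objects; by \Cref{lemm:cofree-pro} the forgetful functor preserves such cosifted limits, so the pullback is computed entirely from underlying pro-objects and tensor products. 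The pro-extension of $\ev_W$ is symmetric monoidal and preserves all limits of underlying pro-objects (being the pro-extension of an exact functor), hence preserves every piece of this description and therefore the pullback. With that repair, your proof is complete and agrees in substance with the paper's.
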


\begin{proof}
  As $M^{\mathrm{cA}}$ preserves limits, this follows from \Cref{lemm:pro-coalg-frob}.
\end{proof}

The following is essentially a reformulation of Mandell's theorem \cite{mandell-e-infty}.
\begin{lemma} \label{lemm:mandell}
  Let $n \ge 1$.\NB{$n \ge 1$ because that's what Mandell assumes. Lurie does not, as far as I can tell...}\NB{Lurie also just refers to Mandell without saying anything about reducing $k=\overline{F}_p$...}
  $\ev_{S^0} \scr Q_n$ is connective and pro-constant up to pro-truncation with with finite dimensional homotopy groups.
  Moreover, the square \eqref{eq:key-pullback} is a pullback for $W=S^0$.
\end{lemma}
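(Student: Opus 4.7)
The plan is to reduce the assertion to Mandell's theorem applied to the Eilenberg--MacLane space $K(\F_p, n)$, where the finite-type property of $H^*(K(\F_p, n); k)$ for $n \geq 1$ allows us to pass between $\E_\infty$-algebras and $\E_\infty$-coalgebras via $k$-linear dualization.

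First, I would invoke Mandell's explicit pushout presentation of the cochain algebra $k^{K(\F_p, n)}$ in $\CAlg_k$, which expresses it as a pushout of free $\E_\infty$-algebras along the Artin--Schreier map $1 - Q_0$. Since $H^*(K(\F_p, n); k)$ is finite-dimensional in each degree, all the $\E_\infty$-algebras appearing in Mandell's square are degreewise finite-dimensional, and $k$-linear dualization converts this pushout into a pullback in $\cCAlg_k$. By \Cref{rmk:Frob-Q0}, the dual of $Q_0$ is the coalgebra Frobenius $F$, so the dualized pullback is precisely the cospan of \eqref{eq:key-pullback} at $W = S^0$, with canonical pullback $k[K(\F_p, n)]$ identified via Mandell's fundamental-class map. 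This already yields the pullback square claim, once the materialization step below goes through.

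Next, I would analyze the pro-structure of $\ev_{S^0} \scr Q_n$. By \Cref{cor:pullback}, $M^{\mathrm{cA}} \ev_{S^0} \scr Q_n$ equals the pullback of \eqref{eq:key-pullback} at $W = S^0$, which by the previous step is $k[K(\F_p, n)]$; its homotopy groups are therefore finite-dimensional. Using \Cref{lemm:cofree-pro} to describe the underlying pro-object of $C(c\Sigma^n k)$ as the formal cofiltered limit $\prod_m (\Sigma^{nm} k)^{h\Sigma_m}$ (where each $(\Sigma^{nm} k)^{h\Sigma_m}$ is itself a formal cofiltered limit coming from finite skeleta of $B\Sigma_m$), each stage has finite-dimensional homotopy in each degree; consequently, each homotopy pro-object $\pi_i(\ev_{S^0} \scr Q_n)$ is a countable pro-object in the artinian abelian category of finite-dimensional $k$-vector spaces. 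Applying \Cref{lemm:constancy} in this category, using that the limit $\pi_i k[K(\F_p, n)]$ is finite-dimensional, each such pro-object is pro-constant. Then \Cref{lemm:detect-constant} yields that $\ev_{S^0} \scr Q_n$ is connective and pro-constant up to pro-truncation, and \Cref{lemm:coalg-M} combined with \Cref{cor:pullback} identifies the materialization with the pullback of \eqref{eq:key-pullback}, completing the proof.

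The main obstacle is carefully tracking the pro-structure and the dualization. Dualizing Mandell's pushout to a pullback in coalgebras relies essentially on the finite-type hypothesis on $H^*(K(\F_p, n); k)$, explaining the restriction to $n \geq 1$. Verifying that the pro-structure of $\ev_{S^0} \scr Q_n$ interacts correctly with the coalgebra-theoretic pullback requires the pro-object framework of \S\ref{sec:pro}; in particular it is essential that we check finite-dimensionality and countability stage-by-stage rather than only after materialization, since \Cref{lemm:constancy} must be applied inside the artinian category of finite-dimensional vector spaces.
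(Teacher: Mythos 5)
Your overall strategy (dualize to reduce to Mandell's pushout, then use the constancy lemmas) is the right one in spirit, but the execution has a genuine gap: you perform the dualization at the level of honest objects of $\Mod_k$ and $\cCAlg_k$, and the finiteness claim this rests on is false. The free $\E_\infty$-algebras in Mandell's square are \emph{not} degreewise finite-dimensional: $\mathrm{Free}_k^{\E_\infty}(\Sigma^{-n}k) \cong \bigoplus_m (\Sigma^{-nm}k)_{h\Sigma_m}$, and in any fixed degree infinitely many $m$ contribute nontrivially, since the group homology $H_*(\Sigma_m;k)$ is nonzero in infinitely many degrees for $m \geq p$. (Only the two corners $k$ and $k^{K(\F_p,n)}$ are of finite type.) Consequently the $k$-linear dual of the free algebra carries no natural coalgebra structure, and it cannot be identified with $C_k(\Sigma^n k)$ in any case: its underlying module would be the ``expected formula'' $\prod_m ((\Sigma^n k)^{\otimes m})^{h\Sigma_m}$, which Warning \ref{warning:cofree} tells you is \emph{not} the cofree coalgebra. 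Moreover, even granting a commuting square of coalgebras, the assertion that dualization converts Mandell's pushout into a pullback \emph{in} $\cCAlg_k$ presupposes that limits of coalgebras are computed on underlying modules --- which is exactly the difficulty the entire pro-object apparatus of the paper exists to circumvent, and exactly what this lemma is supposed to establish. The correct move is to dualize at the pro-level: the levelwise duality $(\Mod_k^\omega)^\op \cong \Mod_k^\omega$ induces an honest symmetric monoidal equivalence $\cCAlg(\Pro(\Mod_k^\omega))^\op \cong \CAlg(\Ind(\Mod_k^\omega)) \cong \CAlg(\Mod_k)$, under which the defining pullback of $\ev_{S^0}\scr Q_n$ becomes a pushout of $\E_\infty$-algebras, the pro-cofree coalgebra of \Cref{lemm:cofree-pro} becomes the free algebra, and Mandell's theorem identifies the result with $k^{K(\F_p,n)}$. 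No degreewise finiteness of the free algebra is ever needed; only levelwise dualizability of the pro-stages.

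This flaw then propagates into your second paragraph and makes it circular. To apply \Cref{lemm:constancy} you must know that the \emph{levelwise} homotopy pro-objects $\{\pi_i (\ev_{S^0}\scr Q_n)_\lambda\}_\lambda$ admit limits in the artinian category of finite-dimensional vector spaces. You infer this from the homotopy of $M^{\mathrm{cA}}\ev_{S^0}\scr Q_n$ via \Cref{cor:pullback}, but $M^{\mathrm{cA}}$ is the coalgebra materialization, which a priori has nothing to do with the levelwise pro-structure: the identification $M^{\mathrm{cA}} \cong M$ is available only \emph{after} pro-constancy, via \Cref{lemm:coalg-M} --- it is the conclusion, not an input. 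The way out is again the dual side: under the pro/ind duality the homotopy pro-objects become ind-objects whose colimits compute $\pi_*k^{K(\F_p,n)} \cong H^{-*}(K(\F_p,n);k)$ (filtered colimits are exact), and Cartan's finiteness theorem then supplies the hypothesis of \Cref{lemm:constancy}. Finally, a smaller omitted point: Mandell's theorem produces \emph{some} pushout square on the given cospan, whereas the lemma concerns the specific commuting square \eqref{eq:key-pullback}; one still has to check that the two $2$-cells agree, which requires an argument (the relevant mapping space of augmented algebras into the coconnective algebra $k^{K(\F_p,n)}$ is $0$-truncated, so the filling is essentially unique).
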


\begin{proof}
The object $\ev_{S^0} \scr Q_n$ lies in $\Pro(\Mod_k)$ and is obtained as a pullback involving $ck$ and $C(c\Sigma^n k)$.
Since the ingredients lie in $\Pro(\Mod_k^\omega)$, we deduce (using Remark \ref{rmk:pres-cofree}) that $\ev_{S^0} \scr Q_n$ lies in $\Pro(\Mod_k^\omega)$ as well.
Observe that taking duals induces a $t$-exact, symmetric monoidal equivalence $(\Mod_k^\omega)^\op \cong \Mod_k^\omega$, and hence also \[ \Pro(\Mod_k^\omega)^\op \cong \Ind(\Mod_k^\omega) \cong \Mod_k \qquad\text{and}\qquad \cCAlg(\Pro(\Mod_k^\omega))^\op \cong \CAlg(\Mod_k). \]

We may therefore think of $\ev_{S^0} \scr Q_n$ as an ordinary $\E_\infty$-ring in $\Mod_k$.
In fact this equivalence interchanges cofree and free objects and respects the Artin--Schreier map construction (See \Cref{rmk:Frob-Q0}).
Thus by Mandell's theorem \cite[Theorem 6.2 and proof of Proposition A.7]{mandell-e-infty}, $\ev_{S^0} \scr Q_n$ corresponds to the $\E_\infty$-$k$-algebra $k^{K(\F_p, n)}$.
In particular the homotopy groups are degreewise finite \cite[Theorem 4]{Cartan-EM}.
In other words, the homotopy groups viewed as objects in $\Ind(\Mod_k^{\omega,\heart})$ have colimit again in $\Mod_k^{\omega,\heart}$.
Lemma \ref{lemm:constancy} then implies that they must be constant as ind-objects.\footnote{This conclusion follows already from the equivalence $\Ind(\Mod_k^{\omega,\heart}) \cong \Mod_k^\heart$, but we shall re-use the argument later in a situation where this trick is not available.}
This proves that $\ev_{S^0} \scr Q_n$, now viewed again as a pro-object, has pro-constant homotopy groups which vanish for $i < 0$.
In particular, by Lemma \ref{lemm:detect-constant}, it is pro-constant up to pro-truncation, and hence by Lemma \ref{lemm:coalg-M} we have $M^{\mathrm{cA}} \ev_{S^0} \scr Q_n \cong M \ev_{S^0} \scr Q_n$. 

Knowing now that $\ev_{S^0} \scr Q_n$ is pro-constant up to pro-truncation it will suffice to instead show that the map
$ \tau_{< \infty} c k[K(\F_p, n)] \to \ev_{S^0} \scr Q_n $ is an isomorphism or equivalently that the square
\[ \begin{tikzcd}
  \mathrm{Free}_k^{\E_\infty}(\Sigma^{-n}k) \ar[r, "1-F"] \ar[d] &
  \mathrm{Free}_k^{\E_\infty}(\Sigma^{-n}k) \ar[d] \\
  k \ar[r] &
  k^{K(\F_p,n)}
\end{tikzcd} \]
is pushout square of augmented $\E_\infty$-$k$-algebras\footnote{Recall that the forgetful functor from augmented algebras to algebras preserves pushouts.}.
The key point here is that the right vertical map sends the class in degree $-n$ to the fundamental class
(indeed this is how this map was constructed in \Cref{cnstr:AS-square}).
As there exists a pushout square with the same vertices and maps by \cite[Theorem 6.2 and proof of Proposition A.7]{mandell-e-infty} what remains is to check the $2$-cells agree.
On the other hand the space of maps of augmented $\E_\infty$-$k$-algebras $\mathrm{Free}_k^{\E_\infty}(\Sigma^{-n}k) \to k^{K(\F_p,n)}$ is $0$-truncated so the space of choices of the $2$-cell is contractible (and therefore they agree).
This concludes the proof.
\end{proof}

\begin{lemma} \label{lem:finite-case}
  Given $V, W \in \Set_*^\ainj$ the projection maps $V \vee W \to V$ and $V \vee W \to W$ (collapsing the other set to the base point) induce isomorphisms of coalgebras
  \[ \ev_{V \vee W} \scr Q_n \cong \ev_V \scr Q_n \otimes \ev_W \scr Q_n. \]  
  Consequently:
  \begin{enumerate}[(i)]
  \item For $V$ finite, $\ev_V \scr Q_n \in \Pro(\Mod_k)$ is connective and pro-constant up to pro-truncation with finite dimensional homotopy groups.
  \item For $V$ finite, the square \eqref{eq:key-pullback} is a pullback square.
  \end{enumerate}
\end{lemma}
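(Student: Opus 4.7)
The plan is first to establish the wedge-product formula and then deduce (i) and (ii) formally. The collapse maps $V \vee W \to V$ and $V \vee W \to W$ are almost-injective, so by functoriality of $\scr Q_n$ in $V \in \Set_*^{\ainj}$ they induce a comparison morphism $\ev_{V \vee W} \scr Q_n \to \ev_V \scr Q_n \otimes \ev_W \scr Q_n$ (using $\otimes = \times$ in $\cCAlg$). To show this is an isomorphism I will first reformulate: since $ck$ is terminal in $\cCAlg(\Pro(\scr C))$, the pullback of \eqref{second-square} coincides with the equalizer in $\cCAlg(\Pro(\scr C))$ of $\mathrm{id}$ and the Frobenius $F$ on $C(c\Sigma^n \scr V)$.

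The crucial step will be to show that under the canonical isomorphism $C(c\Sigma^n k\{V \vee W\}) \cong C(c\Sigma^n k\{V\}) \otimes C(c\Sigma^n k\{W\})$ --- arising from $C$ being a right adjoint and $\otimes = \times$ in $\cCAlg$ --- the Frobenius $F_{V \vee W}$ agrees with $F_V \otimes F_W$. A coalgebra endomorphism of a cofree coalgebra is determined by its composite with the counit $\epsilon$, so it suffices to match these composites. By naturality of the Frobenius functional $F_X$ in $X \in \Spc_*^{\mathrm{fd}}$ applied to the collapse maps, the projections of $\epsilon \circ F_{V \vee W}$ to $c\Sigma^n k\{V\}$ and $c\Sigma^n k\{W\}$ are $F_{X,V} \circ \mathrm{pr}_V$ and $F_{X,W} \circ \mathrm{pr}_W$, matching the analogous components of $\epsilon \circ (F_V \otimes F_W)$. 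Granted this identification, the product formula follows by comparing universal properties: a coalgebra map $T \to C(c\Sigma^n k\{V\}) \otimes C(c\Sigma^n k\{W\})$ is a pair $(f_V, f_W)$, and $F_V \otimes F_W$ acts on it as $(F_V \circ f_V, F_W \circ f_W)$, so being fixed amounts to each component being individually fixed, matching the universal property of $\ev_V \scr Q_n \otimes \ev_W \scr Q_n$.

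Parts (i) and (ii) then follow formally. Any finite $V$ is a finite wedge of copies of $S^0$, so the product formula writes $\ev_V \scr Q_n$ as a finite tensor power of $\ev_{S^0} \scr Q_n$; (i) follows from Lemma~\ref{lemm:mandell} and the closure of pro-connective, pro-constant-up-to-pro-truncation, finite-dimensional-homotopy objects under finite tensor products (via Künneth over $k$, Lemma~\ref{lem:pro-trunc-sym}, and Lemma~\ref{lemm:detect-constant}). For (ii), Corollary~\ref{cor:pullback} identifies the pullback of \eqref{eq:key-pullback} with $M^{\mathrm{cA}} \ev_W \scr Q_n$; by (i) and Lemma~\ref{lemm:coalg-M} this equals $M \ev_W \scr Q_n$, and the tensor decomposition together with Lemma~\ref{lemm:mandell} and Künneth identifies it with $k[\Omega^\infty \Sigma^n \F_p\{W\}]$, compatibly with the canonical comparison map of Construction~\ref{cnstr:AS-square} by naturality. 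The main obstacle will be the mate-and-naturality calculation establishing $F_{V \vee W} = F_V \otimes F_W$.
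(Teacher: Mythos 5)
Your proposal is correct and takes essentially the same route as the paper: the paper's proof also rests on naturality in the $\Set_*^\ainj$-variable (automatic since everything lives in $\Pro(\Fun(\Set_*^\ainj,\Mod_k))$) plus the fact that $C$ and $c$ preserve finite products, so that the collapse maps turn the entire defining square \eqref{second-square} into a product diagram and the pullback $\scr Q_n$ decomposes because pullbacks commute with products --- your reformulation via the equalizer of $\id$ and $F$ together with the identity $F_{V\vee W} \cong F_V \otimes F_W$ is the same mechanism in slightly different packaging. Your deduction of (i) and (ii) --- Mandell's lemma for $S^0$ as base case, closure of connectivity/pro-constancy/finite-dimensionality under tensor products, K\"unneth on the upper-left corner, and compatibility of the comparison map with the decompositions --- likewise matches the paper's induction on $|V|$.
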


\begin{proof}  
  Let $V, W \in \Set_*^\ainj$.
  We have the ``projection maps'' $V \vee W \to V$ and $V \vee W \to W$, collapsing the other set to the base point.  
  Upon evaluating $\scr V$ at these maps we obtain actual projection maps for a product diagram in $\Mod_k$.
  As $C$ and $c$ both preserve finite products we learn that the square \eqref{second-square} sends these maps to a product diagram.\footnote{Recall that the product of coalgebras is given by the tensor product.}

  We now prove (i) and (ii) by induction on $|V|$.
  The base case $V \cong S^0$  is \Cref{lemm:mandell}.
  Now write $V \cong W \vee S^0$.
  For (i) we observe that the collection of objects which are connective and pro-constant up to pro-truncation with finite dimensional homotopy groups
  is closed under tensor products. The main part of the lemma and our inductive hypothesis now lets us conclude $\ev_V \scr Q_n$ has these properties as well.
  For (ii) we observe that the map
  \[ k[\Omega^\infty\Sigma^n \F_p\{W \vee S^0\}] \to k[\Omega^\infty\Sigma^n \F_p\{ W\}] \otimes k[\Omega^\infty\Sigma^n \F_p\{ S^0 \}] \]
  is also an isomorphism. Using \Cref{cor:pullback} together with the main part of the lemma, our inductive hypothesis and the fact that $M^{\mathrm{cA}}$ preserves products we conclude that the square is pullback for $V$ as well.
\end{proof}

Now we prove the main result of this section.

\begin{proposition} \label{prop:key}
  For every $W \in \Set_*^\ainj$ and $n \ge 1$ the square \eqref{eq:key-pullback} is a pullback square.
\end{proposition}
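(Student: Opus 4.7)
My plan is to extend the finite-$V$ case (\Cref{lem:finite-case}(ii)) to arbitrary $W \in \Set_*^\ainj$ by writing $W$ as the filtered colimit of its finite sub-pointed-sets and exhibiting polynomial structure on both sides of the square. The strategy is to show that $W \mapsto \pi_i k[\Omega^\infty \Sigma^n \F_p\{W\}]$ and $W \mapsto \pi_i M^{\mathrm{cA}} \ev_W \scr Q_n$ are both polynomial functors of $W$ which agree on finite sets, so that the natural comparison map is an isomorphism on all $W$ (since polynomial functors commute with filtered colimits by definition, a natural transformation of polynomial functors is determined by its values on finite sets). The left side is polynomial of bounded degree by Cartan--Serre's classical computations of $H_i(K(V, n); k)$ for $V$ a vector space.

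For the right side, by \Cref{cor:pullback} the pullback is $M^{\mathrm{cA}} \ev_W \scr Q_n$, so I need to show that $\pi_i(\scr Q_n) \in \Pro(\scr C^\heart)$ is pro-constant with a polynomial value. The pro-object lies in $\Pro(\PolyFun)$: each term $(\scr Q_n)_\lambda$ is built by finite limits, via the pullback defining $\scr Q_n$ together with the formal-product and skeletal-filtration decomposition of \Cref{lemm:cofree-pro}, from polynomial functors in $\scr C$, and \Cref{lem:poly-closure} preserves polynomiality. The pro-system is countable, and \Cref{cor:polyfun-artinian} and \Cref{ex:proj-gens} supply the remaining hypotheses of \Cref{lemm:constancy}, \emph{except} the existence of a limit in $\PolyFun$. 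The candidate is the polynomial functor $L_i$ defined on finite $V$ by $L_i(V) = H_i(K(\F_p\{V\}, n); k)$ (polynomial of bounded degree in $V$) and extended polynomially. By the finite case combined with \Cref{lemm:mandell}, $L_i(V) = \lim_\lambda \pi_i(\ev_V(\scr Q_n)_\lambda)$ for finite $V$; since $\Hom$ out of any polynomial functor is determined by restriction to finite sets, this shows $L_i$ is the limit in $\PolyFun$. Applying \Cref{lemm:constancy} then yields pro-constancy of $\pi_i(\scr Q_n)$, which upon evaluating at arbitrary $W$ produces pro-constancy of $\pi_i(\ev_W \scr Q_n) \in \Pro(\Mod_k^\heart)$ with value $L_i(W)$.

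With pro-constancy established, \Cref{lemm:detect-constant} and \Cref{lemm:coalg-M} identify $M^{\mathrm{cA}} \ev_W \scr Q_n$ with $M \ev_W \scr Q_n$, whose $\pi_i$ equals $L_i(W)$. The natural comparison map from $k[\Omega^\infty \Sigma^n \F_p\{W\}]$ thus induces a natural transformation between two polynomial functors of $W$ at every homotopy-group level, an isomorphism on finite $W$ by \Cref{lem:finite-case}(ii), and hence on all $W$. The main obstacle I anticipate is the middle paragraph: verifying cleanly that $\pi_i(\scr Q_n)$ really lies in $\Pro(\PolyFun)$ and that $L_i$ is genuinely its limit there. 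The subtle point is that limits in $\PolyFun$ do not in general coincide with limits in the ambient $\scr C^\heart$ (since limits in $\scr C^\heart$ are computed pointwise and need not commute with the filtered colimits defining polynomial extension), so one must argue via the hom-functor and finite-set restriction rather than by pointwise calculation.
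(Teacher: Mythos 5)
Your proposal is correct and takes essentially the same approach as the paper: place $\scr Q_n$ in $\Pro_\omega(\scr C^{\poly})$, apply \Cref{lemm:constancy} (via \Cref{cor:polyfun-artinian} and \Cref{ex:proj-gens}) to get pro-constancy of $\pi_i \scr Q_n$ — producing the required limit in $\PolyFun$ exactly as you do, through full faithfulness of restriction to finite pointed sets — and then conclude with \Cref{lemm:detect-constant}, \Cref{lemm:coalg-M}, and reduction to the finite case (\Cref{lem:finite-case}) using that polynomial functors commute with filtered colimits. The only cosmetic difference is that the paper keeps the limiting polynomial functor abstract, proving polynomiality of its finite-set restriction by induction on $i$ via the wedge/K\"unneth decomposition, rather than identifying it as $H_i(K(\F_p\{-\},n);k)$.
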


\begin{proof}
  We begin by observing that because $c \scr V \in \Pro_{\omega}(\scr P^{\poly})$
  and this subcategory is closed under countable limits and tensor products, \Cref{lemm:cofree-pro} implies that
  $C(c \scr V)$ lies in $\Pro_{\omega}(\scr P^{\poly})$.
  Similarly, expanding $\scr Q_n$ as a totalization of a diagram of products (tensor products on underlying) of copies of $C(c \scr V)$ and $ck$
  we see that $\scr Q_n$ lies in $\Pro_{\omega}(\scr P^{\poly})$ as well.

  The next step is analyzing the homotopy groups $\pi_i\scr Q_n \in \Pro (\scr C^{\poly})^{\heart}$.
  We begin by analyzing the image under the functor
  \[ \ev : \Pro (\scr C^{\poly})^{\heart} \to \Fun(\Fin_*^\ainj, \Pro \Mod_k^\heart) \]
  Using \Cref{lem:finite-case} we conclude that
  $ \ev (\pi_i \scr Q_n) \in \Fun(\Fin_*^\ainj, \Pro \Mod_k^\heart) $
  is zero for $i < 0$,
  pro-constant with value $k$ for $i=0$
  and pro-constant satisfying
  \[ \ev (\pi_i \scr Q_n)(S^0 \vee W) \cong \ev (\pi_i \scr Q_n)(W) \oplus \bigoplus_{j=1}^i \ev (\pi_j \scr Q_n)(S^0) \otimes \ev (\pi_{i-j} \scr Q_n)(W) \]
  for $i > 0$.
  In particular, proceeding by induction on $i$ and using the fact that a functor $F$ is polynomial if $D(F)$ is polynomial
  we see that $\ev (\pi_i \scr Q_n)$ is pro-constant with value a polynomial functor.

  As the (finite) evaluation functor
  \[ (\scr C^{\poly})^{\heart} = \PolyFun \to \Fun(\Fin_*^\ainj, \Mod_k^\heart) \]
  is fully faithful, it reflects limits.
  We may then interpret the above as saying that each of the pro-objects $\pi_i \scr Q_n$ admits a limit in $(\scr C^{\poly})^{\heart}$.
  Applying Lemma \ref{lemm:constancy} using Corollary \ref{cor:polyfun-artinian}, Example \ref{ex:proj-gens} and the fact that these are countable pro-objects,
  we learn that $\pi_i \scr Q_n$ is pro-constant, polynomial and vanishes for $i<0$.

  For $W \in \Set_*^\ainj$, we know that $\ev_W \scr Q_n$ is pro-(bounded below).
  As $\pi_i \scr Q_n$ is pro-constant and vanishes for $i<0$, the homotopy groups
  $\pi_i \ev_W \scr Q_n \cong \ev_W \pi_i \scr Q_n$ share these properties.
  Applying \Cref{lemm:detect-constant} we conclude that $\ev_W \scr Q_n$ is pro-constant up to pro-truncation.  
  Using Lemma \ref{lemm:coalg-M} we learn that $M^{\mathrm{cA}} \ev_W \scr Q_n  \cong M \ev_W \scr Q_n$.
  The homotopy groups of this object are given by evaluating the polynomial functor $M (\pi_* \scr Q_n)$ at $W$.
  In particular they are compatible with filtered colimits in $W$.
  Since the same is true for the left hand side of \eqref{eq:key-pullback}, the proof is reduced to the case where $W$ is finite
  (where the conclusion was proved in \Cref{lem:finite-case}).
\end{proof}

We conclude the section by using the Artin--Schreier pullback square to analyze coalgebra maps into chains on an Eilenberg--MacLane space.

\begin{lemma} \label{lemm:spacelike-homology-maps}
  Let $k$ be separably closed, let $E \in \cCAlg_k$ and let $V$ be an $\F_p$-vector space.
  If $\pi_i(E)$ and $\pi_{i-1}(E)$ are \solvable, then the canonical map
  \[ \pi_i : \pi_0\Map_{\cCAlg_k}(E, k[K(V, i)]) \to \Hom_{\VectFk}(\pi_i(E), k \otimes V ) \]
  is an isomorphism.
\end{lemma}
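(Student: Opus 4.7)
The plan is to apply $\Map_{\cCAlg_k}(E,-)$ to the Artin--Schreier pullback square of \Cref{prop:key} and analyze the resulting long exact sequence. To invoke \Cref{prop:key}, I first choose an $\F_p$-basis of $V$ to identify $V \cong \F_p\{W\}$ for some $W \in \Set_*^\ainj$; then $K(V,i) \cong \Omega^\infty \Sigma^i \F_p\{W\}$ and $k\{W\} \cong k \otimes V$. Since $k$ is terminal in $\cCAlg_k$ (so $\Map_{\cCAlg_k}(E,k)\simeq *$), and the cofree--forgetful adjunction gives $\Map_{\cCAlg_k}(E, C_k(-)) \simeq \Map_{\Mod_k}(E,-)$, applying $\Map_{\cCAlg_k}(E,-)$ to \eqref{eq:key-pullback} yields a fiber sequence
\[ \Map_{\cCAlg_k}(E, k[K(V,i)]) \to \Map_{\Mod_k}(E, \Sigma^i k\otimes V) \xrightarrow{1-F_*} \Map_{\Mod_k}(E, \Sigma^i k\otimes V). \]
Unwinding the construction of the comparison map $k[K(V,i)] \to C_k(\Sigma^i k\otimes V)$ in \Cref{cnstr:AS-square} through the adjunction shows that the map $\pi_i$ of the statement agrees with the map on $\pi_0$ induced by this fiber sequence, so its long exact sequence reads
\[ \Hom_k(\pi_{i-1}E, k\otimes V) \xrightarrow{1-F_*} \Hom_k(\pi_{i-1}E, k\otimes V) \to \pi_0\Map_{\cCAlg_k}(E, k[K(V,i)]) \xrightarrow{\pi_i} \Hom_k(\pi_i E, k\otimes V) \xrightarrow{1-F_*} \Hom_k(\pi_i E, k\otimes V). \]

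The core of the proof is identifying $1-F_*$ on $\Hom_k(\pi_j E, k \otimes V)$ for $j = i, i-1$. Extending \Cref{rmk:F-vs-F} from $W = S^0$ to arbitrary $W$ by the same naturality argument, $F_*$ sends $f$ to $(\varphi^*f) \circ \mathfrak{F}_{\pi_j E}$, where we use the canonical identification $\varphi^*(k \otimes V) \cong k \otimes V$ arising from $V$ being an $\F_p$-vector space. Using the solvability assumption, I write $\pi_j E \cong k \otimes (\pi_j E)^0$ with $\mathfrak{F} = \mathrm{can}$; transporting along $\Hom_k(k \otimes U, k \otimes V) \cong \Hom_{\F_p}(U, k \otimes V)$, the operation $1 - F_*$ becomes postcomposition with the Artin--Schreier map $x \mapsto x - x^p$ on $k \otimes V$.

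By the Artin--Schreier sequence (cf.\ the proof of \Cref{lem:solvable-ff}), since $k$ is separably closed, $x - x^p$ is surjective on $k$ and hence on $k \otimes V$ with kernel $V$. Therefore $\ker(1 - F_*) = \Hom_{\F_p}((\pi_j E)^0, V) = \Hom_{\VectFk}(\pi_j E, k \otimes V)$ by \Cref{lem:solvable-ff}, and $1 - F_*$ is surjective on $\Hom_k(\pi_{i-1}E, k \otimes V)$. Feeding these two facts into the long exact sequence above shows that $\pi_i$ is an isomorphism onto $\Hom_{\VectFk}(\pi_i E, k \otimes V)$, as claimed. The main delicate step is the explicit identification of $F_*$ on $\Hom_k(\pi_j E, k \otimes V)$ in terms of $\mathfrak{F}_{\pi_j E}$ and the Artin--Schreier endomorphism of $k \otimes V$; everything else is a formal diagram chase through the long exact sequence.
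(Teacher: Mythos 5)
Your proposal is correct and follows essentially the same route as the paper: apply $\Map_{\cCAlg_k}(E,-)$ to the pullback square of \Cref{prop:key}, use the cofree adjunction and the identification of $1-F$ via \Cref{rmk:F-vs-F}, and then exploit solvability together with separable closedness in the resulting long exact sequence. The only difference is presentational—the paper packages the kernel and the surjectivity statement as $\mathrm{Ext}^0_{\VectFk}$ and the vanishing of $\mathrm{Ext}^1_{\VectFk}$ via \Cref{lem:solvable-ff}, whereas you compute both directly from the Artin--Schreier sequence.
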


\begin{proof}
  Proposition \ref{prop:key} supplies us with a pullback square
  \begin{equation*}
    \begin{CD}
      \Map_{\cCAlg_k}(E, k[K(V, i)]) @>>> \Map_{\Mod_k}(E, \Sigma^i k \otimes V) \\
      @VVV  @V{1-F}VV \\
      * @>>> \Map_{\Mod_k}(E, \Sigma^i k \otimes V)
    \end{CD}
  \end{equation*}
  and hence an exact sequence
  \begin{gather*} [\Sigma E, \Sigma^i k \otimes_{\F_p} V]_{\Mod_k} \xrightarrow{F-1} [\Sigma E, \Sigma^i k \otimes_{\F_p} V]_{\Mod_k} \to \\ \to \pi_0 \Map_{\cCAlg_k}(E, k[K(V, i)]) \to [E, \Sigma^i k \otimes_{\F_p} V]_{\Mod_k} \xrightarrow{F-1} [E, \Sigma^i k \otimes_{\F_p} V]_{\Mod_k}. \end{gather*}
  Rewriting this exact sequence using the isomorphisms
  $[E, \Sigma^i k \otimes V]_{\Mod_k} \wequi \Hom(\pi_i E, k \otimes V)$ and $[\Sigma E, \Sigma^i k \otimes V]_{\Mod_k} \wequi \Hom(\pi_{i-1} E, k \otimes V)$
  and identification of the map $1-F$ from \Cref{rmk:F-vs-F} we obtain short exact sequences
  \[ 0 \to \mathrm{Ext}^1_{\VectFk}(\pi_{i-1}E, k \otimes V) \to \pi_0 \Map_{\cCAlg_k}(E, k[K(V, i)]) \to \mathrm{Ext}^0_{\VectFk}(\pi_iE, k \otimes V) \to 0. \]
  The assumption that $\pi_{i-1}E$ and $\pi_iE$ are \solvable, together with the fully faithfulness from \Cref{lem:solvable-ff} now complete the proof.
\end{proof}

\section{Main result} \label{sec:main}

Denote by $\Spc_p^{\mathrm{nil}}$ the category of nilpotent (which for us does not require connected) \cite[Chapter II, \S4.3]{bousfield1987homotopy}, $p$-complete (i.e., local for the functor $\F_p[-]$) spaces.
We view $\Spc_p^{\mathrm{nil}}$ as a cartesian symmetric monoidal $\infty$-category.
By the Künneth theorem, the functor
\[ k[-]: \Spc_p^{\mathrm{nil}} \to \Mod_k \]
is symmetric monoidal and hence induces
\[ k[-]: \cCAlg(\Spc_p^{\mathrm{nil}}) \to \cCAlg(\Mod_k). \]
Since $\Spc_p^{\mathrm{nil}}$ is \emph{cartesian} symmetric monoidal, the forgetful functor
$\cCAlg(\Spc_p^{\mathrm{nil}}) \to \Spc_p^{\mathrm{nil}}$
is an equivalence \cite[Corollary 2.4.3.10]{HA}.
The following is our main result.

\begin{theorem} \label{thm:main}
  Let $k$ be a separably closed field of characteristic $p$.
  The canonical functor
  \[ k[-] \colon \Spc_p^{\mathrm{nil}} \to \cCAlg_k \]
  is fully faithful.
\end{theorem}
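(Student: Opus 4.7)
The plan is to reduce to the Eilenberg--MacLane case via a Bousfield--Kan cosimplicial $\F_p$-Adams resolution, where Lemma \ref{lemm:spacelike-homology-maps} applies directly. Concretely, for $X \in \Spc_p^{\nil}$, $V$ an $\F_p$-vector space and $n \ge 1$, Corollary \ref{cor:pi-star-solvable} ensures every $\pi_i k[X]$ is \solvable, so Lemma \ref{lemm:spacelike-homology-maps} identifies $\pi_0 \Map_{\cCAlg_k}(k[X], k[K(V,n)])$ with $\Hom_{\VectFk}(\pi_n k[X], k \otimes V)$. Since $\pi_n k[X] \cong k \otimes_{\F_p} H_n(X; \F_p)$, the fully faithfulness from Lemma \ref{lem:solvable-ff} further identifies this with $\Hom_{\F_p}(H_n(X; \F_p), V) \cong H^n(X;V) \cong \pi_0 \Map_{\Spc}(X, K(V,n))$. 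The higher homotopy groups of the mapping space follow similarly by reindexing in $n$, giving the full mapping-space equivalence when $Y = K(V, n)$.

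For a general $Y \in \Spc_p^{\nil}$ (reducing first to the connected case), I would use the Bousfield--Kan cosimplicial resolution $Y \simeq \mathrm{Tot}(Y_\bullet)$ with $Y_m = T^{m+1}Y$ for $TY = \Omega^\infty(\F_p \otimes \Sigma^\infty_+ Y)$; each $Y_m$ is an (infinite) product of Eilenberg--MacLane spaces $K(V, n)$ with $V$ an $\F_p$-vector space. On the space side $\Map(X, Y) \simeq \lim_m \Map(X, Y_m)$ immediately. On the coalgebra side, the key point is that $k[Y] \simeq \lim_m k[Y_m]$ in $\cCAlg_k$, and this is where Corollary \ref{corr:proconst-limits} enters: one applies it to the pro-object $\{k[Y_m]\}_m \in \Pro(\Mod_k)$, which is pro-(bounded below) since each term is connective, and pro-constant up to pro-truncation by Lemma \ref{lemm:detect-constant} combined with the convergence of the Adams tower. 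Individual $Y_m$, being products of Eilenberg--MacLane spaces, are then handled via iterated K\"unneth and a Postnikov argument reducing to the Eilenberg--MacLane case above.

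The main technical obstacle is the second step: limits in the category $\cCAlg_k$ of coalgebras are notoriously badly behaved, and the entire pro-object formalism of \S\ref{sec:pro} was developed precisely to address this. The essential input is the pro-constancy up to pro-truncation of the pro-system $\{k[Y_m]\}_m$; once this is in place, everything else is essentially formal. By contrast, the Eilenberg--MacLane case is a direct consequence of Lemma \ref{lemm:spacelike-homology-maps} and the solvability of the homotopy groups $\pi_i k[X]$ established in Corollary \ref{cor:pi-star-solvable}.
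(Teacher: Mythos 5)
Your plan follows the paper's architecture quite closely (Eilenberg--MacLane case from the Artin--Schreier pullback square, then a cosimplicial $\F_p$-Adams resolution handled with the pro-object formalism), but the step you yourself identify as the crux contains a genuine gap. To apply Corollary \ref{corr:proconst-limits} to the cosimplicial diagram $k[Y^\bullet]$ you must show that its underlying pro-object in $\Mod_k$ --- the tower of partial totalizations $\{\mathrm{Tot}_m\, U k[Y^\bullet]\}_m$ --- is pro-constant up to pro-truncation. Your justification, ``Lemma \ref{lemm:detect-constant} combined with the convergence of the Adams tower,'' does not supply this: Lemma \ref{lemm:detect-constant} takes pro-constancy of the homotopy pro-groups as an \emph{input} rather than producing it, and convergence of the Adams tower is the statement $Y \simeq \lim_\Delta Y^\bullet$ in \emph{spaces}, which says nothing about the homology of the partial totalizations; using it to control $k[\lim_\Delta Y^\bullet]$ is circular, since commuting $k[-]$ past exactly this limit is what is being proved. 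Moreover, even granting pro-constancy, Corollary \ref{corr:proconst-limits} only identifies the limit of $k[Y^\bullet]$ in $\cCAlg_k$ with its limit in $\Mod_k$; you still owe the separate statement $U k[Y] \simeq \lim_\Delta U k[Y^\bullet]$, a homology-convergence theorem which appears as an explicit additional hypothesis (property (b)) in the paper's proof.

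The missing idea is a splitting trick. Since $Y^\bullet$ is the cobar construction of the adjunction $\F_p \otimes \Sigma^\infty \dashv \Omega^\infty$, after choosing a basepoint the coaugmented cosimplicial object $\F_p\{Y\} \to \F_p\{Y^\bullet\}$ is \emph{split} \cite[\S 4.7.3]{HA}, hence so is $U k[Y^\bullet] \simeq k\{Y^\bullet\} \oplus k$ after scalar extension; split coaugmented cosimplicial objects are universal limit diagrams with constant underlying pro-object, which yields pro-constancy and the identification $U k[Y] \simeq \lim_\Delta U k[Y^\bullet]$ simultaneously. Without this (or some substitute --- note that Bousfield's theorem as formulated in Theorem \ref{thm:bousfield-convergence}(2) also \emph{assumes} pro-constancy), your induction cannot close. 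Two smaller divergences are worth noting. First, the paper begins by observing that full faithfulness is equivalent to the unit $X \to \Map_{\cCAlg_k}(k, k[X])$ being an equivalence, so it only ever maps out of $k$; this makes its Eilenberg--MacLane step an immediate consequence of the pullback square (no appeal to Lemma \ref{lemm:spacelike-homology-maps} or solvability) and makes the disconnected case exactly Proposition \ref{prop:coprod-coalg}. In your two-variable setup, maps from $k[X]$ into an infinite coproduct $\coprod_i k[Y_i]$ (needed for your reduction to connected $Y$) are \emph{not} covered by Proposition \ref{prop:coprod-coalg}, whose source is $\1$ and whose proof uses compactness of $\1$. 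Second, your ``reindexing in $n$'' for the higher homotopy groups of the mapping spaces needs an actual argument --- e.g. via the grouplike $\E_\infty$-structure on $\Map_{\cCAlg_k}(k[X], k[K(V,n)])$, or by mapping out of $X \times S^j$ --- though this point is fixable.
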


\begin{proof}
  The functor $k[-]$ has a right adjoint, given by $\Map_{\cCAlg_k}(k, -)$.
  We must show that the unit transformation
  \[ X \to \Map_{\cCAlg_k}(k, k[X]) \]
  is an equivalence for all $X \in \Spc_p^{\mathrm{nil}}$.
  Let $\DD \subseteq \Spc_p^{\mathrm{nil}}$ be the full subcategory of $p$-nilpotent spaces
  for which this unit transformation is an equivalence.
  Before proceeding we highlight some closure properties of $\DD$:
  \begin{enumerate}
  \item[(a)] $\DD$ is closed under finite products.
  \item[(b)] If $X^\bullet$ is a cosifted diagram with each $X^\alpha \in \DD$ such that
    the underlying pro-object of the $k[X^\bullet]$ is pro-constant up to pro-truncation
    and $U(k[ \lim X^\bullet ]) \cong \lim U(k[X^\bullet])$, then $\lim X^\bullet \in \DD$.
  \item[(c)] $\DD$ is closed under arbitrary coproducts.
  \end{enumerate}

  Property (a) follows from the fact that $k[-]$ is a symmetric monoidal functor
  and both the source and target categories have the cartesian symmetric monoidal structure.
  Property (b) follows from the fact that $U$ is conservative and  
  Corollary \ref{corr:proconst-limits}.
  Property (c) holds because the composite $\Map_{\cCAlg_k}(k, k[\ph])$ preserves coproducts by Proposition \ref{prop:coprod-coalg}.

  \textbf{Step (1):} Eilenberg--MacLane spaces.\\  
  Let $X \coloneqq \Omega^\infty \Sigma^n \F_p\{S\}$ with $S \in \Set_*^\ainj$ and $n \ge 1$.
  Applying Proposition \ref{prop:key} we obtain a pullback square
  \begin{equation*}
    \begin{CD}
      k[\Omega^\infty \Sigma^n \F_p\{S\}] @>>> C(\Sigma^n k\{S\}) \\
      @VVV         @V{1-F}VV \\
      * @>>> C(\Sigma^n k\{S\}).
    \end{CD}
  \end{equation*}
  Applying $\Map_{\cCAlg_k}(k,-)$ to this pullback square and
  using the description of the Frobenius co-operation from Remark \ref{rmk:Frob-Q0} we obtain
  the pullback square
  \[ \begin{tikzcd}
    \Map_{\cCAlg_k}(k, k[\Omega^\infty \Sigma^n \F_p\{S\}]) \ar[r] \ar[d] &
    \Omega^\infty \Sigma^n k\{S\} \ar[d, "{( (-) - (-)^p ) \otimes \F_p\{S\}}"] \\
    * \ar[r] &
    \Omega^\infty \Sigma^n k\{S\}.
  \end{tikzcd} \]
  Using the fact that $k$ is separably closed,
  this pullback can be expressed as a sum over $S$ of copies of $\Omega^\infty$ of
  the Artin--Schreier fiber sequence
  \[ \F_p \to k \xrightarrow{x \mapsto x - x^p} k. \]
  We may now read off that $X \in \DD$.
  
  \textbf{Step (2):} Generalized Eilenberg--MacLane spaces.\\
  Let $X \coloneqq \prod K(\pi_n, n)$ with each $\pi_n \in \Mod_{\F_p}^\heartsuit$.
  If we let $X_j \coloneqq \prod_{n \leq j} K(\pi_n,n)$,
  then property (a) and step (1) imply that $X_j \in \DD$.
  We now apply property (b) to conclude that $X \in \DD$.
  For this it suffices to observe that
  limits of Postnikov towers commute with $k[-]$
  and that $\tau_{\leq j}k[X] \cong \tau_{\leq j}k[X_j]$ (this gives pro-constancy).
  
  
  \textbf{Step (3):} The connected case.\\
  Let $X \in \Spc_p^{\mathrm{nil}}$ be connected.
  Choose a base point in $X$, i.e., a lift of $X$ to $\Spc_*$.
  Write $X^\bullet$ for the cosimplicial object with
  $X^n = (\Omega^\infty \circ \F_p \otimes \Sigma^\infty)^{\circ (n+1)} X$.
  Since $X$ is nilpotent and $p$-complete,
  we know that $X \cong \lim X^\bullet$ \cite[Proposition VI.6.2]{bousfield1987homotopy}.
  Note that from step (2) we know that each $X^k \in \DD$.
  Using the base-points of the $X^k$ we obtain an equivalence of
  co-augmented cosimplicial objects
  \[ U(k[X^\bullet]) \to (k \{X^\bullet\}) \oplus k. \]
  The right-hand term is split, being obtained by scalar extension from $\F_p \{X^\bullet \}$ which is split \cite[\S4.7.3]{HA}, and hence has constant underlying pro-object (being a universal limit diagram \cite[Lemma 6.1.3.16]{HTT}).
  Applying property (b) we conclude.

  \textbf{Step (4):} The general case.
  Let $X \in \Spc_p^{\mathrm{nil}}$.
  We have \[ X = \coprod_{i \in \pi_0 X} X_i, \] with $X_i$ connected and nilpotent.
  The result is thus immediate from step (3) and property (c) of $\DD$.
\end{proof}



We now proceed to the task of determining the essential image of simply connected $p$-adic spaces in $\cCAlg_k$.

\begin{theorem} \label{thm:charact}
  Let $k$ be a separably closed field of characteristic $p$.
  The essential image of the functor
  \[ k[-] \colon (\Spc_p)_{\geq 2} \to \cCAlg_k \]
  is given by those $E \in \cCAlg_k$ such that
  $\pi_i(E) = 0$ for $i<0$, $\pi_0(E) = k$, $\pi_1(E) = 0$, and $\pi_i(E)$ is \solvable{} for every $i$.
\end{theorem}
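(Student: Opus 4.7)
The ``only if'' direction is direct. For $X \in (\Spc_p)_{\geq 2}$, the module $k[X]$ is connective, $\pi_0(k[X]) = H_0(X;k) = k$ by connectedness, $\pi_1(k[X]) = H_1(X;k) = 0$ by Hurewicz and simple connectedness, and solvability of each $\pi_i(k[X])$ is \Cref{cor:pi-star-solvable}.

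For the ``if'' direction, given such $E$, the plan is to realize $X$ as a homotopy limit of a Postnikov tower in simply connected $p$-complete spaces. Using solvability, write $\pi_i(E) \cong k \otimes V_i$ where $V_i \coloneqq \pi_i(E)^0 \in \Mod_{\F_p}^{\heart}$ for $i \geq 2$. I would inductively construct a tower $X_1 \leftarrow X_2 \leftarrow X_3 \leftarrow \cdots$ of simply connected $p$-complete spaces with $\pi_i(X_n) = V_i$ for $i \leq n$ and $0$ otherwise, together with coalgebra maps $\phi_n \colon E \to k[X_n]$ which are isomorphisms on $\pi_i$ for $i \leq n$. Taking $X \coloneqq \lim_n X_n$, which is again simply connected and $p$-complete, the $\phi_n$ assemble (using \Cref{corr:proconst-limits} applied to the Postnikov tower) into $\phi \colon E \to k[X]$ that is an isomorphism on all homotopy groups, hence an equivalence.

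The base case is $X_1 = *$. For the inductive step $n \to n+1$, apply \Cref{lemm:spacelike-homology-maps} to the identity element of $\Hom_{\VectFk}(\pi_{n+1}(E), k \otimes V_{n+1})$ to produce a canonical map $E \to k[K(V_{n+1}, n+1)]$; this is the ``$(n+1)$-st fundamental class'' of $E$. The ``discrepancy'' between $\phi_n$ and this class, interpreted through the Artin--Schreier pullback square of \Cref{prop:key}, provides a coalgebra map $k[X_n] \to k[K(V_{n+1}, n+2)]$, which by the fully faithfulness theorem (\Cref{thm:main}) descends to a $k$-invariant $\kappa_n \colon X_n \to K(V_{n+1}, n+2)$. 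Set $X_{n+1} \coloneqq \mathrm{fib}(\kappa_n)$, which is simply connected $p$-complete with the desired homotopy groups; applying $k[-]$ to this pullback of spaces (using again \Cref{thm:main} to identify it with a pullback of coalgebras) furnishes $\phi_{n+1}$ by the universal property.

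The main obstacle will be the precise construction of $\kappa_n$ and the verification that $\phi_n$ extends to $\phi_{n+1}$: one must carefully extract the $k$-invariant from the Frobenius-equivariant Postnikov data of $E$ via the Artin--Schreier pullback, rather than a naive Postnikov truncation (which does not obviously exist in $\cCAlg_k$ due to the non-preservation of limits by the forgetful functor). A secondary technical point is ensuring that $k[\lim X_n] \cong \lim k[X_n]$ in $\cCAlg_k$, which should follow by invoking \Cref{corr:proconst-limits} once one checks pro-constancy of the underlying homotopy groups---guaranteed by the fact that the tower has only one new polynomial homotopy layer at each stage.
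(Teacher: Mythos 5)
The ``only if'' half of your proposal is fine, but the inductive setup in your ``if'' direction contains a genuine error: you conflate the homotopy groups of the space you are trying to build with the homotopy groups of the coalgebra $E$. If $E \simeq k[X]$ then $\pi_i(E) = H_i(X;k)$ is the \emph{homology} of $X$, so $V_i = \pi_i(E)^0$ is a Frobenius-rational form of homology classes, not of homotopy groups; a space $X_n$ with $\pi_i(X_n) = V_i$ for $i \le n$ and $0$ above cannot in general satisfy your other requirement that $\phi_n\colon E \to k[X_n]$ be an isomorphism on $\pi_i$ for $i \le n$. Concretely, take $E = k[S^2_p]$: then $V_2 = \F_p$ and $V_i = 0$ for $i \neq 2$, so your prescription forces $X_n = K(\F_p,2)$ for all $n \ge 2$; but $H_3(K(\F_p,2);k) \neq 0$ (it contains the Bockstein of the fundamental class), while $\pi_3(E) = H_3(S^2;k) = 0$, so $\phi_3$ cannot be an isomorphism in degree $3$ and the induction breaks at its first nontrivial step. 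The underlying obstruction is that extracting the homotopy groups of the putative space from $E$ is essentially equivalent to computing $\Map_{\cCAlg_k}(k,E)$ itself, i.e.\ to the whole problem, so no Postnikov-tower scheme indexed by $\pi_*(E)^0$ can work. (A secondary gap: Theorem \ref{thm:main} is full faithfulness and does \emph{not} imply that $k[-]$ sends your fiber sequence of spaces to a pullback of coalgebras; for that one needs a convergence statement such as Theorem \ref{thm:bousfield-convergence}(3).)

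The paper's actual proof uses your ingredients but arranges them as an Adams-type homology resolution rather than a Postnikov tower, precisely to avoid needing the homotopy groups of $X$. It starts with the generalized Eilenberg--MacLane space $X_0 = \prod_{i \ge 2} K(\pi_i(E)^0, i)$ and the canonical map $E \to k[X_0]$ from Lemma \ref{lemm:spacelike-homology-maps}; this map is only required to be \emph{injective} on homotopy, not an isomorphism in any range. It then iteratively kills the excess: letting $C$ be the cofiber of $E \to k[X_n]$, its homotopy groups are again \solvable{} (Corollary \ref{cor:pi-star-solvable}), so there is a simply connected $C_0$ with $C \to k[C_0]$ injective on homotopy, and one sets $X_{n+1} = \mathrm{fib}(X_n \to C_0)$, using Theorem \ref{thm:bousfield-convergence}(3) to see that $k[-]$ preserves this fiber sequence and hence that $E \to k[X_n]$ lifts to $k[X_{n+1}]$. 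Convergence is then a pro-isomorphism statement: $\pi_i E \to H_i(X_\bullet;k)$ is a pro-isomorphism for each $i$, and Theorem \ref{thm:bousfield-convergence}(4) gives $k[\lim_i X_i] \simeq \lim_i k[X_i]$ computed on underlying $k$-modules, whence $E \simeq k[\lim_i X_i]$.
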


The strategy for proving this theorem will be to inductively construct a map from $R$ to chains on a space which acts as the $n$'th term in an Adams resolution attached to $R$. Before the proof we need some preliminaries which ensure we have enough convergence to implement this strategy.

\begin{theorem}[Bousfield] \label{thm:bousfield-convergence}
\begin{enumerate}
\item Let $X_1 \leftarrow X_2 \dots \in \Spc_*$ be an inverse system of connected, pointed spaces.
  Assume that the homotopy pro-groups $\pi_i(X_\bullet)$ are pro-constant for every $i$.
  Then $k[\lim_i X_i] \wequi \lim_i k[X_i] \in \Mod_k$.
\item Let $X^\bullet \in \Fun(\Delta, \Spc_*)$ be such that each $X^n$ is connected and $R$-nilpotent, $R=\Z$ or $R=\F_p$.
  Suppose that the homotopy pro-groups of $\F_p[X^\bullet]$ are pro-constant, and trivial in degrees $\le 1$.
  Then for any $R$-algebra $S$, $S[\lim_\Delta X^\bullet] \wequi \lim_\Delta S[X^\bullet] \in \Mod_S$.
\item Let $F \to X \to B$ be a fiber sequence of simply connected, pointed spaces.
  Then the fiber sequence is preserved by the functor $k[\ph]: \Spc \to \cCAlg_k$.
\item Let $X_1 \leftarrow X_2 \dots$ be an inverse system of simply-connected, pointed, $p$-complete spaces.
  Assume that the homology pro-groups $\pi_i(\F_p[X_\bullet])$ are pro-constant.
  Then the limit is preserved by the functors $\Spc \to \cCAlg_k \to \Mod_k$.
\end{enumerate}
\end{theorem}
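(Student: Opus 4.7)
The four statements are classical Bousfield--Kan convergence results, and my plan is to prove them in the order (1), (2), (3), (4). Statements (1), (2), (4) concern the underlying $\Mod_k$-valued functor; statement (3) takes place in $\cCAlg_k$ and will be reduced to a module-level statement via Corollary~\ref{corr:proconst-limits}. Throughout, the strategy is to compare Milnor-type exact sequences on the space side with their analogues after applying $k[-]$, using pro-constancy to kill the relevant $\lim^1$ terms.

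For (1), I would compare the Milnor exact sequences
\[ 0 \to \lim{}^1 \pi_{i+1}(-) \to \pi_i \lim(-) \to \lim \pi_i(-) \to 0 \]
applied to $X_\bullet$ and to $k[X_\bullet]$. Pro-constancy of $\pi_*(X_\bullet)$ kills $\lim^1$ on the space side. To propagate pro-constancy to $H_*(X_\bullet; k) = \pi_*(k[X_\bullet])$, I would induct along the Postnikov tower: each stage adjoins a $K(\pi_n, n)$ whose $k$-chains depend \emph{polynomially} on $\pi_n$ in the sense of \S\ref{set:polyfun} and hence preserve pro-constancy, while the Serre spectral sequence transports pro-constancy from one stage to the next. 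Consequently $\lim^1$ also vanishes on the chain side and the comparison map is an isomorphism.

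For (2), I would apply the Tot-tower $\mathrm{Tot}_n X^\bullet$. The Bousfield--Kan spectral sequence converges strongly under the given $R$-nilpotence and low-degree vanishing hypotheses, and pro-constancy of the $E_2$-page implies pro-constancy of the associated tower, to which part (1) applies. Part (3) is the Eilenberg--Moore convergence: I would resolve $F \simeq X \times_B^h *$ via a cobar cosimplicial object built from $X$ and $B$, with simple connectivity of $B$ ensuring convergence in the sense of part (2). The latter identifies $k[F]$ with the corresponding cobar totalization in $\Mod_k$; Corollary~\ref{corr:proconst-limits} then upgrades this to an identification in $\cCAlg_k$, exhibiting $k[F]$ as the fiber of $k[X] \to k[B]$. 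Finally for (4), simply-connected $p$-complete spaces are $\F_p$-nilpotent, and a mod-$p$ Hurewicz/Postnikov induction shows that pro-constancy of $\pi_* \F_p[X_\bullet]$ forces pro-constancy of $\pi_*(X_\bullet)$; part (1) then concludes.

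The main technical obstacle throughout is verifying that pro-constancy propagates through the various towers (Postnikov, Bousfield--Kan, cobar). Once this is established the convergence statements reduce to Mittag--Leffler vanishings handled by Bousfield's classical machinery, and the upgrade from $\Mod_k$ to $\cCAlg_k$ in part (3) is precisely what Corollary~\ref{corr:proconst-limits} was designed to provide.
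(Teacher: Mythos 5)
Your parts (1) and (3) are essentially sound and close to the paper: (1) is a hands-on version of the paper's argument that pro-constancy of homotopy pro-groups makes $X\to X_\bullet$ an equivalence of pro-truncated spaces (though your appeal to polynomiality is superfluous -- applying \emph{any} functor levelwise preserves pro-constancy, so no input from \S\ref{set:polyfun} is needed), and (3) is exactly the paper's proof: cobar resolution, part (2) with $R=\Z$, then \Cref{corr:proconst-limits} to upgrade from $\Mod_k$ to $\cCAlg_k$.

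The genuine gap is in part (4). You claim that for an inverse system of simply connected, $p$-complete spaces, pro-constancy of the homology pro-groups $\pi_*\F_p[X_\bullet]$ forces pro-constancy of the homotopy pro-groups $\pi_*(X_\bullet)$, and you then invoke part (1). This is false. Take $X_i = K(\Z/p^i,2)$ with transition maps induced by the reductions $\Z/p^{i+1}\to\Z/p^i$; these spaces are simply connected and $p$-complete. By Cartan's computation, $H^*(K(\Z/p^i,2);\F_p)$ is free graded-commutative on admissible Steenrod operations applied to the mod-$p$ fundamental class $u$ and to the $i$-th Bockstein class $v=\beta_{(i)}\iota$. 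The transition map sends $u\mapsto u$ and $v\mapsto 0$ (the $i$-th Bockstein vanishes on classes lifted to $\Z/p^{i+1}$-coefficients), so in each degree the homology pro-system splits as a constant system plus a system with zero transition maps; hence every $\pi_n\F_p[X_\bullet]$ is pro-constant. But $\pi_2(X_\bullet)=\{\Z/p^i\}$ with reduction maps is \emph{not} pro-constant: its limit is $\Z_p$, and every homomorphism $\Z/p^j\to\Z_p$ is zero, so no pro-inverse to $c(\Z_p)\to\{\Z/p^i\}$ can exist. This is precisely the situation (4) is designed to handle, and it is why the paper's proof takes a different route: each $X_i$ is resolved by its $\F_p$-cotriple (Adams) resolution $X_i^\bullet$; part (1) is applied levelwise to the towers $\{X_i^n\}_i$, whose \emph{homotopy} groups are built from the \emph{homology} of $X_i$ and so are pro-constant by hypothesis; the two limits are interchanged; and then Bousfield's theorem (2) is applied to the cosimplicial space $\lim_i X_i^\bullet$, with \Cref{corr:proconst-limits} supplying the coalgebra-level statement. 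Without some such maneuver, homological pro-constancy gives you no control on the homotopy pro-groups, and part (1) simply does not apply.

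A secondary issue: part (2) is the genuinely hard ingredient here -- it is Bousfield's strong-convergence theorem for the homology spectral sequence of a cosimplicial space, which the paper handles by citation (the theorem is attributed to Bousfield precisely for this reason). Your sketch asserts that ``the Bousfield--Kan spectral sequence converges strongly under the given hypotheses'' and reduces to part (1) via pro-constancy of the Tot-tower; this both assumes the conclusion and runs into the same obstruction as above, since the hypotheses of (2) control the homology of the cosimplicial levels, not the homotopy of the Tot-tower. Citing Bousfield for (2) is legitimate; deriving it from (1) is not.
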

\begin{proof}
(1) Let $X = \lim X_\bullet$.
Then the map $X \to X_\bullet$ is an equivalence in the category of pro-truncated spaces, and this property is preserved by the functor $k[-]$.
Now materialize into $\Mod_k$.

(2) This is \cite[Theorem 3.6]{bousfield-homology-spectral-sequence}.

(3)
Rewrite the fiber sequence as a totalization and apply (2) with $R=\Z$, to find that the totalization is preserved by $k[-]: \Spc \to \Mod_k$.
Similarly rewrite the fiber sequence as a totalization in $\cCAlg_k$ and use Corollary \ref{corr:proconst-limits} to find that the limit is preserved by $\cCAlg_k \to \Mod_k$.

(4) Let $X_i^\bullet \in \Fun(\Delta, \Spc_*)$ denote the cotriple resolution of $X_i$ via reduced $\F_p$-homology, so that $\lim_\Delta X_i^\bullet \wequi X_i$ by \cite[Proposition VI.6.2]{bousfield1987homotopy}.
Let $X_\infty^\bullet = \lim_i X_i^\bullet$; note that this consists of $\F_p$-nilpotent spaces (in fact $\F_p$-modules).
Set $X = \lim_i X_i = \lim_\Delta X_\infty^\bullet$.
We must show that $k[X] \wequi \lim_i k[X_i]$.
The assumptions and Corollary \ref{corr:proconst-limits} imply that the limit on the right hand side can be computed in either $\cCAlg_k$ or $\Mod_k$.
Note that $X_i^0$ is pro-constant up to pro-truncation by assumption, and hence the same is true for $X_i^n$.
It follows (by (1)) that $k[X_\infty^n] \wequi \lim_i k[X_i^n]$.
Consequently, writing $T(\ph)$ for the totalization in pro-objects, we find that \[ T(k[X_\infty^\bullet]) \wequi \lim_i T(k[X_i^\bullet]) \wequi \lim_i k[X_i]. \]
Here the last $\lim_i$ is computed in pro-objects, and the last equivalence holds because each $k[X_i^\bullet]$ is pro-constant.
The final pro-object is pro-constant up to pro-truncation by assumption, whence (2) applies to $X_\infty^\bullet$.
This proves the result.
\end{proof}

\begin{proof}[Proof of Theorem \ref{thm:charact}.]
First note that the condition is necessary.
The non-obvious point is that $\pi_i(k[X])$ is \solvable; this was proved in \Cref{cor:pi-star-solvable}.

We now establish sufficiency.
Thus let $E \in \cCAlg_k$ which is connected and such that all it homotopy groups are \solvable.
We shall now inductively construct a sequence of simply-connected $p$-complete spaces $X_0 \leftarrow X_1 \leftarrow \dots$ together with compatible maps $E \to k[X_i]$.
It will be the case that $\pi_i(E) \to H_i(X_n,k)$ is injective for every $i$ and $n$.
We set $X_0 = \prod_{i \ge 2} K(\pi_i(E)^0, i)$.
This product is preserved by $\Spc \to \cCAlg_k$ and, using Lemma \ref{lemm:spacelike-homology-maps}, we obtain a canonical map $E \to k[X_0]$.
Since
\[ \pi_i(E) \cong k \otimes \pi_i(E)^0 \subseteq \pi_i(k[K(\pi_i(E)^0, i)]) \]
(the first isomorphism being true because $\pi_i(E)$ is \solvable)
we see that $E \to X_0$ indeed induces an injection on homotopy.
Now suppose $X_n$ has been constructed.
Let $C$ be the cofiber of $E \to k[X_n]$.
It follows from Lemma \ref{cor:pi-star-solvable} that the homotopy groups of $C$ are \solvable, and hence there is a simply connected space $C_0$ together with a map $C \to k[C_0]$ which is injective on homotopy groups.
Let $X_{n+1}$ be the fiber of $X_n \to C_0$.

We see inductively that $\pi_2(E) \wequi H_2(X_n, k)$ and hence $\pi_2(C) = 0$.
Thus by construction $\pi_2(C_0) = 0$ and $X_{n+1}$ is simply connected.
(Also $H_3(C_0, k) \iso \pi_3(C)$ receives a surjective map from $H_3(X_n, k)$, from which one deduces via e.g. a Serre spectral sequence that $H_2(X_{n+1}, k) \iso H_2(X_n, k)$.)
Theorem \ref{thm:bousfield-convergence}(3) implies that the fiber sequence $X_{n+1} \to X_n \to C_0$ is preserved when passing to coalgebras.
By construction, the composite $E \to k[X_n] \to C \to C_0$ is null and hence $E \to k[X_n]$ lifts along $X_{n+1} \to X_n$.
In particular $E \to k[X_{n+1}]$ induces an injection on homotopy groups and the construction can continue.

We now obtain exact sequences
\[ 0 \to \pi_*E \to H_*(X_n, k) \to H_*(C_0,k). \]
Since the composite $X_{n+1} \to X_n \to C_0$ is null, we see that the image of $H_*(X_{n+1}, k) \to H_*(X_n, k)$ lies inside the image of $\pi_*E$.
On the other hand (by the existence of $E \to k[X_{n+1}]$) the map also surjects onto that image.
It follows that $\pi_iE \to H_i(X_\bullet, k)$ is a pro-isomorphism for every $i$.
We may hence apply Theorem \ref{thm:bousfield-convergence}(4) to conclude that $k[\lim_i X_i] \wequi \lim_i k[X_i]$, and this limit is computed on underlying $k$-modules.
We thus obtain a map \[ E \to \lim_i k[X_i] \wequi k[\lim_i X_i], \] which is an isomorphism.
This concludes the proof.
\end{proof}

\bibliographystyle{alpha}
\bibliography{bibliography}

\begin{thebibliography}{DGMS75}

\bibitem[BGH18]{barwick2018exodromy}
Clark Barwick, Saul Glasman, and Peter Haine.
\newblock Exodromy.
\newblock 2018.
\newblock \href{https://arxiv.org/abs/1807.03281}{arxiv:1807.03281}.

\bibitem[BH21]{norms}
Tom Bachmann and Marc Hoyois.
\newblock Norms in motivic homotopy theory.
\newblock {\em Ast{\'e}risque}, 425, 2021.
\newblock \href{https://arxiv.org/abs/1711.03061}{arXiv:1711.03061}.

\bibitem[BK87]{bousfield1987homotopy}
Aldridge~Knight Bousfield and Daniel~Marinus Kan.
\newblock {\em Homotopy limits, completions and localizations}, volume 304.
\newblock Springer Science \& Business Media, 1987.

\bibitem[Bou87]{bousfield-homology-spectral-sequence}
A.~K. Bousfield.
\newblock On the homology spectral sequence of a cosimplicial space.
\newblock {\em Amer. J. Math.}, 109(2):361--394, 1987.

\bibitem[Car54]{Cartan-EM}
Henri Cartan.
\newblock Sur les groupes d'{E}ilenberg-{M}ac {L}ane. {II}.
\newblock {\em Proc. Nat. Acad. Sci. U.S.A.}, 40:704--707, 1954.

\bibitem[DGMS75]{DGMS}
Pierre Deligne, Phillip Griffiths, John Morgan, and Dennis Sullivan.
\newblock Real homotopy theory of {K}\"{a}hler manifolds.
\newblock {\em Invent. Math.}, 29(3):245--274, 1975.

\bibitem[Heu21]{Heutsvn}
Gijs Heuts.
\newblock Lie algebras and {$v_n$}-periodic spaces.
\newblock {\em Ann. of Math. (2)}, 193(1):223--301, 2021.

\bibitem[KST23]{MR4637146}
Moritz Kerz, Shuji Saito, and Georg Tamme.
\newblock {$K$}-theory of non-archimedean rings {II}.
\newblock {\em Nagoya Math. J.}, 251:669--685, 2023.

\bibitem[Lur11]{lurie2011derived}
Jacob Lurie.
\newblock Rational and {$p$}-adic homotopy theory.
\newblock 2011.
\newblock \href{http://www.math.ias.edu/~lurie/}{Available online}.

\bibitem[Lur17a]{Elliptic-I}
Jacob Lurie.
\newblock Elliptic {C}ohomology i: {S}pectral {A}belian {V}arieties.
\newblock 2017.
\newblock \href{http://www.math.ias.edu/~lurie/}{{A}vailable online}.

\bibitem[Lur17b]{HA}
Jacob Lurie.
\newblock Higher {A}lgebra.
\newblock 2017.
\newblock \href{http://www.math.ias.edu/~lurie/}{{A}vailable online}.

\bibitem[Lur17c]{HTT}
Jacob Lurie.
\newblock Higher {T}opos {T}heory.
\newblock 2017.
\newblock \href{http://www.math.ias.edu/~lurie/}{{A}vailable online}.

\bibitem[Man01]{mandell-e-infty}
Michael~A. Mandell.
\newblock {$E_\infty$} algebras and {$p$}-adic homotopy theory.
\newblock {\em Topology}, 40(1):43--94, 2001.

\bibitem[May70]{may-algebraic-approach}
J.~Peter May.
\newblock A general algebraic approach to {S}teenrod operations.
\newblock In {\em The {S}teenrod {A}lgebra and its {A}pplications ({P}roc.
  {C}onf. to {C}elebrate {N}. {E}. {S}teenrod's {S}ixtieth {B}irthday,
  {B}attelle {M}emorial {I}nst., {C}olumbus, {O}hio, 1970)}, Lecture Notes in
  Math., Vol. 168, pages 153--231. Springer, Berlin-New York, 1970.

\bibitem[NS18]{NS}
Thomas Nikolaus and Peter Scholze.
\newblock On topological cyclic homology.
\newblock {\em Acta Math.}, 221(2):203--409, 2018.

\bibitem[Qui69]{QuillenRational}
Daniel Quillen.
\newblock Rational homotopy theory.
\newblock {\em Ann. of Math. (2)}, 90:205--295, 1969.

\bibitem[Sta72]{staufer-completion}
H.~B. Stauffer.
\newblock The completion of an abelian category.
\newblock {\em Transactions of the American Mathematical Society},
  170:403--414, 1972.

\bibitem[Yua23]{AllenThesis}
Allen Yuan.
\newblock Integral models for spaces via the higher {F}robenius.
\newblock {\em J. Amer. Math. Soc.}, 36(1):107--175, 2023.

\end{thebibliography}

\end{document}